\newtheorem{thm}{Theorem}[section]
\newtheorem{lem}[thm]{Lemma}
\newtheorem{prop}[thm]{Proposition}
\newtheorem{oprob}[thm]{Open Problem}
\theoremstyle{definition}
\newtheorem{example}[thm]{Example}
\theoremstyle{definition}
\newtheorem{defn}[thm]{Definition}
\theoremstyle{remark}
\newtheorem{rem}[thm]{Remark}
\numberwithin{equation}{section}
\newcommand*\circled[1]{\tikz[baseline=(char.base)]{
            \node[shape=circle,draw,inner sep=0pt,minimum size=5mm] (char) {#1};}}
\newcommand{\rootsE}[7]{\circled{#1}
            \begin{tabular}{ccccc}
             &&#2&& \\
             #3&#4&#5&#6&#7
             \end{tabular}}
\newcommand{\rmnum}[1]{\romannumeral #1}
\newcommand{\Rmnum}[1]{\expandafter\@slowromancap\romannumeral #1@}
\begin{document}

\title{Complete reducibility of subgroups of reductive algebraic groups over nonperfect fields \Rmnum{1}}
\author{Tomohiro Uchiyama\\
Department of Mathematics, University of Auckland, \\
Private Bag 92019, Auckland 1142, New Zealand\\
\texttt{email:tuch540@aucklanduni.ac.nz}}
\date{}
\maketitle 

\begin{abstract}
Let $k$ be a nonperfect field of characteristic $2$. Let $G$ be a $k$-split simple algebraic group of type $E_6$ (or $G_2$) defined over $k$. In this paper, we present the first examples of nonabelian non-$G$-completely reducible $k$-subgroups of $G$ which are $G$-completely reducible over $k$. Our construction is based on that of subgroups of $G$ acting non-separably on the unipotent radical of a proper parabolic subgroup of $G$ in our previous work. We also present examples with the same property for a non-connected reductive group $G$. Along the way, several general results concerning complete reducibility over nonperfect fields are proved using the recently proved Tits center conjecture for spherical buildings. In particular, we show that under mild conditions a $k$-subgroup of $G$ is pseudo-reductive if it is $G$-completely reducible over $k$.   
\end{abstract}

\noindent \textbf{Keywords:} algebraic groups, complete reducibility, separability, spherical buildings
\section{Introduction} 
Let $k$ be an arbitrary field. We write $\bar k$ for an algebraic closure of $k$. Let $G/k$ be a connected reductive algebraic group defined over $k$: we regard $G$ as a ${\bar k}$-defined algebraic group together with a choice of $k$-structure~\cite[AG.11]{Borel-AG-book}. Following Serre~\cite{Serre-building},  define:
\begin{defn}
A closed subgroup $H$ of $G$ is \emph{$G$-completely reducible over $k$} (\emph{$G$-cr over $k$} for short) if whenever $H$ is contained in a $k$-parabolic subgroup $P$ of $G$, $H$ is contained in some $k$-Levi subgroup $L$ of $P$. In particular, if $H$ is not contained in any $k$-parabolic subgroup of $G$, $H$ is \emph{$G$-irreducible over $k$} (\emph{$G$-ir over $k$} for short). Note that we do not require $H$ to be $k$-defined. 
\end{defn}
Our definition is a slight generalization of Serre's original definition in~\cite{Serre-building}, where $H$ is assumed to be $k$-defined. This generalized definition was used in~\cite{Bate-cocharacterbuildings-Arx} and~\cite{Bate-cocharacter-Arx}.

The notion of $G$-complete reducibility over $k$ is a natural generalization of that of complete reducibility in representation theory: if $G=GL(V)$ for some finite dimensional $k$-vector space $V$, a subgroup $H$ of $G$ acts on $V$ semisimply over $k$ if and only if $H$ is $G$-complete reducible over $k$~\cite[Sec.~1.3]{Serre-building}. We say that a subgroup $H$ of $G$ is $G$-cr ($G$-ir) if $H$ is $G$-cr over $\bar k$ ($G$-ir over $\bar k$) regarding $G$ to be defined over $\bar k$. By a subgroup $H$ of $G$, we always mean a closed subgroup of $G$ unless otherwise stated. 

Complete reducible subgroups are much studied, but most studies so far considered complete reducibility over $\bar k$ only; see~\cite{Bate-geometric-Inventione},~\cite{Liebeck-Seitz-memoir},~\cite{Stewart-nonGcr},~\cite{Thomas-irreducible-JOA}. Not much is known about completely reducible subgroups over arbitrary $k$ except for a few results and important examples in~\cite{Bate-cocharacterbuildings-Arx},~\cite{Bate-cocharacter-Arx},~\cite[Sec.~6.5]{Bate-geometric-Inventione},~\cite{Bate-separable-Paris},~\cite[Sec.~7]{Bate-separability-TransAMS},~\cite[Thm.~1.8]{Uchiyama-Classification-pre},~\cite[Sec.~4]{Uchiyama-Separability-JAlgebra}. We write $k_s$ for a separable closure of $k$. The main result of this paper is the following:

\begin{thm}\label{G-cr over k non G-cr}
Let $k=k_s$ be a nonperfect field of characteristic $2$. Let $G/k$ be a simple algebraic group of type $E_6$ (or $G_2$). Then there exists a nonabelian $k$-subgroup $H$ of $G$ such that $H$ is $G$-cr over $k$, but not $G$-cr.
\end{thm}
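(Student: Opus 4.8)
The plan is to realise $H$ as a pseudo-reductive $k$-subgroup of $G$ whose geometric incarnation $H_{\bar k}$ fails to be reductive, and then to read off the two reducibility properties from this asymmetry between $k$ and $\bar k$. First I would fix a $k$-split maximal torus $T$ and a Borel subgroup $B \supseteq T$, choose a cocharacter $\lambda$ of $T$, and let $P = P_\lambda$, $L = L_\lambda = C_G(\lambda)$ and $V = R_u(P)$ be the associated $k$-parabolic, Levi, and unipotent radical. I would take a reductive $k$-subgroup $M \le L$ acting \emph{non-separably} on $V$, as produced in the author's previous work; it is exactly in type $E_6$ and type $G_2$ over a field of characteristic $2$ that such non-separable actions arise. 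Since $k$ is nonperfect of characteristic $2$, fix $t \in k \setminus k^2$, so that $t^{1/2} \in \bar k \setminus k$. Using $t$ and the non-separable datum I would build $H$ as a nonabelian $k$-subgroup of $P$ generated by $M$ together with a one-parameter unipotent family in $V$ whose defining root-group words involve $t$; the construction is arranged so that $R_{u,k}(H) = 1$ (so $H$ is pseudo-reductive over $k$) while the extra infinitesimal directions detected by non-separability force a nontrivial unipotent radical $R_u(H_{\bar k}) \ne 1$ after base change to $\bar k$, where $t^{1/2}$ becomes available. That $H$ is $k$-defined and nonabelian is checked directly from the formulas.

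Granting this, the failure of $G$-complete reducibility over $\bar k$ is immediate: since $G$ is semisimple, every $G$-completely reducible subgroup has reductive identity component (Serre), so a subgroup with $R_u(H_{\bar k}) \ne 1$ cannot be $G$-cr. Equivalently, one may exhibit a cocharacter $\mu$ of $G$ defined only over $k(t^{1/2}) \subseteq \bar k$ with $H \le P_\mu$ and $H$ contained in no Levi subgroup of $P_\mu$, the obstruction being precisely the unipotent part that appears over $\bar k$.

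The substantial part is to prove that $H$ is nevertheless $G$-cr over $k$, and here I would combine the hypothesis $k = k_s$ with the Tits center conjecture. Suppose for contradiction that $H$ is not $G$-cr over $k$. By Serre's building-theoretic reformulation, the subcomplex $\Sigma$ of $k$-parabolic subgroups containing $H$ is then convex and contractible, so the Tits center conjecture furnishes a canonical $k$-parabolic $P_0 \supseteq H$ that is stabilised by every automorphism preserving $\Sigma$, in particular by $N_G(H)(k)$; as $k = k_s$, both $P_0$ and $R_u(P_0)$ are $N_G(H)(k)$-stable and $k$-defined. I would then use that $R_u(P_0)$ is $k$-split and $k = k_s$, so that $H^1(k, R_u(P_0)) = 1$: any \emph{separable} $\bar k$-conjugation of $H$ into the Levi $L_0 = P_0 / R_u(P_0)$ descends to a $k$-point of $R_u(P_0)$, placing $H$ in a $k$-Levi and contradicting the choice of $P_0$. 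The only remaining possibility is that the conjugation is \emph{inseparable}; but inseparability here is exactly the phenomenon encoded by $t^{1/2}$, which cannot be defined over the separably closed field $k$. This contradiction shows that every $k$-parabolic containing $H$ contains a $k$-Levi subgroup through which $H$ factors, i.e. $H$ is $G$-cr over $k$.

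The main obstacle is the last paragraph: converting the abstract center simplex produced by the Tits center conjecture into an honest $k$-Levi containment, and in particular proving that the only obstruction to descent is the inseparable one carried by $t^{1/2}$. This requires precise root-theoretic control of how $H$ meets the unipotent radicals of the various $k$-parabolics of $E_6$ and $G_2$ in characteristic $2$, and a careful separation of the $k$-rational from the purely inseparable part of the relevant orbit map — not a soft building argument. By contrast, the construction of $H$ and the verification that $H_{\bar k}$ acquires a unipotent radical are comparatively routine once the non-separable action from the author's previous work is in hand.
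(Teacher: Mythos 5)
Your proposal leaves the hard half of the theorem unproved, and the mechanism you propose for it cannot work. To show $H$ is $G$-cr over $k$ you suppose it is not, invoke the Tits center conjecture to get a canonical $k$-parabolic $P_0\supseteq H$, and then try to reach a contradiction by arguing that $H^1(k,R_u(P_0))=1$ forces any ``separable'' conjugation of $H$ into a Levi to descend to a $k$-point. But over $k=k_s$ the vanishing of Galois (\'etale) $H^1$ with smooth coefficients is automatic and vacuous: the obstruction to descending a Levi containment over a nonperfect separably closed field is not cohomological at all, it is infinitesimal --- the transporter $\{u\in R_u(P_0)\mid u\cdot H\subseteq L_0\}$ can be nonempty over $\bar k$ yet have no $k$-point because it is a \emph{non-smooth} scheme (this is exactly what non-separability produces). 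Your ``separable versus inseparable'' dichotomy then simply restates what has to be proved, and you concede as much in your final paragraph. The paper does not use the center conjecture here at all (it reserves it for the general results such as Proposition~\ref{Normal} and Proposition~\ref{TCC-centralizer}). Instead it proves the stronger statement that $H$ is $G$-\emph{ir} over $k$, by two concrete steps: (i) explicit root-group and Bruhat-decomposition computations inside $C_G(M)=G_{3\alpha+2\beta}$ (resp.\ $G_{21}$) showing that $v(\sqrt a)\cdot P$ is the \emph{unique} proper parabolic subgroup of $G$ containing $H$; and (ii) the rational Bruhat decomposition, showing that $v(\sqrt a)\cdot P$ is not $k$-defined. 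Hence $H$ lies in no proper $k$-parabolic whatsoever --- note that your framework, which tries to place $H$ inside $k$-Levi subgroups, never even contemplates this vacuous mode of being $G$-cr over $k$, which is the one that actually occurs.

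There is a second, smaller gap in the other direction. You claim the construction ``routinely'' yields $R_u(H_{\bar k})\neq 1$ and deduce non-$G$-cr from reductivity of $G$-cr subgroups. That deduction is valid as a conditional, but nothing in your outline establishes $R_u(H_{\bar k})\neq 1$, and computing the unipotent radical of a group generated by explicit root-group words is not routine; the paper never does it and never claims $H$ is non-reductive. Its route is different: one computes $C_G(H)^{\circ}=v(\sqrt a)\cdot U_{3\alpha+2\beta}$ (resp.\ $v(\sqrt a)\cdot U_{21}$), a nontrivial unipotent group, which is not $G$-cr by Borel--Tits; then \cite[Cor.~3.17]{Bate-geometric-Inventione} (centralizers of $G$-cr subgroups are $G$-cr) forces $H$ itself to be non-$G$-cr. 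So both of your key steps --- the descent argument over $k$ and the identification of a unipotent radical over $\bar k$ --- would need to be replaced by the paper's explicit centralizer and Bruhat computations before this outline becomes a proof.
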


Several examples of an abelian subgroup $H<G$ such that $H$ is $G$-cr over $k$ but not $G$-cr are known; see Example~\ref{PGL-nonplongeable},~\cite{Gille-E8unipotent-QJM},~\cite{Tits-unipotent-Utrecht},~\cite{Tits-Note1-CollegeFrance}. Note that in these examples, $H<G$ is generated by a \emph{$k$-anisotropic unipotent element}~\cite{Tits-unipotent-Utrecht}. 

\begin{defn}
Let $G/k$ be a reductive algebraic group. A unipotent element $u$ of $G$ is \emph{$k$-nonplongeable unipotent} if $u$ is not contained in the $k$-unipotent radical of any $k$-parabolic subgroup of $G$. In particular, if $u$ is not contained in any $k$-parabolic subgroup of $G$, $u$ is \emph{$k$-anisotropic unipotent}. 
\end{defn}

By the $k$-unipotent radical of an affine $k$-group $N$, we mean the maximal connected unipotent normal $k$-subgroup of $N$. It is clear that a subgroup $H$ of $G$ generated by a $k$-anisotropic unipotent element is $G$-ir over $k$. Since $H$ is unipotent, the classical result of Borel-Tits~\cite[Prop.~3.1]{Borel-Tits-unipotent-invent} shows that $H$ is not $G$-cr; see Example~\ref{PGL-nonplongeable}. 

The next result~\cite[Thm.~1.1]{Bate-separable-Paris} shows that the nonperfectness assumption of $k$ in Theorem~\ref{G-cr over k non G-cr} is necessary. Recall that if $k$ is perfect, we have $k_s=\bar k$.
\begin{prop}\label{separable}
Let $k$ be an arbitrary field. Let $G/k$ be a connected reductive algebraic group. Then a subgroup $H$ of $G$ is $G$-cr over $k$ if and only if $H$ is $G$-cr over $k_s$.
\end{prop}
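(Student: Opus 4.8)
The plan is to prove the two implications separately, as they have quite different flavors. The implication that $G$-cr over $k_s$ implies $G$-cr over $k$ is \emph{local}: since every $k$-parabolic is a fortiori a $k_s$-parabolic, it reduces to a descent statement at a single fixed $k$-parabolic $P$. The converse, that $G$-cr over $k$ implies $G$-cr over $k_s$, is \emph{global}, because a $k_s$-parabolic containing $H$ need not be defined over $k$; this is where I would invoke the spherical building and the Tits Center Conjecture advertised in the abstract.

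For the descent direction, assume $H$ is $G$-cr over $k_s$ and let $P$ be a $k$-parabolic with $H \le P$. By hypothesis some $k_s$-Levi of $P$ contains $H$, and I must produce a $k$-Levi containing $H$. First I would fix a $k$-Levi $M$ of $P$ (these exist by Borel--Tits), and recall that $R_u(P)$ acts simply transitively on the Levi subgroups of $P$ via $v \mapsto vMv^{-1}$. A short computation shows that if two Levi subgroups of $P$ both contain $H$, then the unique element of $R_u(P)$ conjugating one to the other lies in $C_{R_u(P)}(H)$; hence the $k$-defined variety $Z = \{v \in R_u(P) : v^{-1}Hv \le M\}$ is a torsor under $C := C_{R_u(P)}(H)$, nonempty over $k_s$. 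Equivalently, the Galois cocycle $\gamma \mapsto u^{-1}\gamma(u)$ attached to the chosen $k_s$-Levi $uMu^{-1}$ takes values in $C(k_s)$, and its triviality is exactly what delivers a $k$-point of $Z$. The engine is that $R_u(P)$ is $k$-split unipotent: filtering it by $\mathbb{G}_a$-layers and applying additive Hilbert 90 (that $H^1(k,\mathbb{G}_a)=0$) kills the obstruction layer by layer.

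For the global direction I would argue by contraposition. Suppose $H$ is not $G$-cr over $k_s$. By Serre's criterion the fixed-point subcomplex $\Sigma := \Delta_{k_s}(G)^H$ of the building of $k_s$-parabolics is not completely reducible, hence contractible, so the Tits Center Conjecture furnishes a center simplex $\sigma$ fixed by every automorphism of $\Sigma$. Since $H$ is $k$-defined, the Galois group $\Gamma=\mathrm{Gal}(k_s/k)$ preserves $\Sigma$ and therefore fixes $\sigma$; the corresponding $k_s$-parabolic $P$ thus satisfies $\gamma(P)=P$ for all $\gamma$, and a $\Gamma$-fixed $k_s$-point of the projective variety of parabolics of a fixed type is a $k$-point, so $P$ is a $k$-parabolic. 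Because $\sigma$ is the center of a contractible complex it has no opposite in $\Sigma$, so no $k_s$-Levi, and a fortiori no $k$-Levi, of $P$ contains $H$. Hence $H$ lies in the $k$-parabolic $P$ but in none of its $k$-Levis, contradicting $G$-cr over $k$.

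I expect the global direction to be the main obstacle: there is no elementary way to manufacture a $k$-rational destabilizing parabolic out of a merely $k_s$-rational one, and the only mechanism producing a canonical --- hence Galois-stable --- parabolic is the fixed center from the Center Conjecture; one must also check that Serre's contractibility criterion applies verbatim over the separably closed field $k_s$ and that $\Gamma$ acts by admissible automorphisms of $\Sigma$. The descent direction is conceptually routine but hides one genuine subtlety over the imperfect fields of this paper: $C_{R_u(P)}(H)$ may fail to be smooth or $k$-split, so the cohomological vanishing must be read off from the split layers $\mathbb{G}_a$ of $R_u(P)$ rather than from $C$ itself.
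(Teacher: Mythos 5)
Your decomposition is inverted relative to how this result is actually proved, and the half you call ``conceptually routine'' is where the real difficulty sits. Note first that the paper does not prove Proposition~\ref{separable} at all: it quotes it from \cite[Thm.~1.1]{Bate-separable-Paris}, and the text immediately following the statement says explicitly that it is the \emph{reverse} implication ($G$-cr over $k_s$ $\Rightarrow$ $G$-cr over $k$) that depends on the Tits center conjecture, the forward implication being the one that survives for non-connected $G$. Your proposal assigns TCC to the forward implication and handles the reverse one by Galois cohomology, and that cohomological argument has a genuine gap. Your cocycle $\gamma\mapsto u^{-1}\gamma(u)$ takes values in $C(k_s)$ with $C=C_{R_u(P)}(H)$, so the obstruction lives in $H^1(\Gamma,C(k_s))$, and vanishing of $H^1$ for the ambient split group $R_u(P)$ is irrelevant: if the class dies in $R_u(P)(k_s)$ you get a $\Gamma$-fixed point of $R_u(P)$, not of the $C$-torsor $Z$. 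Likewise, intersecting $C$ with a $\mathbb{G}_a$-filtration of $R_u(P)$ only produces layers that are proper $\Gamma$-submodules of $(k_s,+)$, and additive Hilbert 90 does not apply to those: for instance $H^1(\Gamma,\mathbb{F}_p)\cong k/\wp(k)$, $\wp(x)=x^p-x$, which is nonzero for general $k$ of characteristic $p$. Over the nonperfect fields that are the whole point here, $C$ can be non-smooth (non-separable actions are the central theme of this paper) or wound or disconnected, and such unipotent groups genuinely have nonvanishing $H^1$. This obstruction is precisely why the center conjecture is invoked for this direction in \cite{Bate-separable-Paris}; note also that if your layer-by-layer argument were valid it would apply verbatim to $R$-parabolics and $R$-Levis of non-connected $G$, settling a question the paper explicitly flags as out of reach of the TCC method.

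Your TCC argument for the forward implication is much closer to a correct proof, but it has two holes. First, the proposition (per the paper's Definition~1.1) does not assume $H$ is $k$-defined, and without that assumption $\Gamma$ need not stabilize $\Sigma=\Delta_{k_s}(G)^H$ at all, so the Galois-fixed center argument does not even start. Second, the claim that the center ``has no opposite in $\Sigma$'' is not part of Theorem~\ref{TCC} as stated: a contractible convex subcomplex can perfectly well contain pairs of mutually opposite simplices (non-complete-reducibility only says \emph{some} simplex lacks an opposite), so you must either invoke the refined form of the center conjecture established in the proofs, or use the standard workaround: pass to a minimal $k$-parabolic containing $H$ and a $k$-Levi $L$ of it, apply Lemma~\ref{G-cr-L-cr} over both $k$ and $k_s$ to reduce to the case where $H$ is $G$-ir over $k$, and then \emph{any} $\Gamma$-fixed simplex of $\Sigma$ --- which TCC plus Galois descent for parabolic subgroups does provide --- is already a contradiction, with no statement about opposites required.
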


The forward direction of Proposition~\ref{separable} holds for a non-connected reductive group $G$ in an appropriate sense (see Definition~\ref{non-connected-G-cr}). The reverse direction depends on the \emph{Tits center conjecture}  (Theorem~\ref{TCC}), but this method does not work for non-connected $G$; see~\cite{Uchiyama-Nonperfectopenproblem-pre}. 

In Section 3, we present an example of a subgroup $H$ for $G=E_6$ (or $G_2$) satisfying the properties of Theorem~\ref{G-cr over k non G-cr}. The key to our construction is the notion of a \emph{non-separable action}~\cite[Def.~1.5]{Uchiyama-Separability-JAlgebra}.

\begin{defn}
Let $H$ and $N$ be affine algebraic groups. Suppose that $H$ acts on $N$ by group automorphisms. The action of $H$ is called \emph{separable} in $N$ if $\text{Lie}\;C_N(H)=\mathfrak{c}_{\textup{Lie} N} (H)$. Note that the condition means that the scheme-theoretic centralizer of $H$ in $N$ (in the sense of~\cite[Def.~A.1.9]{Conrad-pred-book}) is smooth. 
\end{defn}

Note that the notion of a separable action is a slight generalization of that of a separable subgroup~\cite[Def.~1.1]{Bate-separability-TransAMS}. See~\cite{Bate-separability-TransAMS} and~\cite{Herpel-smoothcentralizerl-trans} for more on separability. It is known that if the characteristic $p$ of $k$ is very good for $G$, every subgroup of $G$ is separable~\cite[Thm.~1.2]{Bate-separability-TransAMS}. This suggests that we need to work in small $p$. Proper non-separable subgroups are hard to find. Only a handful of such examples are known~\cite[Sec.~7]{Bate-separability-TransAMS},~\cite{Uchiyama-Classification-pre},\cite{Uchiyama-Separability-JAlgebra}. 

\begin{rem}
The examples of subgroups $H$ of $G$ in Section~3 are $G$-ir over $k$ but not $G$-cr. So, we can regard these examples as a generalization of $k$-anisotropic unipotent elements. 
\end{rem}

Next, we consider a non-connected case. Again, non-separability is the key to our construction, but the computations are much simpler than in the connected cases.
\begin{thm}\label{non-connected G-cr over k non-G-cr}
Let $k=k_s$ be a nonperfect field of characteristic $2$. Let $\tilde G/k$ be a simple algebraic group of type $A_4$. Let $G:=\tilde G\rtimes \langle \sigma \rangle$ where $\sigma$ is the non-trivial graph automorphism of $\tilde G$. Then there exists a nonabelian $k$-subgroup $H$ of $G$ such that $H$ is $G$-cr over $k$, but not $G$-cr.
\end{thm}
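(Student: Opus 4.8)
The plan is to transport the non-separability engine of Theorem~\ref{G-cr over k non G-cr} into the non-connected setting, where the graph automorphism $\sigma$ makes the relevant computation transparent. Fix a $\sigma$-stable $k$-split maximal torus $T$ and a $\sigma$-stable $k$-defined Borel $B \supseteq T$ of $\tilde G$, and label the simple roots so that $\sigma$ interchanges $\alpha_1 \leftrightarrow \alpha_4$ and $\alpha_2 \leftrightarrow \alpha_3$. Two features of characteristic $2$ drive everything. First, since $\langle\sigma\rangle$ has order $2 = \operatorname{char} k$, the coset representative $\sigma$ is a unipotent element of $G$. Second, the fixed-point group scheme $C_{\tilde G}(\sigma)$ is non-smooth: the folding $A_4 \rightsquigarrow B_2 = C_2$ is inseparable in characteristic $2$, so $\operatorname{Lie} C_{\tilde G}(\sigma) \subsetneq \mathfrak{c}_{\operatorname{Lie}\tilde G}(\sigma)$ and the action of $\langle\sigma\rangle$ on $\tilde G$ (and on the unipotent radicals below) is non-separable. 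This built-in non-separability is what replaces the delicate constructions needed in the $E_6$ and $G_2$ cases.

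Construction of $H$. Let $a \in k \setminus k^2$, which exists because $k$ is nonperfect. Inside the rank-two subsystem spanned by $\alpha_2, \alpha_3$ I would take a unipotent element $u$ supported on $U_{\alpha_2}, U_{\alpha_3}, U_{\alpha_2+\alpha_3}$ whose coefficients involve $a$, and set $H = \langle u, \sigma\rangle$. Because $\sigma$ sends $U_{\alpha_2}$ to $U_{\alpha_3}$ and these root groups do not commute, $H$ is a nonabelian $k$-subgroup of $G$; it is moreover unipotent, being solvable and generated by unipotent elements. The coefficients are chosen so that $H$ acts non-separably on the unipotent radical of the $\sigma$-fixed $k$-parabolic determined by $\alpha_2, \alpha_3$, exactly in the sense of the non-separable action recalled in the excerpt.

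First I would prove that $H$ is $G$-cr over $k$. The point is to check the defining condition of Definition~\ref{non-connected-G-cr} for every proper $k$-parabolic $P_\mu$ containing $H$. Since $\sigma \in H$ and $\sigma \in L_\mu = C_G(\mu)$ forces $\mu$ to be $\sigma$-fixed, the cocharacters $\mu$ that can occur lie in the rank-two $\sigma$-fixed sublattice $X_*(T)^\sigma$; this is precisely why the bookkeeping is far shorter here than in the connected cases, as only finitely many folded parabolics need be inspected. For each such $P_\mu$ I would exhibit an element of $R_u(P_\mu)(k)$ conjugating $H$ into the $k$-Levi $L_\mu$, the relevant straightening equation being linear over $k$. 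Consequently $H$ lies in a $k$-Levi of every $k$-parabolic that contains it, so $H$ is $G$-cr over $k$.

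It remains to show that $H$ is not $G$-cr, and this is where the non-separability is exploited. Over $\bar k$ I would use the square root $\sqrt a \in \bar k \setminus k$ to produce a cocharacter $\lambda \in X_*(T)$ together with a conjugator in $R_u(P_\lambda)(\bar k)$ realizing a $\bar k$-parabolic $P \supseteq H$ for which no $\bar k$-Levi contains $H$: the obstruction to straightening $H$ into a Levi is governed by the non-smooth centralizer $C_{R_u(P)}(H)$, so the equation that would move $H$ off the radical direction has no solution, even over $\bar k$. Hence $H$ is contained in a proper parabolic but in no Levi, so $H$ is not $G$-cr, in the spirit of the $k$-anisotropic unipotent elements of Example~\ref{PGL-nonplongeable}. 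I expect the main obstacle to be the $G$-cr over $k$ direction: one must verify that the non-separable obstruction detected over $\bar k$ is purely inseparable, so that it simply does not arise for the $\sigma$-fixed $k$-parabolics, and one must carry out the non-connected parabolic–Levi bookkeeping of Definition~\ref{non-connected-G-cr} directly rather than quoting the connected Borel–Tits theorem, which does not apply here since $\sigma \notin G^\circ$.
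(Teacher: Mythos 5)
There is a genuine, fatal gap: your construction cannot satisfy the ``$G$-cr over $k$'' half of the theorem, no matter how the coefficients involving $a$ are chosen. Your $H=\langle u,\sigma\rangle$ sits inside $U_{\alpha_2}U_{\alpha_3}U_{\alpha_2+\alpha_3}\rtimes\langle\sigma\rangle$, hence inside $P_\lambda=B\rtimes\langle\sigma\rangle$, where $\lambda$ is any regular dominant $\sigma$-fixed $k$-cocharacter of $T$; this $P_\lambda$ is a proper $k$-defined $R$-parabolic subgroup of $G$. Every ($k$-defined) $R$-Levi subgroup of $P_\lambda$ is an $R_u(P_\lambda)$-conjugate of $L_\lambda=T\rtimes\langle\sigma\rangle$. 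Now every element of $U\rtimes\langle\sigma\rangle$ is unipotent in characteristic $2$, and any unipotent subgroup $V$ of a conjugate of $T\rtimes\langle\sigma\rangle$ satisfies $V\cap T=\{1\}$ (unipotent meets torus trivially), so $V$ embeds into $\langle\sigma\rangle$ and has order at most $2$; in particular $V$ is abelian. Since your $H$ is nonabelian and consists of unipotent elements, $H$ lies in no $R$-Levi subgroup of $P_\lambda$ whatsoever, over $k$ or over $\bar k$. So $H$ is contained in a proper $k$-defined $R$-parabolic but in no $k$-defined $R$-Levi of it: $H$ is \emph{not} $G$-cr over $k$ (and also not $G$-cr), which is the opposite of what you need. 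Your plan to ``exhibit an element of $R_u(P_\mu)(k)$ conjugating $H$ into the $k$-Levi'' must fail already for $P_\mu=B\rtimes\langle\sigma\rangle$, and this failure is structural, not a matter of inseparability of the straightening equation.

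The paper avoids this trap by making $H$ decidedly non-unipotent and by proving $G$-complete reducibility over $k$ \emph{vacuously}, via $G$-irreducibility over $k$. Concretely, with $\lambda=(\alpha+\beta+\gamma+\delta)^{\vee}$, $K=\langle\sigma\rangle$, $M=\langle K,G_{\beta+\gamma}\rangle\leq L_\lambda$ and $v(\sqrt a)=\epsilon_{-\alpha-\beta}(\sqrt a)\epsilon_{-\gamma-\delta}(\sqrt a)$, the paper sets $H=\langle v(\sqrt a)\cdot M,\ \epsilon_{\beta+\gamma+\delta}(1)\rangle$; this $H$ contains the conjugate $v(\sqrt a)\cdot G_{\beta+\gamma}$ of $SL_2$, so it is nonabelian but reductive-flavoured rather than unipotent (non-separability of the $\langle\sigma\rangle$-action on $R_u(P_\lambda^-)$ enters only to make $v(\sqrt a)\cdot M$ $k$-defined although $v(\sqrt a)$ is not a $k$-point). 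One then shows, via $C_G(M)=G_{\alpha+\beta+\gamma+\delta}$ and a Bruhat-decomposition argument in that $SL_2$, that $v(\sqrt a)\cdot P_\lambda$ is the \emph{unique} proper $R$-parabolic subgroup containing $H$, and that it is not $k$-defined (rational Bruhat decomposition); hence $H$ is $G$-ir over $k$, so $G$-cr over $k$ for trivial reasons. Failure of $G$-complete reducibility over $\bar k$ is then deduced not from a non-smooth straightening obstruction, as you propose, but from the fact that $C_G(H)^{\circ}=v(\sqrt a)\cdot U_{\alpha+\beta+\gamma+\delta}$ is unipotent, combined with Borel--Tits and the centralizer criteria for (non-connected) $G$-cr subgroups. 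Finally, note that the paper's Theorem 6.2 shows that the kind of subgroup you are aiming at --- one generated by $\sigma$ twisted by unipotent elements with coefficients in $a$, e.g.\ $\langle\sigma\epsilon_{\alpha+\beta}(a)\rangle$ in type $A_2$ --- exhibits exactly the \emph{reverse} phenomenon: it is $G$-cr but not $G$-cr over $k$. So the mechanism you chose naturally produces counterexamples in the wrong direction.
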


In Section 2 we extend several existing results concerning complete reducibility over $\bar k$ to a nonperfect $k$. Most arguments are based on~\cite{Bate-geometric-Inventione} and the Tits center conjecture (Theorem~\ref{TCC}) in spherical buildings. We also consider the relationship between complete reducibility over $k$ and \emph{pseudo-reductivity}~\cite{Conrad-pred-book}. Recall:
\begin{defn}
Let $k$ be a field. Let $G/k$ be a smooth connected algebraic group. If the $k$-unipotent radical $R_{u,k}(G)$ of $G$ is trivial, $G$ is called \emph{pseudo-reductive}.
\end{defn}
Note that if $k$ is perfect, pseudo-reductive groups are reductive. Our main result on pseudo-reductivity is the following:

\begin{thm}\label{main}
Let $k=k_s$ be a field. Let $G/k$ be a semisimple simply connected algebraic group. Assume that $[k:k^p]\leq p$. If a $k$-subgroup $H$ of $G$ is $G$-cr over $k$, then $H$ is pseudo-reductive.
\end{thm}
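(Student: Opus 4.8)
The plan is to argue by contradiction: suppose $H$ is $G$-cr over $k$ but not pseudo-reductive, so that the $k$-unipotent radical $R_{u,k}(H)$ is a nontrivial smooth connected unipotent normal $k$-subgroup of $H$. The strategy is to show that such a nontrivial $k$-split unipotent normal subgroup forces $H$ into a proper $k$-parabolic subgroup $P$ of $G$ from which it cannot be conjugated into a $k$-Levi subgroup, contradicting $G$-complete reducibility over $k$. First I would observe that since $k=k_s$ is separably closed and $[k:k^p]\le p$, the standard structure theory of pseudo-reductive groups of Conrad--Gabber--Prasad applies in its simplest form; in particular, over such a field every smooth connected unipotent $k$-group has a nontrivial $k$-split quotient, and more usefully, a nontrivial smooth connected unipotent normal $k$-subgroup $U:=R_{u,k}(H)$ contains a nontrivial $k$-split smooth connected normal (in $H$) subgroup. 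The point of the hypothesis $[k:k^p]\le p$ is precisely to keep the unipotent radical tractable and ensure $k$-split pieces appear.

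The core step is to invoke the Borel--Tits construction over $k$: given the nontrivial smooth connected unipotent $k$-subgroup $U\trianglelefteq H$, Borel--Tits produces a proper $k$-parabolic subgroup $P=P_\lambda$ of $G$ with $U\subseteq R_{u,k}(P)$ and, crucially, $N_G(U)\subseteq P$, so that $H\subseteq N_G(U)\subseteq P$ as $U$ is normal in $H$. Since $G$ is semisimple simply connected and $U\ne 1$, this $P$ is proper. Now I would use $G$-complete reducibility over $k$: $H\subseteq P$ implies $H$ lies in some $k$-Levi subgroup $L=L_\lambda$ of $P$. The contradiction should come from comparing the two positions of $U$: on one hand $U\subseteq R_{u,k}(P)$, on the other hand $U\subseteq H\subseteq L$ and $L\cap R_{u,k}(P)=1$, forcing $U=1$. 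This is the heart of the argument and mirrors the classical proof that over $\bar k$ a $G$-cr subgroup is reductive.

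The main obstacle I anticipate is justifying the Borel--Tits step over a nonperfect $k$ with the \emph{$k$-rational} parabolic and the normalizer containment $N_G(U)\subseteq P(k\text{-defined})$. The classical Borel--Tits theorem~\cite[Prop.~3.1]{Borel-Tits-unipotent-invent} guarantees that a nontrivial smooth connected unipotent $k$-subgroup lies in the $k$-unipotent radical of a proper $k$-parabolic with the normalizer contained in that parabolic, but one must verify the hypotheses are met here, namely that $U$ is smooth, connected, and genuinely $k$-defined (which is why I take $U=R_{u,k}(H)$, defined over $k$ by construction, rather than an ad hoc subgroup). A secondary subtlety is ensuring $P$ is \emph{proper}: this uses that $R_{u,k}(G)=1$ (as $G$ is reductive, indeed semisimple simply connected), so a $U\ne 1$ cannot be normalized by all of $G$. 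Assembling these, the contradiction $U=1$ completes the proof, showing $R_{u,k}(H)=1$ and hence $H$ is pseudo-reductive.
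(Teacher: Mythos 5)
There is a genuine gap at the core step of your argument. Your appeal to the classical Borel--Tits theorem \cite[Prop.~3.1]{Borel-Tits-unipotent-invent} for the smooth connected unipotent $k$-subgroup $U=R_{u,k}(H)$ is invalid over a nonperfect field: that result is a statement over perfect fields (equivalently, for $k$-split unipotent subgroups), and the paper itself records its failure over nonperfect $k$ in Example~\ref{PGL-nonplongeable}. Your fallback claim --- that $[k:k^p]\leq p$ forces $R_{u,k}(H)$ to contain a nontrivial $k$-split smooth connected subgroup normal in $H$ --- is also false: $k$-wound unipotent groups exist over every nonperfect field, in particular over $k=\mathbb{F}_2(t)$, which satisfies $[k:k^2]=2$. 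Concretely, Proposition~\ref{pseudoreductive} exhibits a smooth connected unipotent $k$-subgroup of $PGL_2$ (a $k$-wound form of $\mathbb{G}_a$) that is contained in \emph{no} proper $k$-parabolic subgroup at all, so neither of your two mechanisms for placing $R_{u,k}(H)$ inside $R_{u,k}(P)$ can work in general. A sanity check makes the gap visible: your argument uses simple connectedness only to ensure $P$ is proper, which holds for any semisimple $G$; if the proof were correct it would therefore apply verbatim to $G=PGL_2$ over $\mathbb{F}_2(t)$ and show that every subgroup that is $G$-cr over $k$ is pseudo-reductive, contradicting Proposition~\ref{pseudoreductive}, where the wound subgroup is even $G$-ir over $k$.

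The hypotheses ``simply connected'' and $[k:k^p]\leq p$ are present precisely to invoke the correct substitute for Borel--Tits, namely Gille's theorem (Proposition~\ref{plongeable}, conjectured by Tits): every unipotent subgroup of $G(k)$ is $k$-plongeable. This is the deep input of the paper's proof, and it is applied to the abstract group of rational points $R_{u,k}(H)(k)\leq G(k)$, producing a $k$-parabolic $P$ with $R_{u,k}(H)(k)\leq R_u(P)$. Since this gives no analogue of your normalizer containment $N_G(U)\leq P$, one cannot then run your ``move $H$ into a $k$-Levi'' step; the paper instead applies Proposition~\ref{Normal} (a normal subgroup of a subgroup that is $G$-cr over $k$ is again $G$-cr over $k$, proved via the Tits center conjecture) to conclude that $R_{u,k}(H)(k)$ is itself $G$-cr over $k$, hence lies in a $k$-Levi of $P$, hence is trivial; density of $k$-points ($k=k_s$ and $R_{u,k}(H)$ smooth) then yields $R_{u,k}(H)=1$. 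What can be salvaged from your route is only the easy half: Borel--Tits does apply to the maximal $k$-split smooth connected normal subgroup of $R_{u,k}(H)$ and kills it, but it says nothing about the wound part, and the wound part is the whole difficulty.
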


Let $G/k$ be connected reductive. A standard argument~\cite[Ex.~3.2.2(a)]{Serre-building} (which depends on~\cite[Prop.~3.1]{Borel-Tits-unipotent-invent}) shows that a $G$-cr subgroup of $G$ is reductive, hence pseudo-reductive. However when $k$ is nonperfect we have:
\begin{prop}\label{pseudoreductive}
Let $k$ be a nonperfect field of characteristic $2$. Let $G=PGL_2$. Then there exists a $k$-subgroup $H$ of $G$ such that $H$ is $G$-cr over $k$, but not pseudo-reductive.
\end{prop}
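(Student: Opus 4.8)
The plan is to exploit the fact that $G=PGL_2$ has $k$-rank one, so its only proper $k$-parabolic subgroups are the $k$-Borel subgroups, and these are precisely the stabilizers of the $k$-rational points of $\mathbb{P}^1$; moreover the $k$-Levi subgroups of a $k$-Borel are the maximal $k$-tori it contains. Consequently, to produce $H$ it suffices to exhibit a \emph{connected unipotent} $k$-subgroup $U$ of $G$ that is contained in no $k$-Borel subgroup, i.e.\ $U$ is $G$-ir over $k$. Indeed, a subgroup that is $G$-ir over $k$ is vacuously $G$-cr over $k$ (the only $k$-parabolic containing it is $G$ itself), while a nontrivial connected unipotent $k$-group $U$ satisfies $R_{u,k}(U)=U\neq 1$ and is therefore not pseudo-reductive. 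Thus I would set $H:=U$.

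First I would construct such a $U$ using the nonperfectness of $k$. Fix $a\in k\setminus k^2$ and let $k'=k(\sqrt a)$, a purely inseparable extension of degree $2$. Viewing $k'$ as a $2$-dimensional $k$-vector space and letting $(k')^{\times}$ act on it by multiplication gives a closed $k$-embedding $R_{k'/k}(\mathbb{G}_m)\hookrightarrow GL(k')=GL_2$; concretely, $p+r\sqrt a$ corresponds to the matrix $[\begin{smallmatrix} p & ar\\ r & p\end{smallmatrix}]$. Composing with $GL_2\to PGL_2$ kills exactly the scalars $k^{\times}=\mathbb{G}_m$, so I obtain a $k$-subgroup
\[
 U:=R_{k'/k}(\mathbb{G}_m)/\mathbb{G}_m=\bigl\{\,[\begin{smallmatrix} p & ar\\ r & p\end{smallmatrix}] : (p,r)\neq(0,0)\,\bigr\}\ <\ PGL_2 ,
\]
which is smooth, connected and $1$-dimensional. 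Over $\bar k$ one has $k'\otimes_k\bar k\cong \bar k[\epsilon]/(\epsilon^2)$, whence $R_{k'/k}(\mathbb{G}_m)_{\bar k}\cong\mathbb{G}_m\times\mathbb{G}_a$ and $U_{\bar k}\cong\mathbb{G}_a$. In particular $U$ is a connected unipotent $k$-group, so $R_{u,k}(U)=U\neq 1$ and $H=U$ is not pseudo-reductive, as required.

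It remains to check that $U$ is $G$-ir over $k$. I would compute the fixed locus of $U$ on $\mathbb{P}^1$: the matrix $[\begin{smallmatrix} p & ar\\ r & p\end{smallmatrix}]$ has characteristic polynomial $\lambda^2+(p^2+ar^2)=(\lambda+p+r\sqrt a)^2$ in characteristic $2$, with the single geometric eigenvector $[\sqrt a:1]$; hence every nonidentity element of $U$ fixes exactly the point $[\sqrt a:1]$, which is a closed point of $\mathbb{P}^1_k$ with purely inseparable residue field $k(\sqrt a)$ and is therefore not $k$-rational. If some $k$-Borel $B$ contained $U$, then $B$ would stabilize a point of $\mathbb{P}^1(k)$, which $U$ would then also fix, contradicting the uniqueness of the fixed point $[\sqrt a:1]\notin\mathbb{P}^1(k)$. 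Thus $U$ lies in no $k$-Borel, i.e.\ $U$ is $G$-ir over $k$, completing the argument. The one point requiring genuine care — the main obstacle — is the inseparable descent: one must verify that $U$ really is defined over $k$ (not merely over $\bar k$) and is unipotent as a $k$-group, and that its geometric fixed point is exactly the degree-two purely inseparable point. All of this is made transparent by the explicit $k$-rational parametrization $[\begin{smallmatrix} p & ar\\ r & p\end{smallmatrix}]$ coming from the Weil restriction, which is precisely why phrasing the construction through $R_{k'/k}(\mathbb{G}_m)$ is convenient.
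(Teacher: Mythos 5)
Your proposal is correct and takes essentially the same approach as the paper: your $U$ is exactly the paper's subgroup $H=\left\{\overline{\left[\begin{smallmatrix} x & ay \\ y & x \end{smallmatrix}\right]} \mid x,y\in\bar k\right\}$, and both proofs combine the observation that this is a connected unipotent $k$-group (hence not pseudo-reductive) with the fact that it is $G$-ir over $k$ because it stabilizes no $k$-rational point of $\mathbb{P}^1$ (the paper phrases this by noting that $H$ contains the $k$-anisotropic unipotent element $\overline{\left[\begin{smallmatrix} 0 & a \\ 1 & 0 \end{smallmatrix}\right]}$). Your Weil-restriction description $R_{k(\sqrt a)/k}(\mathbb{G}_m)/\mathbb{G}_m$ and the explicit fixed-point computation are simply a more conceptual packaging of the same group and the same argument.
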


We extend~\cite[Lem.~2.12]{Bate-geometric-Inventione} using the notion of a \emph{central isogeny}. Recall~\cite[Sec.~22.3]{Borel-AG-book}:
\begin{defn}
Let $k$ be a field. Let $G_1/k$ and $G_2/k$ be connected reductive. A $k$-isogeny $f:G_1\rightarrow G_2$ is \emph{central} if 
$\textup{ker}\;df_1$ is central in $\mathfrak{g_1}$ where $df_1$ is the differential of $f$ at the identity of $G_1$.
\end{defn} 

\begin{prop}\label{isogeny}
Let $k$ be a field. Let $G_1/k$ and $G_2/k$ be connected reductive. Let $H_1$ and $H_2$ be (not necessarily $k$-defined) subgroups of $G_1$ and $G_2$ respectively. Let $f:G_1\rightarrow G_2$ be a central $k$-isogeny. 
\begin{enumerate}
\item If $H_1$ is $G_1$-cr over $k$, then $f(H_1)$ is $G_2$-cr over $k$.
\item If $H_2$ is $G_2$-cr over $k$, then $f^{-1}(H_2)$ is $G_1$-cr over $k$.
\end{enumerate}
\end{prop}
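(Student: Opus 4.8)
The plan is to reduce the statement to a $k$-rational version of the correspondence between parabolic (and Levi) subgroups induced by a central isogeny, the analogue over $\bar k$ being \cite[Lem.~2.12]{Bate-geometric-Inventione}. First I would record the two facts that make a central isogeny well behaved. Since $f$ is central, its kernel lies (scheme-theoretically) in the centre of $G_1$, hence in every maximal $k$-torus, and therefore in every $k$-parabolic subgroup and in every $k$-Levi subgroup of $G_1$. Consequently $f^{-1}(f(P_1))=P_1$ for every parabolic $P_1$, while surjectivity of $f$ gives $f(f^{-1}(P_2))=P_2$ for every parabolic $P_2$. Because $f$ is a $k$-morphism, the image $f(P_1)$ of a $k$-parabolic is a $k$-parabolic and the preimage $f^{-1}(P_2)$ of a $k$-parabolic is a $k$-parabolic. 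Thus $P_1\mapsto f(P_1)$ and $P_2\mapsto f^{-1}(P_2)$ are mutually inverse, inclusion-preserving bijections between the $k$-parabolic subgroups of $G_1$ and those of $G_2$.

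Next I would upgrade this to Levi subgroups. An isogeny carries unipotent radicals to unipotent radicals, so $f(R_u(P_1))=R_u(f(P_1))$; combined with $\ker f\subseteq L_1$ for any $k$-Levi $L_1$ of $P_1$, this yields that $f(L_1)$ is a $k$-Levi of $f(P_1)$ and that every $k$-Levi of $f(P_1)$ arises as $f(L_1)$ for a unique $k$-Levi $L_1$ of $P_1$, with $f^{-1}(f(L_1))=L_1$. In other words, $f$ and $f^{-1}$ restrict to bijections between the $k$-Levi subgroups of corresponding $k$-parabolics. Note that none of this uses $k$-definedness of $H_1$ or $H_2$: the images and preimages of arbitrary closed subgroups are closed subgroups, which is all that the argument requires.

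With the dictionary in hand, both parts are a short diagram chase. For (1), let $P_2$ be a $k$-parabolic of $G_2$ containing $f(H_1)$. Then $P_1:=f^{-1}(P_2)$ is a $k$-parabolic of $G_1$ containing $H_1$, so by $G_1$-complete reducibility over $k$ there is a $k$-Levi $L_1$ of $P_1$ with $H_1\subseteq L_1$; applying $f$ gives $f(H_1)\subseteq f(L_1)$ with $f(L_1)$ a $k$-Levi of $f(P_1)=P_2$. As $P_2$ was arbitrary, $f(H_1)$ is $G_2$-cr over $k$. For (2), let $P_1$ be a $k$-parabolic of $G_1$ containing $f^{-1}(H_2)$; then $H_2=f(f^{-1}(H_2))\subseteq f(P_1)$, a $k$-parabolic of $G_2$, so $G_2$-complete reducibility over $k$ provides a $k$-Levi $L_2$ of $f(P_1)$ with $H_2\subseteq L_2$, whence $f^{-1}(H_2)\subseteq f^{-1}(L_2)$ and $f^{-1}(L_2)$ is a $k$-Levi of $P_1$. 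Since $P_1$ was arbitrary, $f^{-1}(H_2)$ is $G_1$-cr over $k$.

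The main obstacle is the second paragraph: verifying that the parabolic/Levi dictionary is genuinely defined over $k$ and that it matches Levi factors exactly. Over $\bar k$ this is standard, but here I must ensure that passing through $f$ and $f^{-1}$ preserves the $k$-structure (the image of a $k$-subgroup under a $k$-homomorphism is a $k$-subgroup, and likewise for preimages) and that centrality of the isogeny is precisely what forces $\ker f$ into every Levi, so that $f^{-1}$ of a Levi is a single Levi rather than a strictly larger subgroup. Without centrality the Levi correspondence can break down, which is why the hypothesis cannot be dropped.
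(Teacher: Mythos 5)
Your overall strategy --- set up a $k$-rational dictionary between the parabolics and Levis of $G_1$ and $G_2$ and then chase --- is the same as the paper's, but there is a genuine gap in how you justify the dictionary, and it sits exactly where the difficulty of the proposition lies. You claim that the preimage of a $k$-defined subgroup under a $k$-homomorphism is again $k$-defined (``the image of a $k$-subgroup under a $k$-homomorphism is a $k$-subgroup, and likewise for preimages''). Over a nonperfect field the preimage half of this is false: preimages are $k$-closed but need not be defined over $k$. The paper's own Example~\ref{counterexample} makes the failure concrete and shows that your argument, as written, would prove a false statement: take $f$ to be the Frobenius isogeny of $PGL_2$. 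Its set-theoretic kernel is trivial, hence central and contained in every torus, every parabolic and every Levi; $f^{-1}(f(P_1))=P_1$ and $f(f^{-1}(P_2))=P_2$ on points; unipotent radicals correspond. So every group-theoretic ingredient of your second paragraph holds for Frobenius, yet conclusion (1) fails for it: $H_1=\langle h_1\rangle$ is $G_1$-ir over $k$ while $f(H_1)$ is not $G_2$-cr over $k$. What breaks is precisely the step you waved through: the preimage under Frobenius of the $k$-defined Borel subgroup stabilizing $[c:1]$, $c\in k\setminus k^p$, is the Borel subgroup stabilizing $[c^{1/p}:1]$, which is not $k$-defined.

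This also shows that you have misplaced the role of centrality. You invoke it only to put $\ker f$ (the reduced kernel) inside every Levi; but the reduced kernel of \emph{any} isogeny of connected groups is automatically central (a finite normal subgroup of a connected group is central), so that argument cannot distinguish a central isogeny from Frobenius. Centrality is the infinitesimal condition that $\ker df_1$ be central in $\mathfrak{g}_1$, and its true function here is to secure the $k$-rationality of preimages: this is \cite[Thm.~22.6]{Borel-AG-book} (for a central $k$-isogeny, the $k$-parabolics of $G_1$ are exactly the preimages of the $k$-parabolics of $G_2$) and \cite[Cor.~22.5]{Borel-AG-book} ($k$-definedness of preimages of $k$-subgroups containing a maximal torus, which handles the Levis). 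These, together with the parametrization of the $k$-Levis of a $k$-parabolic by $R_u(P)(k)$-conjugacy \cite[Prop.~20.5]{Borel-AG-book} and the cocharacter dictionary \cite[Lem.~2.11]{Bate-geometric-Inventione}, are exactly what the paper's proof invokes; once they replace your false general principle, your chase goes through and coincides with the paper's argument.
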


Many problems concerning complete reducibility over nonperfect fields are still open. For example:
\begin{oprob}\label{centralizerquestion}
Let $k$ be a field. Let $G/k$ be connected reductive. Suppose that a $k$-subgroup $H$ of $G$ is $G$-cr over $k$. Is the centralizer $C_G(H)$ of $H$ in $G$ $G$-cr over $k$?
\end{oprob}  

It is known that if $k=\bar k$, the answer to Open Problem~\ref{centralizerquestion} is yes; see~\cite[Cor.~3.17]{Bate-geometric-Inventione}. Also, we show
\begin{prop}\label{TCC-centralizer}
Let $k=k_s$. Let $H/k$ be a subgroup of $G/k$. Suppose that $H$ is $G$-ir over $k$. Then $C_G(H)$ is $G$-cr over $k$. 
\end{prop}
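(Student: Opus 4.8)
The plan is to translate the statement into the spherical building $\Delta=\Delta_k(G)$ whose simplices are the proper $k$-parabolic subgroups of $G$ (with the face relation given by reverse inclusion) and then invoke the Tits center conjecture (Theorem~\ref{TCC}). First I would recall the dictionary going back to Serre~\cite{Serre-building}: since parabolic subgroups are self-normalizing, the simplices of $\Delta$ fixed by a subgroup $M$ are precisely those corresponding to proper $k$-parabolics containing $M$; these form a convex subcomplex $\Delta^{M}$, and $M$ is $G$-cr over $k$ exactly when $\Delta^{M}$ is a \emph{completely reducible} subcomplex (every simplex of $\Delta^{M}$ has an opposite lying in $\Delta^{M}$). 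Thus it suffices to prove that $\Sigma:=\Delta^{C_G(H)}$ is completely reducible. The degenerate case is immediate: if $\Sigma=\emptyset$ then $C_G(H)$ lies in no proper $k$-parabolic, so it is $G$-ir over $k$ and hence trivially $G$-cr over $k$; so I would assume $\Sigma\neq\emptyset$.

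The key observation will be that $H$ acts on $\Sigma$. Since $H$ normalizes $C_G(H)$, for every $h\in H$ and every proper $k$-parabolic $P\supseteq C_G(H)$ the conjugate $hPh^{-1}$ is again a proper $k$-parabolic containing $hC_G(H)h^{-1}=C_G(H)$. Hence conjugation by $H$ (more precisely by the dense subgroup $H(k)$, using $k=k_s$) induces automorphisms of $\Delta$ that stabilize the convex subcomplex $\Sigma$.

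Then, supposing for contradiction that $\Sigma$ is not completely reducible, the Tits center conjecture (Theorem~\ref{TCC}) supplies a simplex $c$ of $\Sigma$ — a \emph{center} — fixed by the full subgroup of $\mathrm{Aut}(\Delta)$ stabilizing $\Sigma$, and in particular by the image of $H(k)$. The simplex $c$ corresponds to a proper $k$-parabolic $P_c\supseteq C_G(H)$, and $h$ fixing $c$ means $h$ normalizes $P_c$; as $P_c$ is self-normalizing this gives $H(k)\leq P_c$, and taking Zariski closure (legitimate because $k=k_s$ and $P_c$ is closed) yields $H\leq P_c$. This contradicts $H$ being $G$-ir over $k$, so $\Sigma$ is completely reducible and $C_G(H)$ is $G$-cr over $k$, proving Proposition~\ref{TCC-centralizer}. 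Note that it is precisely the $G$-irreducibility hypothesis, rather than mere $G$-complete reducibility, that turns the existence of a center into an outright contradiction; this is the nonperfect-field analogue of the classical fact over $\bar k$ in~\cite{Bate-geometric-Inventione}.

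The hard part will be the careful bookkeeping of $k$-rationality in the building-theoretic dictionary over a separably closed (possibly nonperfect) field: I must confirm that $\Delta^{C_G(H)}$ is genuinely a convex subcomplex of the $k$-building to which the center conjecture applies, that the conjugation action of $H(k)$ really lands in the stabilizer of $\Sigma$ in $\mathrm{Aut}(\Delta)$, and — most delicately — that the density of $H(k)$ in $H$ (valid since $k=k_s$ and $H$ may be taken smooth) upgrades ``$H(k)\leq P_c$'' to ``$H\leq P_c$.'' These are exactly the places where the separably closed hypothesis does the work; the remainder is a direct transcription of the center-conjecture argument.
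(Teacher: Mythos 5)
Your proof is correct and is essentially the paper's own argument: the paper simply packages your building-theoretic steps as Proposition~\ref{TCC-group} (the fixed-point subcomplex of $C_G(H)$ is convex, failure of complete reducibility makes it contractible via Proposition~\ref{TCC-contractible}, and Theorem~\ref{TCC} together with self-normalization of parabolic subgroups places $N_G(C_G(H))(k)\supseteq H(k)$ inside a proper $k$-parabolic), then concludes exactly as you do by density of $H(k)$ in $H$ and $G$-irreducibility over $k$. The only cosmetic difference is that you inline that intermediate proposition rather than quoting it; in doing so you should cite Proposition~\ref{TCC-contractible} explicitly for the step ``not completely reducible $\Rightarrow$ contractible,'' which is what licenses the application of Theorem~\ref{TCC}.
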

See~\cite{Uchiyama-Nonperfectopenproblem-pre} for more on this problem and other related open problems.

Here is the structure of the paper. In Section~2, we set out the notation. Then, in Section~3, we prove various general results including Theorem~\ref{main}, Proposition~\ref{pseudoreductive}, Proposition~\ref{isogeny}, and Proposition~\ref{TCC-centralizer}. In Section~4, we prove Theorem~\ref{G-cr over k non G-cr}. Then, in Section~5, we consider non-connected $G$, and prove Theorem~\ref{non-connected G-cr over k non-G-cr}. Finally, in Section~6, we consider further applications of non-separable actions for non-connected $G$, and prove Theorem~\ref{non-connected G-cr non-G-cr over k} and Theorem~\ref{G-crM-cr}.

\section{Preliminaries}
Throughout, we denote by $k$ a separably closed field. Although some results hold for an arbitrary field, our assumption on $k$ makes the exposition cleaner. Our references for algebraic groups are~\cite{Borel-AG-book},~\cite{Borel-Tits-Groupes-reductifs},~\cite{Borel-Tits-Groupes-reductifs-complements},~\cite{Humphreys-book1}, and~\cite{Springer-book}. 

Let $G/k$ be a (possibly non-connected) affine algebraic group defined over $k$. By a $k$-group $G$, we mean a $\bar k$-defined affine algebraic group with a $k$-structure~\cite[AG.11]{Borel-AG-book}. We write $G(k)$ for the set of $k$-points of $G$. The \emph{unipotent radical} of $G$ is denoted by $R_u(G)$, and $G$ is called \emph{reductive} if $R_u(G)=\{1\}$. A reductive group $G$ is called \emph{simple} as an algebraic group if $G$ is connected and all proper normal subgroups of $G$ are finite. We write $X_k(G)$ and $Y_k(G)$ for the set of $k$-characters and $k$-cocharacters of $G$ respectively. 

Let $G/k$ be reductive. Fix a $k$-split maximal torus $T$ of $G$ (such a $T$ exists by~\cite[Cor.~18.8]{Borel-AG-book}). Let $\Psi(G,T)$ denote the set of roots of $G$ with respect to $T$. We sometimes write $\Psi(G)$ for $\Psi(G,T)$. Let $\zeta\in\Psi(G)$. We write $U_\zeta$ for the corresponding root subgroup of $G$. We define $G_\zeta := \langle U_\zeta, U_{-\zeta} \rangle$. Let $\zeta, \xi \in \Psi(G)$. Let $\xi^{\vee}$ be the coroot corresponding to $\xi$. Then $\zeta\circ\xi^{\vee}:\bar k^{*}\rightarrow \bar k^{*}$ is a $k$-homomorphism such that $(\zeta\circ\xi^{\vee})(a) = a^n$ for some $n\in\mathbb{Z}$.
Let $s_\xi$ denote the reflection corresponding to $\xi$ in the Weyl group of $G$. Each $s_\xi$ acts on the set of roots $\Psi(G)$ by the following formula~\cite[Lem.~7.1.8]{Springer-book}:
$
s_\xi\cdot\zeta = \zeta - \langle \zeta, \xi^{\vee} \rangle \xi. 
$
\noindent By \cite[Prop.~6.4.2, Lem.~7.2.1]{Carter-simple-book} we can choose $k$-homomorphisms $\epsilon_\zeta : \bar k \rightarrow U_\zeta$  so that 
$
n_\xi \epsilon_\zeta(a) n_\xi^{-1}= \epsilon_{s_\xi\cdot\zeta}(\pm a)
            \text{ where } n_\xi = \epsilon_\xi(1)\epsilon_{-\xi}(-1)\epsilon_{\xi}(1).  \label{n-action on group}
$

We recall the notions of $R$-parabolic subgroups and $R$-Levi subgroups from~\cite[Sec.~2.1--2.3]{Richardson-conjugacy-Duke}. These notions are essential to define $G$-complete reducibility for subgroups of non-connected reductive groups; see~\cite{Bate-nonconnected-PAMS} and~\cite[Sec.~6]{Bate-geometric-Inventione}. 

\begin{defn}
Let $X/k$ be a $k$-affine variety. Let $\phi : \bar k^*\rightarrow X$ be a $k$-morphism of $k$-affine varieties. We say that $\displaystyle\lim_{a\rightarrow 0}\phi(a)$ exists if there exists a $k$-morphism $\hat\phi:\bar k\rightarrow X$ (necessarily unique) whose restriction to $\bar k^{*}$ is $\phi$. If this limit exists, we set $\displaystyle\lim_{a\rightarrow 0}\phi(a) = \hat\phi(0)$.
\end{defn}

\begin{defn}
Let $\lambda\in Y_k(G)$. Define
$
P_\lambda := \{ g\in G \mid \displaystyle\lim_{a\rightarrow 0}\lambda(a)g\lambda(a)^{-1} \text{ exists}\}, $\\
$L_\lambda := \{ g\in G \mid \displaystyle\lim_{a\rightarrow 0}\lambda(a)g\lambda(a)^{-1} = g\}, \,
R_u(P_\lambda) := \{ g\in G \mid  \displaystyle\lim_{a\rightarrow0}\lambda(a)g\lambda(a)^{-1} = 1\}. 
$
\end{defn}
We call $P_\lambda$ an $R$-parabolic subgroup of $G$, $L_\lambda$ an $R$-Levi subgroup of $P_\lambda$. Note that $R_u(P_\lambda)$ a unipotent radical of $P_\lambda$. If $\lambda$ is $k$-defined, $P_\lambda$, $L_\lambda$, and $R_u(P_\lambda)$ are $k$-defined~\cite[Sec.~2.1-2.3]{Richardson-conjugacy-Duke}. If $G$ is connected, $R$-parabolic subgroups and $R$-Levi subgroups are parabolic subgroups and Levi subgroups in the usual sense~\cite[Prop.~8.4.5]{Springer-book}. It is well known that $L_\lambda = C_G(\lambda(\bar k^*))$. 

Let $M/k$ be a reductive subgroup of $G$. Then, there is a natural inclusion $Y_k(M)\subseteq Y_k(G)$ of $k$-cocharacter groups. Let $\lambda\in Y_k(M)$. We write $P_\lambda(G)$ or just $P_\lambda$ for the $k$-parabolic subgroup of $G$ corresponding to $\lambda$, and $P_\lambda(M)$ for the $k$-parabolic subgroup of $M$ corresponding to $\lambda$. It is clear that $P_\lambda(M) = P_\lambda(G)\cap M$ and $R_u(P_\lambda(M)) = R_u(P_\lambda(G))\cap M$. Now we define:
\begin{defn}\label{non-connected-G-cr}
Let $G/k$ be a (possibly non-connected) reductive algebraic group. A subgroup $H$ of $G$ is $G$-cr over $k$ if whenever $H$ is contained in a $k$-defined $R$-parabolic subgroup $P_\lambda$, $H$ is contained in a $k$-defined $R$-Levi subgroup of $P_\lambda$.
\end{defn}

\section{General results}
\subsection{The Tits center conjecture}
Let $G/k$ be connected reductive. We write $\Delta(G)$ for the Tits spherical building of $G$~\cite{Tits-book}. Recall that each simplex in $\Delta(G)$ corresponds to a proper $k$-parabolic subgroup of $G$, and the conjugation action of $G(k)$ on itself induces building automorphisms of $\Delta(G)$. The following is the so-called \emph{Tits center conjecture} (\cite[Sec.~2.4]{Serre-building} and~\cite[Lem.~1.2]{Tits-colloq}), which was recently proved by Tits, M\"{u}hlherr, Leeb, and Ramos-Cuevas~\cite{Leeb-Ramos-TCC-GFA},~\cite{Muhlherr-Tits-TCC-JAlgebra},~\cite{Ramos-centerconj-Geo}:

\begin{thm}\label{TCC}
Let $X$ be a convex contractible subcomplex of $\Delta(G)$. Then there exists a simplex in $X$ that is stabilized by all automorphisms of $\Delta(G)$ stabilizing $X$. 
\end{thm}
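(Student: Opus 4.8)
The plan is to pass from the combinatorial building to its geometric realization and run a circumcenter argument. First I would equip the geometric realization $|\Delta(G)|$ with the spherical (CAT$(1)$) metric $d$ in which each chamber is a spherical simplex of a fixed Coxeter complex; the conjugation action of $G(k)$ and, more generally, every simplicial automorphism of $\Delta(G)$ then acts by isometries of $(|\Delta(G)|, d)$. A combinatorially convex subcomplex $X$ realizes as a convex subset $|X|$ of diameter at most $\pi$, and the group $A := \mathrm{Aut}(\Delta(G))_X$ of automorphisms stabilizing $X$ acts by isometries preserving $|X|$. The key reduction is this: it suffices to produce a point $c \in |X|$ that is \emph{canonically} attached to $X$, for then $A$ must fix $c$, and the carrier (support simplex) of $c$ is a simplex of $X$ fixed by $A$, which is exactly what the statement asserts.

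The canonical point is supplied by the circumcenter. Define the circumradius $\mathrm{rad}(|X|) = \inf_{p}\sup_{x\in|X|} d(p,x)$. In a complete CAT$(1)$ space, any convex subset of circumradius strictly less than $\pi/2$ has a \emph{unique} circumcenter, and uniqueness forces $A$-invariance; convexity of $|X|$ then places its carrier inside $X$, and we are done. So the entire difficulty is concentrated in the extremal regime where $\mathrm{rad}(|X|)\ge \pi/2$ (equivalently, where $|X|$ is too large to admit a unique center), and this is where the hypothesis of contractibility must enter: a full apartment realizes as a sphere and a subbuilding as a wedge of spheres, both of large radius but neither contractible, so contractibility is precisely what should exclude the genuinely degenerate configurations.

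To attack the extremal case I would proceed by induction on the rank of the building together with an iterated-center construction in the style of Balser and Lytchak. When $\mathrm{rad}(|X|)=\pi/2$ the set of circumcenters is itself a nonempty convex, $A$-invariant subset of strictly smaller dimension; replacing $|X|$ by this center set and passing to links of simplices (which are again spherical buildings of lower rank) reduces the rank, so an inductive hypothesis applies. Alternatively one reduces along the classification of irreducible spherical buildings and verifies the statement type by type, as in the combinatorial approach of M\"uhlherr and Tits, with the exceptional types $E_7, E_8, F_4$ treated separately; this is the part completed by Ramos-Cuevas and, geometrically, by Leeb and Ramos-Cuevas.

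The hard part, and the crux of the whole theorem, is exactly the boundary case $\mathrm{rad}(|X|)=\pi/2$: there the circumcenter is no longer unique, naive averaging fails because CAT$(1)$ convexity degenerates at distance $\pi/2$, and one must extract canonical structure from $X$ while genuinely using contractibility to rule out apartments and subbuildings. Proving that the iterated center eventually terminates in a fixed simplex, rather than cycling or stabilizing at an $A$-invariant positive-dimensional sphere carrying no fixed simplex, is the technical heart of the argument; it is also the reason the conjecture resisted a uniform proof for so long and was settled only through a combination of geometric CAT$(1)$ methods and case analysis.
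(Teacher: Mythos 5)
The first thing to note is that the paper does not prove this statement at all: Theorem~\ref{TCC} is the Tits center conjecture itself, which the paper quotes as an external result, citing its resolution by Tits, M\"uhlherr, Leeb, and Ramos-Cuevas. So there is no ``paper's own proof'' to compare against, and the only question is whether your argument stands on its own. It does not. The part you carry out is the standard easy half: realizing $\Delta(G)$ as a CAT$(1)$ space on which automorphisms act by isometries, noting that a \emph{unique} circumcenter of $|X|$ would be fixed by the stabilizer $A$ of $X$, that the carrier of a fixed point of $A$ lying in $|X|$ is an $A$-stable simplex of $X$, and that uniqueness of the circumcenter holds whenever $\mathrm{rad}(|X|)<\pi/2$. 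This reduction is correct and well known (it goes back to Serre's discussion of the conjecture and to Balser--Lytchak), but it disposes of none of the actual content of the theorem.

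Everything you say about the extremal case $\mathrm{rad}(|X|)\geq\pi/2$ is a description of the difficulty rather than an argument. The iterated-center induction you sketch needs two facts you never establish: (i) that the center set, or whatever invariant replacement you pass to, is again an object to which the setup applies (a convex subcomplex, or at least something whose fixed points have simplicial carriers in $X$), and (ii) that the iteration terminates in a fixed simplex rather than stabilizing at an $A$-invariant positive-dimensional convex set containing no fixed simplex --- you explicitly flag (ii) as ``the technical heart'' and leave it unproved. Contractibility of $X$, which is the whole hypothesis of the theorem, enters your text only as a slogan about excluding apartments and subbuildings; nowhere is it actually used. Indeed no uniform argument of the kind you outline is known: this is exactly why the conjecture required the combinatorial case-by-case analysis of M\"uhlherr--Tits for the non-exceptional types and the separate geometric treatments of $F_4$, $E_6$ (Leeb--Ramos-Cuevas) and $E_7$, $E_8$ (Ramos-Cuevas). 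Your closing appeal to those papers is therefore not a permissible gap-filler --- it is an appeal to the very theorem being proved. As it stands, the proposal is an accurate road map of the literature, with the crux correctly located but genuinely missing.
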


In~\cite[Def.~2.2.1]{Serre-building} Serre defined that a convex subcomplex $X$ of $\Delta(G)$ is $\Delta(G)$-\emph{completely reducible over $k$} ($\Delta(G)$-cr over $k$ for short) if for every simplex $x\in X$, there exists a simplex $x'\in X$ opposite to $x$ in $X$. Serre showed~\cite[Thm.~2]{Serre-building}:
\begin{prop}\label{TCC-contractible}
Let $X$ be a convex subcomplex of $\Delta(G)$. Then $X$ is $\Delta(G)$-cr over $k$ if and only if $X$ is not contractible.
\end{prop}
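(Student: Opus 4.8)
The plan is to pass to the geometric realization $|\Delta(G)|$, equipped with its canonical CAT$(1)$ angular metric, in which each apartment is a round sphere $S^{r-1}$ ($r$ the semisimple rank of $G$) and all distances lie in $[0,\pi]$; see \cite{Tits-book}. Under this realization a combinatorially convex subcomplex $X$ becomes a geodesically convex subset $|X|$: any two points of $|X|$ at distance $<\pi$ are joined by a unique geodesic, which lies in $|X|$. The key lemma I would establish first is that, for a simplex $\sigma\in X$ with barycentre $c$, the points of $|\Delta(G)|$ at distance $\pi$ from $c$ are exactly the barycentres of the simplices opposite to $\sigma$: a geodesic of length $\pi$ from $c$ lies in some apartment $A\supseteq\sigma$ and terminates at the $A$-antipode of $c$, which is the barycentre of the simplex of $A$ opposite to $\sigma$, and conversely every simplex opposite to $\sigma$ arises this way. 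Summing over all points, this yields the reformulation that $X$ is $\Delta(G)$-cr over $k$ if and only if every point of $|X|$ has an antipode in $|X|$.

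For the implication ``$X$ not $\Delta(G)$-cr $\Rightarrow$ $|X|$ contractible'', I would choose a simplex $\sigma\in X$ with no opposite in $X$ and let $c$ be its barycentre. By the key lemma $c$ has no antipode in $|X|$, so $d(c,p)<\pi$ for every $p\in|X|$ (the metric has diameter $\pi$). Hence each $p$ is joined to $c$ by a unique geodesic, which lies in $|X|$ by convexity and varies continuously with $p$; sliding along these geodesics gives a deformation retraction of $|X|$ onto $c$, so $|X|$ is contractible. This is the contrapositive of the ``not contractible $\Rightarrow$ cr'' direction.

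For the converse ``$X$ $\Delta(G)$-cr $\Rightarrow$ $|X|$ not contractible'', I would first observe that a cr subcomplex $X$ must contain a chamber of $\Delta(G)$: if $\sigma$ is a simplex of maximal dimension in $X$ and $\sigma'\in X$ is opposite to it, then the convex hull of $\sigma$ and $\sigma'$ lies in $X$, and unless $\sigma$ is already a chamber this hull contains simplices of strictly larger dimension (minimal galleries between opposite faces pass through chambers), contradicting maximality. Now pick a chamber $C\in X$; by complete reducibility it has an opposite chamber $C'\in X$, and the convex hull of $C$ and $C'$ is an apartment $A\cong S^{r-1}$ contained in $X$. The building retraction $\rho_{A,C}\colon|\Delta(G)|\to|A|$ centred at $C$ then restricts to a retraction $|X|\to|A|$ fixing $|A|$ pointwise. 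Thus the sphere $S^{r-1}$ is a retract of $|X|$; were $|X|$ contractible, so would be its retract $S^{r-1}$, which is absurd for $r\geq 1$. Hence $|X|$ is not contractible, completing the equivalence.

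The main obstacle is not the topology but the combinatorics-to-geometry dictionary underlying every step: identifying combinatorial opposition with metric antipodality, verifying that a combinatorially convex subcomplex realizes to a geodesically convex (hence CAT$(1)$) subset, and invoking the structural building facts that the convex hull of two opposite chambers is a single apartment and that apartment retractions are defined on the whole building. Each of these is standard in the theory of spherical buildings \cite{Tits-book, Serre-building}, but care is needed to check that they survive the passage to the subcomplex $X$; in particular, the step showing that a cr convex subcomplex is forced to be full-dimensional is what makes the apartment-retraction argument available.
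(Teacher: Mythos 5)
The paper itself gives no proof of this proposition: it is quoted directly from Serre \cite[Thm.~2]{Serre-building}. So your attempt has to be judged on its own merits. Your first direction is fine: if some simplex $\sigma\in X$ has no opposite in $X$, then its barycentre $c$ has no antipode in $|X|$, so every point of $|X|$ is at distance $<\pi$ from $c$ and the (unique, continuously varying) geodesics to $c$ lie in $|X|$, giving a contraction. This is exactly the standard argument, and your ``key lemma'' identifying antipodes of $c$ with barycentres of opposite simplices is correct.

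The second direction contains a genuine gap: the claim that a completely reducible convex subcomplex must contain a chamber is false. Take $\Delta$ of type $A_2$ (the flag complex of a projective plane) and let $X$ consist of a point $p$ and a line $L$ with $p\notin L$. These two vertices are opposite, so $X$ is $\Delta$-cr; $X$ is convex in the relevant combinatorial sense (closure under projections: $\mathrm{proj}_p(L)=p$ and $\mathrm{proj}_L(p)=L$, because projection onto an \emph{opposite} simplex degenerates); and $X$ contains no chamber. (It is not contractible --- it is a $0$-sphere --- so it is consistent with the proposition, but it refutes your lemma.) The error is a conflation of two notions of hull. Metrically, the union of geodesics joining the barycentre of $\sigma$ to the barycentre of an opposite $\sigma'$ does sweep out entire apartments; but neither combinatorial convexity nor CAT$(1)$ convexity forces $X$ to contain \emph{any} of those geodesics, precisely because their endpoints are antipodal: convexity only constrains pairs at distance $<\pi$, where the geodesic is unique. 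So ``the convex hull of $\sigma$ and $\sigma'$ lies in $X$'' yields nothing beyond $\sigma$, $\sigma'$ and their faces, and your maximality contradiction evaporates.

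As a consequence, your apartment-retraction argument proves the statement only when $X$ happens to contain a pair of opposite chambers. In general a cr convex subcomplex contains no apartment at all, and its non-contractibility is witnessed in dimension $\dim X$, not $r-1$: the full result (this is the hard half of Serre's Theorem~2, where the statement is sharpened to say that a non-contractible convex subcomplex has the homotopy type of a bouquet of spheres of dimension $\dim X$) cannot be reached by exhibiting an $(r-1)$-sphere as a retract. So this direction needs a genuinely different idea --- e.g.\ producing a nonzero top-dimensional homology class --- and cannot be repaired within your strategy.
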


Combining Theorem~\ref{TCC} with Proposition~\ref{TCC-contractible}, and translating the result into the language of algebraic groups we obtain

\begin{prop}\label{TCC-group}
Let $H$ be a (not necessarily $k$-defined) subgroup of $G/k$. If $H$ is not $G$-cr over $k$, then there is a proper $k$-parabolic subgroup $P$ of $G$ such that $P$ contains $H$ and $N_G(H)(k)$ where $N_G(H)(k):=G(k)\cap N_G(H)$.
\end{prop}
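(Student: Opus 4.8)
The plan is to translate the hypothesis into the Tits building $\Delta(G)$ and then invoke the Tits center conjecture (Theorem~\ref{TCC}) together with Serre's contractibility criterion (Proposition~\ref{TCC-contractible}). First I would introduce the subcomplex $X$ of $\Delta(G)$ consisting of those simplices $\sigma$ whose associated proper $k$-parabolic subgroup $P_\sigma$ contains $H$. Following Serre, this fixed-point subcomplex is convex, and the two notions of complete reducibility match up: a simplex $\sigma\in X$ admits an opposite simplex lying in $X$ precisely when the $k$-parabolic $P_\sigma\supseteq H$ has a $k$-Levi subgroup containing $H$, since opposite simplices correspond to opposite $k$-parabolic subgroups whose intersection is a common $k$-Levi subgroup. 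Hence $H$ is $G$-cr over $k$ if and only if $X$ is $\Delta(G)$-cr over $k$. As $H$ is assumed \emph{not} $G$-cr over $k$, the subcomplex $X$ is not $\Delta(G)$-cr over $k$, so Proposition~\ref{TCC-contractible} forces $X$ to be contractible; in particular $X$ is nonempty, so some proper $k$-parabolic does contain $H$.

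Next I would produce a group of building automorphisms stabilizing $X$. Every $g\in N_G(H)(k)=G(k)\cap N_G(H)$ lies in $G(k)$, so conjugation by $g$ induces an automorphism of $\Delta(G)$. Moreover, if $P_\sigma\supseteq H$ then $gP_\sigma g^{-1}\supseteq gHg^{-1}=H$, so conjugation by $g$ carries simplices of $X$ to simplices of $X$; thus $N_G(H)(k)$ stabilizes $X$. I would then apply the Tits center conjecture (Theorem~\ref{TCC}) to the convex contractible subcomplex $X$ to obtain a simplex $x\in X$ fixed by every automorphism of $\Delta(G)$ stabilizing $X$, in particular by the conjugation action of $N_G(H)(k)$.

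Finally I would read off the desired parabolic subgroup. Let $P:=P_x$ be the proper $k$-parabolic subgroup attached to $x$; since $x\in X$ we have $H\subseteq P$. For each $g\in N_G(H)(k)$, fixing the simplex $x$ means $gPg^{-1}=P$, that is, $g\in N_G(P)$; as parabolic subgroups are self-normalizing, $N_G(P)=P$, whence $N_G(H)(k)\subseteq P$. This exhibits the proper $k$-parabolic subgroup $P$ containing both $H$ and $N_G(H)(k)$, as required.

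The assembly above is largely routine, and the step that most deserves care is the building-theoretic dictionary used at the outset: because $H$ is not assumed $k$-defined, one must confirm that the subcomplex of $k$-parabolic subgroups containing $H$ is genuinely convex and that the equivalence between $H$ being $G$-cr over $k$ and $X$ being $\Delta(G)$-cr over $k$ persists in this generality. This is precisely where the running hypothesis $k=k_s$ is used, guaranteeing that $\Delta(G)$ is the building of $k$-parabolic subgroups on which $G(k)$ acts by building automorphisms. The only other subtlety is the passage from ``$x$ is fixed'' to $gPg^{-1}=P$ rather than a mere setwise permutation of the faces of $x$; this is settled by recalling that the parabolic attached to a simplex is the intersection of the maximal parabolic subgroups at its vertices, so stabilizing the simplex setwise already forces $gP_xg^{-1}=P_x$.
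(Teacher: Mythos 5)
Your proposal is correct and follows essentially the same route as the paper's own proof: both pass to the convex subcomplex of proper $k$-parabolic subgroups containing $H$ (the fixed-point complex $\Delta(G)^H$), deduce its contractibility from Proposition~\ref{TCC-contractible} since $H$ is not $G$-cr over $k$, apply Theorem~\ref{TCC} to the automorphisms induced by $N_G(H)(k)$, and conclude from the self-normalization of parabolic subgroups. The extra care you take (nonemptiness of the complex, the dictionary between opposite simplices and $k$-Levi subgroups, and why setwise stabilization of a simplex gives $gPg^{-1}=P$) only makes explicit steps the paper leaves implicit.
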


\begin{proof}
Let $\Delta(G)^{H}$ be the fixed point subcomplex of the action of $H$. Then the set of $\Delta(G)^{H}$ is a convex subcomplex of $\Delta(G)$ by~\cite[Prop.~3.1]{Serre-building} and $\Delta(G)^H$ corresponds to the set of all proper $k$-parabolic subgroups of $G$ containing $H$. Since $H$ is not $G$-cr over $k$, there exists a proper $k$-parabolic subgroup of $G$ containing $H$ such that $H$ is not contained in any opposite of $P$. So, $\Delta(G)^H$ is contractible by Proposition~\ref{TCC-contractible}. It is clear that $N_G(H)(k)$ induces automorphisms of $\Delta(G)$ stabilizing $\Delta(G)^H$. By Theorem~\ref{TCC}, there exists a simplex $s_P$ in $\Delta(G)^H$ stabilized by automorphisms induced by $N_G(H)(k)$. Since parabolic subgroups are self-normalizing, we have $N_G(H)(k) < P$. 
\end{proof}

Note that under the assumption of Proposition~\ref{TCC-group}, $N_G(H)$ is not necessarily $k$-defined even when $H$ is $k$-defined. So, we might not have a proper $k$-parabolic subgroup containing $H$ and $N_G(H)$ .

\begin{proof}[Proof of Proposition~\ref{TCC-centralizer}]
Suppose that $C_G(H)$ is not $G$-cr over $k$. Since $H$ normalizes $C_G(H)$, by Proposition~\ref{TCC-group}, there exists a proper $k$-parabolic subgroup of $P$ of $G$ containing $H(k)$. Since $k=k_s$, $H(k)$ is dense in $H$ by~\cite[AG.13.3]{Borel-AG-book}. So $H\leq P$. This is a contradiction since $H$ is $G$-ir over $k$. 
\end{proof}

\subsection{Complete reducibility and pseudo-reductivity}
The main task in this section is to prove Theorem~\ref{main}. Before that, we need some preparations:

\begin{lem}\label{G-cr-L-cr}
Let $G/k$ be connected reductive, and let $H$ be a (not necessarily $k$-defined) subgroup of $G$. Let $L$ be a $k$-Levi subgroup of $G$ containing $H$. Then $H$ is $G$-cr over $k$ if and only if $H$ is $L$-cr over $k$.
\end{lem}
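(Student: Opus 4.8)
The plan is to realize $L$ as $L=L_\lambda=C_G(\lambda(\bar k^*))$ for a suitable $\lambda\in Y_k(G)$, so that $L$ is the $k$-Levi subgroup of the $k$-parabolic $P_\lambda(G)$, and then to pass back and forth between $k$-parabolic subgroups of $L$ and of $G$ using the $k$-cocharacter description together with the relations $P_\mu(L)=P_\mu(G)\cap L$, $L_\mu(L)=L_\mu(G)\cap L$, and $R_u(P_\mu(L))=R_u(P_\mu(G))\cap L$ for $\mu\in Y_k(L)$. Throughout I would use that, because $k=k_s$, the group $R_u(P_\mu(G))(k)$ acts transitively on the $k$-Levi subgroups of $P_\mu(G)$ (and likewise inside $L$), so that the assertion ``$H$ lies in some $k$-Levi'' can always be witnessed by a $k$-rational conjugate of the standard Levi $L_\mu$.

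\textbf{Forward direction.} Suppose $H$ is $G$-cr over $k$ and let $Q$ be a $k$-parabolic subgroup of $L$ containing $H$; write $Q=P_\mu(L)=P_\mu(G)\cap L$ with $\mu\in Y_k(L)$. Since $\mu\in Y_k(L)\subseteq Y_k(G)$ and $H\le P_\mu(G)$, the hypothesis yields $u\in R_u(P_\mu(G))(k)$ with $H\le uL_\mu(G)u^{-1}$. The difficulty is that $u$ need not lie in $L$, so $uL_\mu(G)u^{-1}\cap L$ need not be a $k$-Levi of $Q$. To remedy this I would exploit that $\lambda$ and $\mu$ commute (as $\mu$ maps into $L=C_G(\lambda(\bar k^*))$), so that $\lambda$ normalizes $P_\mu(G)$, $L_\mu(G)$ and $R_u(P_\mu(G))$, and that $H\le C_G(\lambda(\bar k^*))$. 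A limiting argument along $\lambda$ (the $k$-analogue of the cocharacter manipulations in \cite{Bate-geometric-Inventione}) then replaces $u$ by some $u'\in C_{R_u(P_\mu(G))}(\lambda)(k)=R_u(P_\mu(L))(k)$ with $H\le u'L_\mu(G)(u')^{-1}$; intersecting with $L$ gives $H\le u'L_\mu(L)(u')^{-1}$, a $k$-Levi subgroup of $Q$.

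\textbf{Reverse direction.} Suppose $H$ is $L$-cr over $k$ and let $H\le P_\mu(G)$ for some $\mu\in Y_k(G)$; I want $H$ inside a $k$-Levi of $P_\mu(G)$. The first step is the general fact that the intersection $L\cap P_\mu(G)$ of the $k$-Levi $L=L_\lambda$ with the $k$-parabolic $P_\mu(G)$ is again a $k$-parabolic subgroup of $L$, say $L\cap P_\mu(G)=P_\nu(L)$ with $\nu\in Y_k(L)$ (seen by conjugating $\lambda,\mu$ into a common $k$-split maximal torus and comparing root subsets, using $k=k_s$). Since $H\le L\cap P_\mu(G)=P_\nu(L)$, the hypothesis gives a $k$-Levi subgroup of $P_\nu(L)$ containing $H$. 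Finally I would check that a $k$-Levi of $L\cap P_\mu(G)$ is contained in a $k$-Levi of $P_\mu(G)$: the standard Levi $L\cap L_\mu(G)$ of $L\cap P_\mu(G)$ visibly lies in $L_\mu(G)$, and the general case follows by $R_u$-conjugacy over $k$. Hence $H$ lies in a $k$-Levi of $P_\mu(G)$, as required.

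\textbf{Main obstacle.} The routine part is the reduction to cocharacters; the genuine difficulty in both directions is the rationality-aware transfer of Levi subgroups between $L$ and $G$, namely arranging that the conjugating unipotent elements can simultaneously be taken $k$-rational and compatible with $\lambda$ (inside $L$ in the forward direction, resp.\ carrying a Levi of $L\cap P_\mu(G)$ into a Levi of $P_\mu(G)$ in the reverse direction). The hypothesis $k=k_s$ is precisely what renders the relevant $k$-points dense and makes the $k$-Levi subgroups of a $k$-parabolic a single $R_u(P)(k)$-orbit, so that the $\bar k$-arguments of \cite{Bate-geometric-Inventione} descend to $k$; I expect the commuting-cocharacter limit in the forward direction to be the most delicate point to make fully precise.
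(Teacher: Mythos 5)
Your proposal has genuine gaps in both directions, and it is worth noting at the outset that the paper does not prove this lemma at all: its entire proof is the citation \cite[Thm.~1.4]{Bate-cocharacterbuildings-Arx}, a deep external theorem whose known proof requires the rational Hilbert--Mumford theorem together with the Tits center conjecture. Even over $\bar k$ the statement is not cocharacter bookkeeping: the classical proof (via \cite[Thm.~3.1, Cor.~3.22]{Bate-geometric-Inventione}) goes through geometric invariant theory (strong reductivity and closedness of orbits), not through direct manipulation of parabolic subgroups. Any one-page "by hand" derivation should therefore be suspect from the start.

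Concretely: in the reverse direction, your ``general fact'' that $L\cap P_\mu(G)$ is a $k$-parabolic subgroup of $L$ is false. Take $G=SL_2$, $L=T$ a maximal torus (a $k$-Levi of a Borel), and $P_\mu$ a Borel subgroup not containing $T$; then $T\cap P_\mu$ is finite, whereas the only parabolic subgroup of $T$ is $T$ itself (for a nontrivial $H$, take $G=SL_2\times SL_2$, $L=T\times SL_2$, $H=1\times SL_2$, $P_\mu=B'\times SL_2$). The intersection $L_\lambda\cap P_\mu$ is a parabolic of $L_\lambda$ precisely when $P_\mu\supseteq\lambda(\bar k^{*})$, and that is exactly what you cannot assume: $P_\mu$ is only known to contain $H$, and your proposed remedy --- conjugating $\lambda$ and $\mu$ into a common $k$-split maximal torus --- destroys the containments $H\le L_\lambda$ and $H\le P_\mu$, since the conjugating element need not normalize $H$. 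In the forward direction, the step you flag as ``delicate'' is in fact unproved and is the crux of the whole theorem: the limit $\lim_{a\to 0}\lambda(a)u\lambda(a)^{-1}$ need not exist, because $u\in R_u(P_\mu(G))(k)$ need not lie in $P_\lambda$ (decompose $u$ according to its $\lambda$-weights; any component of negative weight makes the limit diverge). The assertion that the witness $u$ can be replaced by some $u'\in (R_u(P_\mu(G))\cap L)(k)$ is essentially equivalent to the lemma itself; no elementary limiting argument for it is known, and over nonperfect $k$ the only available proof is the cocharacter-closure/center-conjecture machinery of the cited reference. So the proposal, as it stands, assumes the theorem at the two decisive points.
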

\begin{proof}
This is \cite[Thm.~1.4]{Bate-cocharacterbuildings-Arx}.
\end{proof}

The next result is a slight generalization of~\cite[Prop.~2.9]{Serre-building}, where Serre assumed the subgroup $N$ is $k$-defined. Note that Serre's argument assumed that Theorem~\ref{TCC} holds, but this was not known at the time. We have translated Serre's building-theoretic argument into a group-theoretic one.
\begin{prop}\label{Normal}
Let $G/k$ be connected reductive. Let $H/k$ be a subgroup of $G$ such that $H$ is $G$-cr over $k$. If $N$ is a (not necessarily $k$-defined) normal subgroup of $H$, then $N$ is $G$-cr over $k$.
\end{prop}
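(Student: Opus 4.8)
The goal is to prove Proposition~\ref{Normal}: if $H$ is $G$-cr over $k$ and $N \trianglelefteq H$, then $N$ is $G$-cr over $k$. The natural tool here is the Tits center conjecture as packaged in Proposition~\ref{TCC-group}. The plan is to argue by contradiction: suppose $N$ is not $G$-cr over $k$. Then I would like to produce a proper $k$-parabolic subgroup containing all of $H$ in a way that is incompatible with $H$ being $G$-cr.

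First I would invoke Proposition~\ref{TCC-group} applied to $N$. Since $N$ is not $G$-cr over $k$, that proposition yields a proper $k$-parabolic subgroup $P$ of $G$ with $N \leq P$ and, crucially, $N_G(N)(k) \leq P$. The key observation is that because $N$ is normal in $H$, the group $H$ normalizes $N$, so $H(k) \leq N_G(N)(k) \leq P$. Since $k = k_s$, the set of $k$-points $H(k)$ is dense in $H$ by~\cite[AG.13.3]{Borel-AG-book}, and as $P$ is closed we conclude $H \leq P$.

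Now I have $H$ contained in a proper $k$-parabolic $P$. Because $H$ is $G$-cr over $k$, $H$ lies in some $k$-Levi subgroup $L$ of $P$. The remaining task is to derive a contradiction with the assumption that $N$ is not $G$-cr over $k$. Here I would pass to the Levi subgroup: since $N \leq H \leq L$ and $L$ is a $k$-Levi subgroup (hence a reductive $k$-group), I expect to show $N$ is $L$-cr over $k$ and then lift this back to $G$ via Lemma~\ref{G-cr-L-cr}. The cleanest route is probably an induction on $\dim G$ (or on the dimension of the ambient group): within $L$, the subgroup $N$ is still normal in $H$ and $H$ is $L$-cr over $k$ by Lemma~\ref{G-cr-L-cr}, so the inductive hypothesis gives that $N$ is $L$-cr over $k$, and then Lemma~\ref{G-cr-L-cr} again promotes this to $N$ being $G$-cr over $k$, the desired contradiction. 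To make the induction bottom out, I would note that $L$ is a \emph{proper} Levi subgroup of $G$ whenever $P$ is a proper parabolic, so $\dim L < \dim G$, giving a strictly smaller instance.

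The main obstacle I anticipate is setting up the induction so that the base case and the strict decrease in dimension are genuine. The subtlety is that Proposition~\ref{TCC-group} only guarantees $N_G(N)(k) \leq P$ rather than $N_G(N) \leq P$, so I must lean on the density of $H(k)$ in $H$ (which is exactly where $k = k_s$ is used) to upgrade containment of $k$-points to containment of the full group $H$. A second point requiring care is verifying that $N$ remains normal in $H$ after restricting attention to $L$ and that the hypotheses of Lemma~\ref{G-cr-L-cr} are met for both $H$ and $N$ inside $L$; these are routine but must be stated. Provided the induction is framed on $\dim G$ with the proper Levi $L$ furnishing the smaller case, the argument closes.
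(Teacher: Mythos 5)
Your proof is correct, and it reaches the result by a genuinely different organization than the paper, even though it relies on the same three ingredients: Proposition~\ref{TCC-group}, density of $H(k)$ in $H$ (this is where $k=k_s$ and the $k$-definedness of $H$ enter), and Lemma~\ref{G-cr-L-cr}. The paper's proof is \emph{not} inductive: it fixes a \emph{minimal} $k$-parabolic subgroup $P$ of $G$ containing $H$, takes a $k$-Levi subgroup $L$ of $P$ containing $H$ (possible since $H$ is $G$-cr over $k$), and applies Proposition~\ref{TCC-group} to $N$ \emph{inside $L$}: if $N$ were not $L$-cr over $k$, one would get a proper $k$-parabolic subgroup $P_L$ of $L$ containing $N$ and $H(k)$, hence $H$ by density, and then $P_L\ltimes R_u(P)$ would be a $k$-parabolic subgroup of $G$ strictly contained in $P$ and containing $H$ --- using the Borel--Tits fact~\cite[Sec.~4.4(c)]{Borel-Tits-Groupes-reductifs} that a parabolic subgroup of a Levi extends to a parabolic subgroup of $G$ --- contradicting minimality of $P$; so $N$ is $L$-cr over $k$ and Lemma~\ref{G-cr-L-cr} finishes. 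You instead apply Proposition~\ref{TCC-group} to $N$ at the level of $G$ itself, pass to a $k$-Levi $L$ of the resulting proper parabolic, and recurse by induction on $\dim G$. What your route buys is that you never need the semidirect-product extension fact for parabolics of Levi subgroups; the strict inequality $\dim L<\dim G$ (valid because $P$ is proper and $G$ is connected) makes the induction well-founded, with the case of groups admitting no proper $k$-parabolic (e.g.\ tori) as the trivial base. What the paper's route buys is a self-contained, non-recursive argument with a single application of the center conjecture. One small point of care if you write yours up: state the induction hypothesis as ranging over \emph{all} connected reductive $k$-groups of smaller dimension together with all $k$-defined $G$-cr over $k$ subgroups and their normal subgroups, since the Levi $L$ produced at each step varies with the data.
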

\begin{proof}
Let $P$ be a minimal $k$-parabolic subgroup of $G$ containing $H$. Since $H$ is $G$-cr over $k$, there exists a $k$-Levi subgroup $L$ of $P$ containing $H$. If $N$ is $L$-cr over $k$, by Lemma~\ref{G-cr-L-cr}, we are done. So suppose that $N$ is not $L$-cr over $k$. Let $\Delta(L)$ be the spherical building corresponding to $L$. Let $\Delta(L)^N$ be the fixed point subcomplex of $\Delta(L)$. Since $N\trianglelefteq H\leq L$ and $N$ is not $L$-cr over $k$, by Proposition~\ref{TCC-group}, there exists a proper $k$-parabolic subgroup $P_L$ of $L$ containing $N$ and $H(k)$. Since $k=k_s$, $H(k)$ is dense in $H$ by~\cite[AG.13.3]{Borel-AG-book}. So $H\leq P_L < L$. Then $H\leq P_c\ltimes R_u(P) < L\ltimes R_u(P) = P$. Since $P_c\ltimes R_u(P)$ is a $k$-parabolic subgroup of $G$ by~\cite[Sec.~4.4(c)]{Borel-Tits-Groupes-reductifs}, this is a contradiction by the minimality of $P$.
\end{proof} 

We also need the following deep result which was conjectured by Tits~\cite{Tits-Note1-CollegeFrance} and proved by Gille~\cite{Gille-unipotent-Duke}.
\begin{prop}\label{plongeable}
Let $G/k$ be a semisimple simply connected algebraic group. If $[k:k^p]\leq p$, then every unipotent subgroup of $G(k)$ is $k$-plongeable.
\end{prop}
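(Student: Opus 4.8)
This is the Tits conjecture on unipotent elements (\cite{Tits-Note1-CollegeFrance},~\cite{Tits-unipotent-Utrecht}), proved by Gille, and I expect any genuine proof to be deep; my plan is to follow the Tits--Gille strategy and organize it around a single hard core. The first step is a sequence of reductions. Since $k=k_s$, the set of $k$-points of a unipotent subgroup is dense in its Zariski closure, which is a unipotent $k$-subgroup, so it suffices to treat a unipotent $k$-subgroup $U$, and by a dévissage (filtering by a composition series of $k$-subgroups) one reduces further to the smooth connected case. Because $G$ is semisimple simply connected I would then pass to its simple factors. The point of these reductions is to isolate the genuinely difficult configuration: if $U$ lies in the $k$-unipotent radical of a proper $k$-parabolic we can quotient by that radical and induct on $\dim U$ (using that $k$-plongeability is inherited from a Levi quotient), so the only obstruction is the \emph{$k$-anisotropic} case, where $U$ is contained in no proper $k$-parabolic at all; there plongeability forces $U=1$ since $R_u(G)=\{1\}$.

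To make this reduction precise I would attach to $U$ a canonical $k$-parabolic. Over $\bar k$ the Borel--Tits theorem~\cite[Prop.~3.1]{Borel-Tits-unipotent-invent} places $U$ inside $R_u(Q)$ for some parabolic $Q$, and the associated canonical parabolic (equivalently, the instability-flag parabolic of Kempf--Rousseau--Hesselink type) is uniquely determined by $U$, hence Galois-stable, and so descends to a $k$-parabolic $P$ with $U\leq P$ and $N_G(U)\leq P$. Writing $L=P/R_u(P)$ and letting $\bar U$ denote the image of $U$ in $L$, the entire problem collapses to showing $\bar U=1$. By construction $\bar U$ is a $k$-anisotropic unipotent $k$-subgroup of the reductive group $L$, whose derived group is again semisimple simply connected; thus everything reduces to proving that a semisimple simply connected $k$-group contains no nontrivial $k$-anisotropic unipotent $k$-subgroup when $[k:k^p]\leq p$.

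The heart of the matter, and where I expect all the difficulty to lie, is exactly this last assertion. Simple-connectedness is indispensable: over an imperfect field the adjoint group $PGL_p$ already contains $k$-anisotropic unipotent elements, even when $[k:k^p]=p$ (cf.\ Example~\ref{PGL-nonplongeable}), so no argument can succeed that does not use the simply connected hypothesis in an essential way. The plan is to bring in the fine structure theory of $k$-wound unipotent groups over imperfect fields: a nontrivial $k$-anisotropic unipotent subgroup would be $k$-wound, and the hypothesis $[k:k^p]\leq p$, which says $k$ has $p$-dimension at most one, severely restricts such wound groups and their possible embeddings compatible with the root datum of a simply connected $L$. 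Controlling this interaction between the internal structure of a wound unipotent group and the Levi structure of $G$ is the genuine obstacle; it is precisely the content of Gille's theorem, and rather than reprove it I would cite~\cite{Gille-unipotent-Duke}.
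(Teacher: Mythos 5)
The first thing to note is that the paper offers no proof of this proposition at all: it is quoted as a deep theorem, conjectured by Tits and proved by Gille, with the citation~\cite{Gille-unipotent-Duke} standing in for the entire argument. Since Gille's theorem \emph{is} the statement at hand --- verbatim, for semisimple simply connected $G$ and $[k:k^p]\leq p$ --- your concluding citation already does all the work, and in that sense your proposal and the paper coincide. The reduction scaffolding you erect in front of the citation is therefore unnecessary; moreover, one of its steps is genuinely false, and it is worth seeing why, because that failure is exactly what makes the theorem deep.

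The false step is the descent of the canonical parabolic: you assert that the Kempf--Rousseau--Hesselink/Borel--Tits parabolic attached to $U$ over $\bar k$ is ``uniquely determined by $U$, hence Galois-stable, and so descends to a $k$-parabolic $P$.'' In the setting of this paper $k=k_s$ is nonperfect, so $\mathrm{Aut}(\bar k/k)$ is trivial while $\bar k\neq k$: Galois-stability is vacuous here and buys no $k$-definedness, because uniqueness arguments can never bridge a purely inseparable extension. The paper's Example~\ref{PGL-nonplongeable} is a concrete counterexample to your claim: for the $k$-anisotropic unipotent $u\in PGL_2(k)$ there, the canonical parabolic of $\langle u\rangle$ over $\bar k$ is the unique Borel subgroup containing $u$ (the stabilizer of its unique fixed point $[\sqrt a : 1]$ in $\mathbb{P}^1$), and this Borel is not $k$-defined --- that is precisely why $u$ is anisotropic. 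Note also that your descent argument nowhere uses simple-connectedness or $[k:k^p]\leq p$, so if it were valid it would prove the proposition for $PGL_p$ as well, contradicting the very example you invoke when you say simple-connectedness is indispensable. If you want a reduction at all, the harmless version is the dichotomy from your first paragraph (either $U$ lies in a proper $k$-parabolic, and one inducts on the semisimple rank through the Levi quotient, whose derived group is again simply connected, or $U$ is $k$-anisotropic and one must show $U=1$); but since Gille's theorem covers the full statement directly, the cleanest course --- and the paper's --- is to cite it and stop.
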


Now we are ready: 
\begin{proof}[Proof of Theorem~\ref{main}]
Since $R_{u,k}(H)(k)$ is a unipotent subgroup of $G(k)$, by Proposition~\ref{plongeable}, there exists a $k$-parabolic subgroup $P$ of $G$ such that $R_{u,k}(H)(k)\leq R_u(P)$. Since $R_{u,k}(H)(k)$ is a normal subgroup of $H$, and $H$ is $G$-cr over $k$, $R_{u,k}(H)(k)$ is $G$-cr over $k$ by Proposition~\ref{Normal}. So $R_{u,k}(H)(k)$ is contained in some $k$-Levi subgroup of $P$. Thus $R_{u,k}(H)(k)=1$. Since $R_{u,k}(H)(k)$ is dense in $R_{u,k}(H)$, we are done. 
\end{proof}

Note that in Proposition~\ref{plongeable}, the condition $[k:k^p]\leq p$ was necessary since Tits showed the following~\cite[Thm.~7]{Tits-Note3-CollegeFrance}.

\begin{prop}\label{semisimple-bad}
Let $G/k$ be a simple simply connected algebraic group. If $[k:k^p]\geq p^2$ and $p$ is bad for $G$, then $G(k)$ has a $k$-nonplongeable unipotent element.
\end{prop}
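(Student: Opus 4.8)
The plan is to exhibit an explicit unipotent element $u\in G(k)$ and to show directly that it lies in no $R_u(P)(k)$ for a proper $k$-parabolic $P$. The first step is to record the right reformulation of plongeability. Since $k=k_s$, the group $G$ is $k$-split, so $Y_k(G)=Y(G)$ and every proper $k$-parabolic is $G(k)$-conjugate to some $P_\lambda$ with $\lambda\in Y_k(G)$. Unwinding the limit definition of $R_u(P_\lambda)$, the element $u$ is $k$-plongeable precisely when there are $g\in G(k)$ and a noncentral $\lambda\in Y_k(G)$ (equivalently $P_\lambda\neq G$) with $\lim_{a\to 0}\lambda(a)\,g^{-1}ug\,\lambda(a)^{-1}=1$; equivalently, the $G(k)$-orbit of $u$ meets $R_u(P_\lambda)(k)=\prod_{\langle\zeta,\lambda\rangle>0}U_\zeta(k)$ for some such $\lambda$. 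Over $\bar k$ this holds for every unipotent element, since each one lies in $R_u(B)$ of some Borel. So the entire content is that over $k$ only $G(k)$-conjugations are available, and the nonperfectness of $k$ must obstruct reaching a positive part.

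Next I would build $u$ from the bad prime. Let $\tilde\alpha=\sum_i c_i\alpha_i$ be the highest root; since $p$ is bad, some coefficient $c_j$ is divisible by $p$. Choosing $a,b\in k$ that are $p$-independent over $k^p$ — possible exactly because $[k:k^p]\ge p^2$ — I would take $u$ to be a short product of root elements $\epsilon_\zeta$ along a chain of roots running from $\alpha_j$ up toward $\tilde\alpha$, with the parameters $a$ and $b$ inserted at two distinguished root groups. The role of the bad prime is that, by the Chevalley commutator relations of Section~2, the structure constant governing this chain vanishes modulo $p$; this is what forces the cocharacter that would naturally contract $u$ to require $p$-th roots of $a$ and of $b$. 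For $p\in\{2,3\}$ one can keep the chain inside a rank-two subsystem of type $B_2$ or $G_2$ and make the computation two-dimensional, whereas for $p=5$ (type $E_8$) no rank-two subsystem realizes a coefficient divisible by $5$, so the chain must be taken inside the full $E_8$ root system.

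The heart of the argument is to show that this $u$ is not $k$-plongeable. Assume for contradiction that $u\in R_u(P)(k)$ for a proper $k$-parabolic $P$. Using the reformulation above, after a $G(k)$-conjugation and a choice of noncentral $\lambda\in Y_k(G)$, this says that all root coordinates of the conjugate of $u$ lie in root groups $U_\zeta$ with $\langle\zeta,\lambda\rangle>0$. I would then compute the effect of an arbitrary $g\in G(k)$ on the two distinguished coordinates carrying $a$ and $b$, and extract from the requirement that the limit be $1$ an algebraic identity over $k$. The design of $u$ is meant to force this identity to express a nontrivial $\mathbb{F}_p$-combination of $a$ and $b$ as a $p$-th power in $k$, contradicting the $p$-independence of $a$ and $b$ guaranteed by $[k:k^p]\ge p^2$. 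Hence no such $P$ exists and $u$ is $k$-nonplongeable.

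The main obstacle is this last step: one must control the entire $G(k)$-orbit of $u$ at once, rather than a single Borel position, and convert plongeability into a genuine $p$-power relation in $k$. Over $\bar k$ the contracting direction does exist (it needs $a^{1/p}$ and $b^{1/p}$), so the argument has to be sharp enough to detect that passing to $k$-points alone removes it; this is exactly where both hypotheses — that $p$ is bad and that $[k:k^p]\ge p^2$ — are consumed. The cleanest device for making the contradiction uniform in $g$ is an invariant that is constant on $G(k)$-orbits: for instance a suitable semi-invariant on a representation of $G$, or the valuation-type quantity attached to the optimal cocharacter of $u$ in the sense of Borel--Tits~\cite{Borel-Tits-unipotent-invent}, evaluated on the $k$-rational witness $\lambda$.
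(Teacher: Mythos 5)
There is a genuine gap, and it sits exactly where you yourself locate the ``heart of the argument.'' Note first that the paper does not prove Proposition~\ref{semisimple-bad} at all: it is quoted as a deep theorem of Tits (\cite[Thm.~7]{Tits-Note3-CollegeFrance}), proved by Gille-level machinery and explicit case analysis, not rederived here. What you have written is a plan for reproving Tits's theorem, in which the two steps carrying all the content remain declarations of intent rather than mathematics. First, the element $u$ is never defined: ``a short product of root elements along a chain of roots running from $\alpha_j$ up toward $\tilde\alpha$, with the parameters $a$ and $b$ inserted at two distinguished root groups'' specifies neither the roots, nor the chain, nor the placement of $a$ and $b$; and the assertion that badness of $p$ makes ``the structure constant governing this chain'' vanish mod $p$ is not established (badness is a condition on the coefficients of the highest root; converting it into a statement about particular Chevalley commutator constants is a real step, and, as you note, it cannot be done inside a rank-two subsystem for $E_8$, $p=5$, so your construction is not even uniform across the cases it must cover). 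Second, and more seriously, the non-plongeability verification is described only by its desired outcome: ``the design of $u$ is meant to force this identity to express a nontrivial $\mathbb{F}_p$-combination of $a$ and $b$ as a $p$-th power in $k$.'' Nothing in the proposal produces that identity, controls an arbitrary $g\in G(k)$ (by Bruhat cells or otherwise), or defines the orbit invariant you invoke at the end. The Kempf--Rousseau optimal cocharacter you gesture at is precisely the tool whose $k$-rationality is problematic over nonperfect fields, so it cannot be used off the shelf; making it work here is essentially the difficulty of the theorem.

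Your preliminary reduction is fine: for split $G$, $u$ is $k$-plongeable if and only if some $G(k)$-conjugate of $u$ lies in $\prod_{\langle\zeta,\lambda\rangle>0}U_\zeta(k)$ for some noncentral $\lambda\in Y_k(G)$, and $[k:k^p]\geq p^2$ does supply two $p$-independent elements $a,b$. This is the right starting point, and it is the same flavor as Example~\ref{PGL-nonplongeable} --- though beware that $PGL_2$ is adjoint and $A_1$ has no bad primes, so the mechanism there does not transfer to the simply connected, bad-prime setting of this proposition (indeed every unipotent element of $SL_2(k)$ is plongeable). But a correct framing plus an unexecuted construction is not a proof. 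In the context of this paper the honest treatment is the citation to Tits; a self-contained argument would require fixing a concrete case (say $Sp_4$ or $G_2$ in characteristic $2$), writing down $u$ explicitly, and carrying out the analysis of all $G(k)$-conjugates against all standard parabolics --- a computation of the same kind as, but harder than, the $G$-irreducibility arguments in Section~4.
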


We quickly review an example of abelian $H<G$ such that $H$ is $G$-cr over $k$ but not $G$-cr. Although this example is known, it has not been interpreted in the context of $G$-complete reducibility. 

\begin{example}\label{PGL-nonplongeable}
Let $k$ be a nonperfect field of characteristic $p=2$. Let $a\in k\backslash k^2$. Let $G/k=PGL_2$. We write $\bar A$ for the image in $PGL_2$ of $A\in GL_2$. Set $u= \overline{\left[\begin{array}{cc}
                        0 & a \\
                        1 & 0 \\
                         \end{array}  
                        \right]}\in G(k)$. Let $U:=\langle u \rangle$. Then $U$ is unipotent, so by the classical result of Borel-Tits~\cite[Prop.~3.1]{Borel-Tits-unipotent-invent} $U$ is contained in the unipotent radical of a proper parabolic subgroup of $G$. So $U$ is not $G$-cr. However $U$ is not contained in any proper $k$-parabolic subgroup of $G$ since there is no nontrivial $k$-defined flag of $\mathbb{P}^1_k$ stabilized by $U$. So $U$ is $G$-ir over $k$, hence $G$-cr over $k$. Note that this example shows that~\cite[Prop.~3.1]{Borel-Tits-unipotent-invent} fails over a nonperfect $k$.
\end{example}

\begin{proof}[Proof of Proposition~\ref{pseudoreductive}]
Let $k$ be a nonperfect field of characteristic $2$. Let $a\in k\backslash k^2$. Let $G=PGL_2$ and $H:=\left\{\overline{\left[\begin{array}{cc}
                                                      x & ay \\
                                                      y & x \\
                                                    \end{array}  
                                             \right]} \in PGL_2(\bar k) \mid x, y \in \bar k\right\}$. Then $H$ is a connected $k$-defined unipotent subgroup of $G$. Therefore $H$ is not pseudo-reductive. It is clear that $H$ contains a $k$-anisotropic unipotent element $\overline{\left[\begin{array}{cc}
                                                      0 & a \\
                                                      1 & 0 \\
                                                    \end{array}  
                                             \right]}$ of $G$. So $H$ is $G$-ir over $k$.    
\end{proof}
\begin{rem}
Let $k$, $a$, $G$, $H$ be as in the proof of Proposition~\ref{pseudoreductive}. Note that the subgroup $H$ is the centralizer of the subgroup $U:=\left\langle \overline{\left[\begin{array}{cc}
                                                      0 & a \\
                                                      1 & 0 \\
                                                    \end{array}  
                                             \right]} \right\rangle$ of $G$. So without the perfectness assumption of $k$ we have a counterexample to~\cite[Prop.~3.12]{Bate-geometric-Inventione} which states that the centralizer of a $G$-cr over $k$ subgroup is reductive. Reducitivity of the centralizer was a key ingredient in the proof of~\cite[Cor.~3.17]{Bate-geometric-Inventione}. Although our example does not give a negative answer to Open Problem~\ref{centralizerquestion}, it suggests that the answer is no.  
\end{rem}

\subsection{Complete reducibility under isogenies}

\begin{proof}[Proof of Proposition~\ref{isogeny}]
Suppose that $f(H_1)$ is contained in a $k$-parabolic subgroup $P_{\mu}$ of $G_2$ where $\mu\in Y_k(G_2)$. Then $H_1<f^{-1}(P_{\mu})=P_{\lambda}$ for some $\lambda\in Y_k(G_1)$ since $f^{-1}(P_{\mu})$ is a $k$-defined parabolic subgroup of $G_1$ by~\cite[Thm.~22.6]{Borel-AG-book}. So $P_{\mu}=f(P_{\lambda})=P_{f\circ\lambda}$ by~\cite[Lem.~2.11]{Bate-geometric-Inventione}. 
Since $H_1$ is $G_1$-cr over $k$, there exists a $k$-Levi subgroup $L$ of $P_{\lambda}$ containing $H_1$. We can set $L:=u\cdot L_{\lambda}$ for some $u\in R_u(P_\lambda)(k)$ since $k$-Levi subgroups of $P_{\lambda}$ are $R_u(P_{\lambda})(k)$-conjugate by~\cite[Prop.~20.5]{Borel-AG-book}. Then $f(H_1)<f(u)\cdot f(L_\lambda)
=f(u)\cdot L_{f\circ \lambda}$ by~\cite[Lem.~2.11]{Bate-geometric-Inventione}. Since $f\circ \lambda$ is a $k$-cocharacter of $G_2$ and $f(u)$ is a $k$-point of $f(R_u(P_\lambda))=R_u(P_{f\circ \lambda})$ (\cite[Lem.~2.11]{Bate-geometric-Inventione}), $f(u)\cdot L_{f\circ \lambda}$ is a $k$-Levi subgroup of $P_{f\circ\lambda}=P_{\mu}$ containing $f(H_1)$. So we have the first part of the proposition.

Now, suppose that there exists a $k$-parabolic subgroup $P_{\lambda'}$ of $G_1$ containing $f^{-1}(H_2)$ where $\lambda'\in Y_k(G_1)$. Then there exists some $\mu'\in Y_k(G_2)$ such that $P_{\lambda'}=f^{-1}(P_{\mu'})$ since every $k$-parabolic subgroup of $G_1$ is the inverse image of a $k$-parabolic subgroup of $G_2$ by~\cite[Thm.~22.6]{Borel-AG-book}. So $H_2<P_{\mu'}$. Since $H_2$ is $G_2$-cr over $k$, there exists a $k$-Levi subgroup $L'$ of $P_{\mu'}$ containing $H_2$. By the same argument as in the last paragraph, set $L':=u'\cdot L_{\mu'}=L_{u'\cdot \mu'}$ for some $u'\in R_u(P_{\mu'})(k)$. Then $f^{-1}(H_2)<f^{-1}(L_{u'\cdot \mu'})<f^{-1}(P_{u'\cdot \mu'})=f^{-1}(P_{\mu'})=P_{\lambda'}$. Note that $f^{-1}(L_{u'\cdot \mu'})$ is a Levi subgroup of $f^{-1}(P_{u'\cdot \mu'})=P_{\lambda'}$ by~\cite[Lem.~2.11]{Bate-geometric-Inventione}, and it is $k$-defined by~\cite[Cor.~22.5]{Borel-AG-book} since $L_{u'\cdot \mu'}$ is a $k$-defined subgroup of $G_2$ containing a maximal torus of $G_2$. We are done.
\end{proof}

Note that if $k=\bar{k}$, Proposition~\ref{isogeny} holds without assuming $f$ central, but if $k$ is nonperfect, the next example shows that the first part of Proposition~\ref{isogeny} does not necessarily hold:
\begin{example}\label{counterexample}
Let $k$ be a nonperfect field of characteristic $2$. Let $a\in k\backslash k^2$. Let $G_1=G_2=PGL_2$, and $f$ be the Frobenius map. Let $h_1=
\small \overline{\left[\begin{array}{ll}
         0 & a \\
         1 & 0 \\
         \end{array} 
   \right]}$. \normalsize Then it is clear that $H_1:=\langle h_1 \rangle$ is $G_1$-ir over $k$, but $H_2:=\langle f(h_1) \rangle=
\small \left\langle\overline{\left[\begin{array}{ll}
         0 & a^2 \\
         1 & 0 \\
         \end{array} 
    \right]}\right\rangle$ \normalsize is not $G_2$-cr over $k$; $H_2$ acts on $\mathbb{P}^1_k$ with a $k$-defined $H_2$-invariant subspace spanned by $[a,1]$ which has no $k$-defined $H_2$-invariant complementary subspace. 
\end{example}
\begin{rem}
Let $f:SL_2\rightarrow PGL_2$ be the canonical projection. Take the same $H_1$ as in Example~\ref{counterexample}. Then $f^{-1}(H_1)=\left\langle \small \left[\begin{array}{ll}
         0 & \sqrt{a} \\
         \sqrt{a}^{-1} & 0 \\
         \end{array} 
   \right]\right\rangle$ is not $k$-defined, but $f^{-1}(H_1)$ is $G$-ir over $k$. 
\end{rem}

\begin{oprob}
Does the second part of Proposition~\ref{isogeny} hold without assuming $f$ central?
\end{oprob}

\section{Proof of Theorem~\ref{G-cr over k non G-cr}}
For the rest of the paper, we assume $k=k_s$ is a nonperfect field of characteristic $2$ and $a\in k\backslash k^2$. 

\subsection{The $G_2$ example}
Let $G/k$ be a simple algebraic group of type $G_2$. Fix a $k$-split maximal torus $T$ of $G$ and a $k$-Borel subgroup of $G$ containing $T$. Let $\Sigma=\{ \alpha, \beta\}$ be the set of simple roots corresponding to $B$ and $T$ where $\alpha$ is short and $\beta$ is long. Then the set of roots of $G$ is $\Psi=\{\pm \alpha, \pm \beta, \pm (\alpha+\beta), \pm (2\alpha+\beta), \pm(3\alpha+\beta), \pm(3\alpha+2\beta)\}$. Let $b\in k^*$ such that $b^3=1$ and $b\neq 1$. Let $n_\alpha:=\epsilon_{-\alpha}(1)\epsilon_\alpha(1)\epsilon_{-\alpha}(1)$, and $t:=\alpha^{\vee}(b)$. Let $L_\alpha:=\langle T, G_\alpha \rangle$ and $P_\alpha:=\langle L_\alpha, U_\beta, U_{\alpha+\beta}, U_{2\alpha+\beta}, U_{3\alpha+\beta}, U_{3\alpha+2\beta}\rangle=P_{(3\alpha+2\beta)^{\vee}}$.  

In the following computation, we use the commutation relations for root subgroups of $G$; see~\cite[Sec.~33.5]{Humphreys-book1}. Define 
\begin{equation*}
K:=\langle n_\alpha \rangle, \; v(\sqrt a):=\epsilon_{-\beta}(\sqrt a)\epsilon_{-3\alpha-\beta}(\sqrt a),\; M:=\langle n_\alpha, t \rangle.
\end{equation*}

Let
\begin{equation*}
H:=\langle v(\sqrt a)\cdot M, \; \epsilon_{2\alpha+\beta}(1)\rangle =\langle n_\alpha \epsilon_{-3\alpha-2\beta}(a), t, \epsilon_{2\alpha+\beta}(1) \rangle.
\end{equation*}

\begin{prop}\label{G2ir}
$H$ is $G$-ir over $k$.
\end{prop}
\begin{proof}
Let
\begin{alignat*}{2}
\widetilde H:&=v(\sqrt a)^{-1}\cdot H = \langle n_\alpha, t, \epsilon_{2\alpha+\beta}(1)\epsilon_{3\alpha+\beta}(a)\epsilon_{3\alpha+2\beta}(\sqrt a)\epsilon_{\alpha+\beta}(\sqrt a)\epsilon_{-\alpha}(\sqrt a)\rangle. 
\end{alignat*}

It is clear that $L_\alpha$ is a Levi subgroup of $G$ containing $M$. Since $M$ is not contained in any Borel subgroup of $L_\alpha$, $M$ is $L$-ir. So $M$ is $G$-cr by Lemma~\ref{G-cr-L-cr}. 

We see that $P_\alpha$ is a proper parabolic subgroup of $G$ containing $\widetilde H$. Let $P$ be a proper parabolic subgroup of $G$ containing $\widetilde H$. We show that $P=P_\alpha$. Let $\lambda\in Y_{\bar k}(G)$ such that $P_\lambda=P$.
Then $P$ contains $M$. Since $M$ is $G$-cr, $M$ is contained in some Levi subgroup $L$ of $P$. Since any Levi subgroup $L$ of $P$ can be expressed as $L=C_G(u\cdot \lambda(\bar k^*))$ for some $u\in R_u(P_\lambda)$, we may assume that $\lambda(\bar k^*)$ centralizes $M$. From~\cite[Lem.~7.10]{Bate-separability-TransAMS}, we know that $C_G(M)=G_{3\alpha+2\beta}$. So we can write $\lambda$ as
$
\lambda=g\cdot (3\alpha+2\beta)^{\vee} \textup{ for some } g\in G_{3\alpha+2\beta}.
$
By the Bruhat decomposition, $g$ is in one of the following forms:
\begin{alignat*}{2}
&(1)\; g=(3\alpha+2\beta)^{\vee}(s) \epsilon_{3\alpha+2\beta}(x_1), \\
&(2)\; g=\epsilon_{3\alpha+2\beta}(x_1) n_{3\alpha+2\beta} (3\alpha+2\beta)^{\vee}(s)  \epsilon_{3\alpha+2\beta}(x_2)\\
& \textup{ for some } s\in \bar k^*, x_1, x_2\in \bar k.   
\end{alignat*}
We rule out the second case. Suppose that $g$ is in form $(2)$. Since $\widetilde H<P_\lambda=P_{g\cdot (3\alpha+2\beta)^{\vee}} = g\cdot P_{(3\alpha+2\beta)^{\vee}}=g\cdot P_\alpha$, it is enough to show that $g^{-1}\cdot \widetilde H \not\subset P_{\alpha}$. Let
\begin{equation*}
h:=\epsilon_{2\alpha+\beta}(1)\epsilon_{3\alpha+\beta}(a)\epsilon_{3\alpha+2\beta}(\sqrt a)\epsilon_{\alpha+\beta}(\sqrt a)\epsilon_{-\alpha}(\sqrt a)\in \widetilde H.
\end{equation*}
We show that $g^{-1}\cdot h\notin P_\alpha$. Since $h$ centralizes $U_{3\alpha+2\beta}$ and $(3\alpha+2\beta)^{\vee}(s)\epsilon_{3\alpha+2\beta}(x_2)$ belongs to $P_\alpha$ for any $s\in \bar k^*$, $x_2\in \bar k$, without loss, we assume $g=n_{3\alpha+2\beta}$. We compute
\begin{alignat*}{2}
n_{3\alpha+2\beta}^{-1}\cdot h=& (n_\beta n_\alpha n_\beta n_\alpha n_\beta) \cdot h \\
                    =& \epsilon_{-\alpha-\beta}(1)\epsilon_{-\beta}(a)\epsilon_{-3\alpha-2\beta}(\sqrt a)\epsilon_{-2\alpha-\beta}(\sqrt a)
                          \epsilon_{-\alpha}(\sqrt a)\notin P_{\alpha}.
\end{alignat*}

So $g$ must be in form $(1)$ above. Then $g\in P_\alpha$ and $P_\lambda=P_{\alpha}$. Thus we have shown that $P_\alpha$ is the unique proper parabolic subgroup of $G$ containing $\widetilde H$. Since $H<v(\sqrt a)\cdot P_\alpha$, we have 

\begin{lem}\label{G2unique}
$v(\sqrt a)\cdot P_\alpha$ is the unique proper parabolic parabolic subgroup containing $H$.
\end{lem}

\begin{lem}\label{G2nonkdefined}
$v(\sqrt a)\cdot P_\alpha$ is not $k$-defined.
\end{lem}
\begin{proof}
Suppose that $v(\sqrt a)\cdot P_\alpha$ is $k$-defined. Since $P_\alpha$ is $k$-defined, $v(\sqrt a)\cdot P_\alpha$ is $G(k)$-conjugate to $P_\alpha$ by~\cite[Thm.~20.9]{Borel-AG-book}. So we can write $g v(\sqrt a)\cdot P_\alpha=P_\alpha$ for some $g\in G(k)$. Then $g v(\sqrt a)\in P_\alpha$ since parabolic subgroups are self-normalizing. Thus $g=p v(\sqrt a)^{-1}$ for some $p\in P_\alpha$. So $g$ is a $k$-point of $P_\alpha R_u(P_\alpha^{-})$. By the rational version of the Bruhat decomposition~\cite[Thm.~21.15]{Borel-AG-book}, there exist a unique $p'\in P_\alpha$ and a unique $u'\in R_u(P_\alpha^{-})$ such that $g=p'u'$; moreover $p'$ and $u'$ are $k$-points. This is a contradiction since $v(\sqrt a)^{-1}\notin R_u(P_\alpha^{-})(k)$. 
\end{proof}
Lemmas~\ref{G2unique} and \ref{G2nonkdefined} yield Proposition~\ref{G2ir}.
\end{proof}

\begin{prop}\label{G2nonGcr}
$H$ is not $G$-cr.
\end{prop}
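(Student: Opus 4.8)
The plan is to show that $H$ is not $G$-cr by exploiting the fact that $H$ contains a unipotent element (or a nontrivial unipotent normal subgroup) together with the self-normalizing behaviour of parabolic subgroups. The cleanest route is to work over $\bar k$ and pass to the conjugate $\widetilde H = v(\sqrt a)^{-1}\cdot H$, since $G$-complete reducibility is preserved under conjugation by elements of $G(\bar k)$. We already know from the proof of Proposition~\ref{G2ir} that $P_\alpha$ is the unique proper parabolic subgroup of $G$ (over $\bar k$) containing $\widetilde H$; equivalently, $v(\sqrt a)\cdot P_\alpha$ is the unique proper parabolic of $G$ containing $H$ by Lemma~\ref{G2unique}. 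The strategy is therefore: if $H$ were $G$-cr, then since $H \leq v(\sqrt a)\cdot P_\alpha$, it would be contained in some Levi subgroup $L$ of $v(\sqrt a)\cdot P_\alpha$, and I would derive a contradiction from the structure of $H$.

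First I would reduce to $\widetilde H$ and the parabolic $P_\alpha$. Suppose for contradiction that $\widetilde H$ is $G$-cr. Because $P_\alpha$ is the unique proper parabolic containing $\widetilde H$, $\widetilde H$ must lie in some Levi subgroup $L_\mu$ of $P_\alpha$, where $\mu$ is a cocharacter with $P_\mu = P_\alpha$. Every such Levi has the form $u\cdot L_{(3\alpha+2\beta)^\vee}$ for some $u \in R_u(P_\alpha)$, and by conjugating by $u^{-1}$ I may assume $\widetilde H \leq L_\alpha = L_{(3\alpha+2\beta)^\vee}$ after adjusting by an element of $R_u(P_\alpha)$. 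The key point is to track the unipotent part of the generator $h = \epsilon_{2\alpha+\beta}(1)\cdots \epsilon_{-\alpha}(\sqrt a)$: the factor $\epsilon_{2\alpha+\beta}(1)$ lies in $R_u(P_\alpha)$ and is nontrivial, so $\widetilde H$ has a nontrivial intersection with (or projects nontrivially to) the unipotent radical direction. I would argue that no conjugate of $\widetilde H$ by $R_u(P_\alpha)$ can fit inside a Levi, because the Levi $L_\alpha$ has roots only in $\{\pm\alpha\}$ (together with $T$), whereas $h$ genuinely involves root groups $U_{2\alpha+\beta}$, $U_{\alpha+\beta}$, etc., that are not absorbed by conjugating the $\pm\alpha$-part.

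The cleanest way to make this rigorous is via Borel--Tits together with Proposition~\ref{Normal}. The subgroup $H$ contains the unipotent element $\epsilon_{2\alpha+\beta}(1)$ as one of its generators; I would identify a nontrivial unipotent normal subgroup $N$ of $H$ (for instance the normal closure of a unipotent generator, or the image of $H$ in the abelianized unipotent radical) and show it is nontrivial. If $H$ were $G$-cr, then by Proposition~\ref{Normal} the normal subgroup $N$ would also be $G$-cr. But a nontrivial unipotent subgroup lying inside the unipotent radical of the (unique) proper parabolic $v(\sqrt a)\cdot P_\alpha$ cannot be $G$-cr: by the classical Borel--Tits result~\cite[Prop.~3.1]{Borel-Tits-unipotent-invent}, a $G$-cr unipotent group over $\bar k$ must be trivial, contradicting $N \neq 1$. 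This gives the contradiction and shows $H$ is not $G$-cr.

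The main obstacle I anticipate is verifying that $H$ genuinely has a nontrivial unipotent normal subgroup that is forced into $R_u(v(\sqrt a)\cdot P_\alpha)$, rather than being semisimple or spread across a Levi factor. Concretely, I must check that the generator $\epsilon_{2\alpha+\beta}(1)$ of $H$ (equivalently the non-toral, non-$n_\alpha$ part) does not become conjugate into a Levi subgroup under the available conjugations, and that the relevant commutators produce a nontrivial element inside the unipotent radical. This requires the $G_2$ commutation relations and a careful bookkeeping of which root groups appear; the danger is a miscalculation in which the unipotent contribution cancels. I would handle this by computing the image of $H$ under the projection $P_\alpha \twoheadrightarrow L_\alpha$ (killing $R_u(P_\alpha)$) and checking that the kernel of this projection meets $H$ nontrivially in a unipotent element, which pins down a nontrivial unipotent normal piece and completes the Borel--Tits argument.
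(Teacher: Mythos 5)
Your reduction itself --- ``if $H$ has a nontrivial unipotent normal subgroup $N$, then Proposition~\ref{Normal} plus Borel--Tits forces $H$ to be non-$G$-cr'' --- is logically sound, and it is a genuinely different route from the paper's: the paper never looks inside $H$ at all, but instead computes $C_G(H)=v(\sqrt a)\cdot U_{3\alpha+2\beta}$, observes that this centralizer is a nontrivial unipotent group, and concludes via \cite[Prop.~3.1]{Borel-Tits-unipotent-invent} and \cite[Cor.~3.17]{Bate-geometric-Inventione}. The gap in your proposal is that the hypothesis of your reduction is never satisfied: $H$ has \emph{no} nontrivial unipotent normal subgroup, and both of your candidate constructions fail. (a) The normal closure of $\epsilon_{2\alpha+\beta}(1)$ in $H$ is not unipotent. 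Indeed, $H$ contains $m:=n_\alpha\epsilon_{-3\alpha-2\beta}(a)$, so the normal closure contains $\epsilon_{2\alpha+\beta}(1)\cdot m\,\epsilon_{2\alpha+\beta}(1)m^{-1}$; applying the projection of $v(\sqrt a)\cdot P_\alpha$ onto its Levi quotient (which sends $\epsilon_{2\alpha+\beta}(1)$ to a conjugate of $\epsilon_{-\alpha}(\sqrt a)$ and $m$ to a conjugate of $n_\alpha$), this element maps to a conjugate of $\epsilon_{-\alpha}(\sqrt a)\epsilon_{\alpha}(\sqrt a)$, an element of $G_\alpha\cong SL_2$ of trace $a\neq 0$, hence not unipotent (in characteristic $2$, unipotent elements of $SL_2$ have trace $0$). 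Since homomorphisms of algebraic groups preserve unipotence, the normal closure contains non-unipotent elements; groups generated by unipotent elements need not be unipotent. (b) The kernel $H\cap R_u(v(\sqrt a)\cdot P_\alpha)$ of that projection is indeed a unipotent normal subgroup, but it is trivial, so it cannot serve either. The reason is structural: $t=\alpha^{\vee}(b)$ commutes with $v(\sqrt a)$, so conjugation by $v(\sqrt a)$ identifies $\langle \epsilon_{2\alpha+\beta}(1),t\rangle\cong A_4$ with $\langle h,t\rangle$, and every relation visible in the Levi image lifts; in particular the natural candidates $(ht)^3$ and $[h,tht^{-1}]$ are trivial. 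In fact $H$ is the amalgamated product $S_3\ast_{\mathbb{Z}/3}A_4$ of $v(\sqrt a)\cdot M$ and $\langle\epsilon_{2\alpha+\beta}(1),t\rangle$ over $\langle t\rangle$, and so is its image in the Levi quotient (ping-pong on the Bruhat--Tits tree of $SL_2$ over the field of Laurent series in $1/\sqrt a$), whence the projection is injective on $H$. So your stated plan --- ``checking that the kernel of this projection meets $H$ nontrivially in a unipotent element'' --- cannot succeed: no word in the generators lies in the kernel except the identity. (A nontrivial unipotent normal subgroup does appear in the Zariski closure $\overline H$, but proving \emph{that} requires Lemma~\ref{G2unique} together with a cohomological vanishing argument, i.e.\ it is harder than the statement being proved, and none of it is in your sketch.)

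The frustrating part is that your opening paragraph already contains everything needed, because once Lemma~\ref{G2unique} is available the proof is one line and needs no normal subgroups: if $H$ were $G$-cr, then since $H\leq v(\sqrt a)\cdot P_\alpha$ it would lie in some Levi subgroup $L$ of $v(\sqrt a)\cdot P_\alpha$, hence in the opposite parabolic subgroup $P^-$ with $v(\sqrt a)\cdot P_\alpha\cap P^-=L$; but $P^-$ is a proper parabolic subgroup of $G$ containing $H$ and distinct from $v(\sqrt a)\cdot P_\alpha$, contradicting the uniqueness in Lemma~\ref{G2unique}. Your second paragraph instead tries to rule out $\widetilde H\leq u\cdot L_\alpha$ directly by asserting that the root groups occurring in $h$ ``are not absorbed'' by conjugation; as stated this is not a proof, since conjugation by elements of $R_u(P_\alpha)$ does change which root groups occur in an expression (making this precise amounts to a coboundary computation over $C_{R_u(P_\alpha)}(M)$, which you do not carry out). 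So, to repair the argument, either switch to the paper's centralizer computation ($C_G(H)$ is unipotent, then \cite[Cor.~3.17]{Bate-geometric-Inventione}), or replace your third paragraph by the opposite-parabolic observation above.
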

\begin{proof}
Recall that $C_G(M) = G_{3\alpha+2\beta}$. Then $C_G(\widetilde H)<G_{3\alpha+2\beta}$ since $M<\widetilde H$. Using the commutation relations, we see that $U_{3\alpha+2\beta}<C_G(\widetilde H)$. Since $\langle 3\alpha+2\beta, (3\alpha+2\beta)^{\vee}\rangle=2$,  $(3\alpha+2\beta)^{\vee}(s)$ does not commute with $h\in \widetilde H$ for any $s\in \bar k^*\backslash\{1\}$. Then $C_G(\widetilde H)=U_{3\alpha+2\beta}$ since $G_{3\alpha+2\beta}=SL_2$. Thus $C_G(H)=v(\sqrt a)\cdot U_{3\alpha+2\beta}$ which is unipotent. So by~\cite[Prop.~3.1]{Borel-Tits-unipotent-invent}, $C_G(H)$ is not $G$-cr. Then~\cite[Cor.~3.17]{Bate-geometric-Inventione} shows that $H$ is not $G$-cr. 
\end{proof}

By Propositions~\ref{G2ir} and \ref{G2nonGcr} we are done.
\begin{rem}
In the proof of Proposition~\ref{G2ir}, $K$ acts non-separably on $R_u(P_\alpha^{-})$. This non-separable action was essential to make $v(\sqrt a)\cdot K$ $k$-defined; see~\cite[Sec.~7]{Bate-separability-TransAMS} for details. 
\end{rem}

\begin{rem}
Note that
\begin{alignat*}{2}
C_G(H) &= \{ v(\sqrt a)\cdot \epsilon_{3\alpha+2\beta}(x) \mid x\in \bar k \} \\
           &= \{ \epsilon_{3\alpha+2\beta}(x)\epsilon_{3\alpha+\beta}(\sqrt a x)\epsilon_{-\beta}(\sqrt a)\epsilon_{\beta}(\sqrt a x)\epsilon_{-\beta}(\sqrt a) \mid x \in \bar k\}\\
           &=\{ \epsilon_{3\alpha+2\beta}(a^{-1})\cdot (\epsilon_{-\beta}(\sqrt a)\epsilon_{\beta}(\sqrt a x)\epsilon_{-\beta}(\sqrt a) \mid x \in \bar k\}
\end{alignat*}
We can identify $\epsilon_{-\beta}(\sqrt a)\epsilon_\beta(\sqrt a x) \epsilon_{-\beta}(\sqrt a)$ with the product of $2\times 2$ matrices in $L_{\beta}= SL_2$:
\begin{alignat*}{2}
\epsilon_{-\beta}(\sqrt a)\epsilon_\beta(\sqrt a x) \epsilon_{-\beta}(\sqrt a)=&  
                       \left[\begin{array}{cc}
                        1 & 0 \\
                        \sqrt a & 1 \\
                         \end{array}  
                        \right]
                        \left[\begin{array}{cc}
                        1 & \sqrt a x \\
                        0 & 1 \\
                         \end{array}  
                        \right]
                         \left[\begin{array}{cc}
                        1 & 0 \\
                        \sqrt a& 1 \\
                         \end{array}  
                        \right]\\
                 =&   \left[\begin{array}{cc}
                        1+ax & \sqrt a x \\
                        a\sqrt a x& 1+ax \\
                         \end{array}  
                        \right].
\end{alignat*}
Then $C:=\{ \epsilon_{-\beta}(\sqrt a)\epsilon_\beta(\sqrt a x) \epsilon_{-\beta}(\sqrt a) \mid x\in \bar k\}$ is $L_\beta$-ir over $k$ since $C$ contains a $k$-anisotropic unipotent element $ \left[\begin{array}{cc}
                        1+a & \sqrt a \\
                        a \sqrt a & 1+a \\
                         \end{array}  
                        \right]$. Thus $C$ is $G$-cr over $k$ by Lemma~\ref{G-cr-L-cr}.
Since $C_G(H)$ is $G(k)$-conjugate to $C$, it is $G$-cr over $k$. Note that this agrees with Proposition~\ref{TCC-centralizer}.

\end{rem}

\subsection{The $E_6$ example}
Let $G/k$ be a simple algebraic group of type $E_6$. By Proposition~\ref{isogeny}, we may assume $G$ is simply-connected. Fix a maximal $k$-split torus $T$ of $G$ and a $k$-Borel subgroup $B$ of $G$ containing $T$.
Let $\Sigma = \{ \alpha,\beta,\gamma,\delta,\epsilon, \sigma \}$ be the set of simple roots of $G$ corresponding to $B$ and $T$. The next figure defines how each simple root of $G$ corresponds to each node in the Dynkin diagram of $E_6$. 
\begin{figure}[h]
                \centering
                \scalebox{0.7}{
                \begin{tikzpicture}
                \draw (1,0) to (2,0);
                \draw (2,0) to (3,0);
                \draw (3,0) to (4,0);
                \draw (4,0) to (5,0);
                \draw (3,0) to (3,1);
                \fill (1,0) circle (1mm);
                \fill (2,0) circle (1mm);
                \fill (3,0) circle (1mm);
                \fill (4,0) circle (1mm);
                \fill (5,0) circle (1mm);
                \fill (3,1) circle (1mm);
                \draw[below] (1,-0.3) node{$\alpha$};
                \draw[below] (2,-0.3) node{$\beta$};
                \draw[below] (3,-0.3) node{$\gamma$};
                \draw[below] (4,-0.3) node{$\delta$};
                \draw[below] (5,-0.3) node{$\epsilon$};
                \draw[right] (3.3,1) node{$\sigma$};
                \end{tikzpicture}}
\end{figure}

We label all positive roots of $G$ in Table~\ref{E6} in Appendix. The labeling for the negative roots follows in the obvious way. Let $L:=L_{\alpha\beta\gamma\delta\epsilon}=\langle T, U_i \mid i\in \{\pm 22,\cdots, \pm 36\}\rangle$. $P:=P_{\alpha\beta\gamma\delta\epsilon}=\langle L, U_i \mid i\in \{1,\cdots, 21\} \rangle$. Then $P$ is a parabolic subgroup of $G$ and $L$ is a Levi subgroup of $P$. Since our argument is similar to that of the $G_2$ example, we just give a sketch. We use the commutation relations~\cite[Lem.~32.5 and Prop.~33.3]{Humphreys-book1} repeatedly. Let 
\begin{alignat*}{2}
q_1:&=n_\alpha n_\beta n_\alpha,\; q_2:=n_\alpha n_\beta n_\gamma n_\beta n_\alpha n_\beta n_\epsilon,\; q_3:=n_\alpha n_\beta n_\alpha n_\delta n_\epsilon n_\delta,\\
q_4:&=n_\alpha n_\beta n_\gamma n_\delta n_\gamma n_\beta n_\alpha n_\gamma n_\delta n_\gamma,\;
q_5:=n_\beta n_\gamma n_\delta n_\epsilon n_\delta n_\gamma n_\beta n_\delta n_\epsilon n_\delta,\\
K:&=\langle q_1, q_2, q_3, q_4, q_5 \rangle<L, \; |K|=72.
\end{alignat*}
We took $q_1, \cdots, q_5$ from Table 1 (case 11) in \cite[Sec.~3]{Uchiyama-Classification-pre}. 
From the Cartan matrix of $E_6$~\cite[Sec.~11.4]{Humphreys-book2}, we see how $n_\alpha, \cdots, n_\epsilon$ act on $\Psi(R_u(P))$. 
Let $\pi:\langle n_\alpha, \cdots, n_\epsilon \rangle \rightarrow \textup{Sym}(\Psi(R_u(P)))\cong S_{21}$ be the corresponding homomorphism. Then
\begin{alignat}{2}
\pi(q_1)&=(2\; 5)(4\; 8)(7\; 11)(10\; 15)(13\; 17)(16\; 19), \nonumber\\
\pi(q_2)&=(1\; 5)(2\; 3)(4\; 17)(6\; 14)(7\; 15)(8\; 11)(9\; 12)(10\; 13)(16\; 20)(18\; 19), \nonumber\\
\pi(q_3)&=(2\; 11)(3\; 9)(4\; 8)(5\; 7)(10\; 19)(12\; 18)(13\; 17)(15\; 16), \nonumber\\
\pi(q_4)&=(1\; 4\; 8)(2\; 12\; 5)(3\; 10\; 15)(7\; 18\; 11)(9\; 16\; 19)(13\; 20\; 17),\nonumber\\
\pi(q_5)&=(1\; 9\; 3)(2\; 7\; 13)(4\; 16\; 10)(5\; 11\; 17)(8\; 19\; 15)(12\; 18\; 20). \label{reflectionE6-1}
\end{alignat}
The orbits of $K$ in $\Psi(R_u(P))$ are 
\begin{equation*}
O_{1}=\{21\},\; O_{2}=\{6, 14\},\; O_{3}=\Psi(R_u(P))\backslash \{6,14,21\}.
\end{equation*}
Let
\begin{alignat*}{2}
M':&=\langle U_i \mid i\in \{\pm 27, \pm 28, \pm 29, \pm 30\} \rangle<L, \\
M:&=\langle K, M' \rangle,\; v(\sqrt a):= \epsilon_{-6}(\sqrt a)\epsilon_{-14}(\sqrt a). 
\end{alignat*}
Note that $v(\sqrt a)$ centralizes $M'$. Define
\begin{equation*}
H:=\langle v(\sqrt a)\cdot M, \epsilon_{2}(1)\rangle= \langle q_1, q_2 \epsilon_{-21}(a), q_3, q_4, q_5, M', \epsilon_{2}(1) \rangle.
\end{equation*}

\begin{prop}\label{E6-1-example}
$H$ is $G$-ir over $k$.
\end{prop}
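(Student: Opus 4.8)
The plan is to follow the template established in the $G_2$ case (Proposition 4.3) verbatim, since the author has set up $M$, $K$, $v(\sqrt a)$, and $H$ in complete structural analogy. The strategy is to show two things: first, that there is a \emph{unique} proper parabolic subgroup of $G$ containing $H$, namely $v(\sqrt a)\cdot P$; and second, that this parabolic is \emph{not} $k$-defined. Together these force $H$ to be $G$-ir over $k$: any $k$-parabolic containing $H$ would be a $k$-defined proper parabolic, contradicting uniqueness once we know $v(\sqrt a)\cdot P$ is not $k$-defined.

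First I would pass to the conjugate $\widetilde H := v(\sqrt a)^{-1}\cdot H$ and observe that $L = L_{\alpha\beta\gamma\delta\epsilon}$ is a Levi subgroup of $G$ containing $M$, and that $M$ is $L$-irreducible (it is not contained in any proper parabolic of $L$), so $M$ is $G$-cr by Lemma~\ref{G-cr-L-cr}. Next, given any proper parabolic $P_\lambda = P$ containing $\widetilde H$, I would use the $G$-complete reducibility of $M$ to normalize $\lambda$ so that $\lambda(\bar k^*)$ centralizes $M$; this confines $\lambda$ to a cocharacter of $C_G(M)$. The central computation here is to identify $C_G(M)$ explicitly from the orbit data, the analogue of the $G_2$ identity $C_G(M)=G_{3\alpha+2\beta}$: the singleton orbit $O_1=\{21\}$ together with $M'$ should pin down $C_G(M)$ to a rank-one group $G_{\rho}$ (with $\rho$ the root labeled $21$, i.e.\ the highest root in $\Psi(R_u(P))$), so that $\lambda = g\cdot \rho^{\vee}$ for some $g\in G_\rho$. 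A Bruhat-decomposition dichotomy on $g$ then reduces to ruling out the ``$n_\rho$-factor'' case by exhibiting an explicit element $h\in\widetilde H$ (the image of $\epsilon_2(1)$ under the conjugation, carrying the $\sqrt a$-tails) with $n_\rho^{-1}\cdot h\notin P$; this is the same commutation-relation calculation as in $G_2$, now run through Table~\ref{E6}. This yields $P = P_{\alpha\beta\gamma\delta\epsilon}$ uniquely, hence $v(\sqrt a)\cdot P$ is the unique proper parabolic containing $H$.

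For the non-$k$-defined step I would copy the $G_2$ argument (Lemma~\ref{G2nonkdefined}) with no change of idea: if $v(\sqrt a)\cdot P$ were $k$-defined it would be $G(k)$-conjugate to the $k$-defined $P$, and then the uniqueness of the rational Bruhat decomposition~\cite[Thm.~21.15]{Borel-AG-book} relative to $P$ and $R_u(P^-)$ would force $v(\sqrt a)^{-1}\in R_u(P^-)(k)$, which is false because $v(\sqrt a)=\epsilon_{-6}(\sqrt a)\epsilon_{-14}(\sqrt a)$ has coordinates in $k\setminus k^2$. Combining the two lemmas gives $G$-irreducibility over $k$.

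The main obstacle I expect is the explicit determination of $C_G(M)$ in the $E_6$ setting and verifying $h\in\widetilde H$ together with the $n_\rho^{-1}\cdot h\notin P$ computation. In $G_2$ this was a short hand calculation backed by~\cite[Lem.~7.10]{Bate-separability-TransAMS}; here the bookkeeping runs over the $21$ positive roots of $R_u(P)$ with the permutation data \eqref{reflectionE6-1}, and one must check that $M'=\langle U_{\pm 27},\dots,U_{\pm 30}\rangle$ together with $K$ kills every root-direction in $C_G(M)$ except the line $U_{21}$ (and that $\rho^{\vee}=21^{\vee}$ does not centralize the $\sqrt a$-tail, via $\langle 21,\,21^{\vee}\rangle = 2$). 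The non-separability of the $K$-action on $R_u(P^-)$—recorded by the orbit $O_2=\{6,14\}$ on which $v(\sqrt a)$ is supported—is exactly what makes $\widetilde H$, and hence $H$, $k$-defined, so this orbit structure is doing the real work. Everything else is a faithful transcription of the $G_2$ proof.
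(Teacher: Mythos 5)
Your plan reproduces the paper's proof of Proposition~\ref{E6-1-example} essentially step for step: pass to $\widetilde H=v(\sqrt a)^{-1}\cdot H$, show $M$ is $G$-cr via the Levi $L$ (the paper does this by identifying $[L,L]=SL_6$ and checking $[L,L]$-irreducibility), pin down $C_G(M)^{\circ}=G_{21}$ using the orbit/permutation data and the tori $T_1,\dots,T_4$, run the Bruhat dichotomy on $g\in G_{21}$ and kill the $n_{21}$ case with $h=\epsilon_2(1)\epsilon_{-36}(\sqrt a)$, then rule out $k$-definedness of $v(\sqrt a)\cdot P$ by the rational Bruhat decomposition exactly as in Lemma~\ref{G2nonkdefined}. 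This is the same approach as the paper's, and your identification of the centralizer computation as the main burden is accurate.
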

\begin{proof}
Let 
\begin{equation*}
\widetilde H:=v(\sqrt a)^{-1}\cdot H = \langle M, v(\sqrt a)^{-1}\cdot \epsilon_2(1) \rangle 
            = \langle M, \epsilon_2(1)\epsilon_{-36}(\sqrt a ) \rangle. 
\end{equation*}
Since $U_{-36}<L$, we see that $P$ contains $\widetilde H$. Thus $v(\sqrt a)\cdot P$ contains $H$.
\begin{lem}\label{E6-1-unique}
$v(\sqrt a)\cdot P$ is the unique proper parabolic subgroup of $G$ containing $H$.
\end{lem}
\begin{proof}
It is clear that $M$ is contained in $L$. Note that $[L,L]=SL_6$. We identify $n_\alpha, n_\beta, n_\gamma, n_\delta, n_\epsilon$ with $(1\;2)$, $(2\;3)$, $(3\;4)$, $(4\;5)$, $(5\;6)$ in $S_6$. Then $q_1=(1\;3)$, $ q_2=(1\;4)(2\;3)(5\;6)$,  $q_3=(1\;3)(4\;6)$, $q_4=(1\;5\;3)$ and $q_5=(2\;6\;4)$. Let $T_1:=(\alpha+\beta)^{\vee}(\bar k^*), T_2:=(\beta+\gamma)^{\vee}(\bar k^*), T_3:=(\gamma+\delta)(\bar k^*)$, and $T_4:=(\delta+\epsilon)(\bar k^*)$. Then $T_i$ is a maximal torus of $G_i$ for $i = 27,28,29$ and $30$ respectively. So $\langle T_1, T_2, T_3, T_4 \rangle<M$. 
Now a simple matrix calculation shows that $M$ is $[L,L]$-ir, hence $L$-cr by~\cite[Prop.~2.8]{Bate-commuting-Crelle}. Thus $M$ is $G$-cr by Lemma~\ref{G-cr-L-cr}. 

Let $P_\lambda$ be a proper parabolic subgroup of $G$ containing $\widetilde H$. Then $P_\lambda$ contains $M$. Since $M$ is $G$-cr, without loss we may assume that $\lambda(\bar k^*)$ centralizes $M$. Recall  that by~\cite[Thm.~13.4.2]{Springer-book}, 
$C_{R_u(P)}(M)^\circ\times C_L(M)^{\circ}\times C_{R_u(P^{-})}(M)^{\circ}$ is an open set of $C_G(M)^{\circ}$ where $P^{-}$ is the opposite of $P$ containing $L$. 
\begin{lem}\label{E6-1centralizer}
$C_G(M)^{\circ}=G_{21}$.
\end{lem}
\begin{proof}
First of all, from equations (\ref{reflectionE6-1}), we see that $K$ centralizes $G_{21}$. Using the commutation relations~\cite[Lem.~32.5 and Prop.~33.3]{Humphreys-book1}, $M'$ centralizes $G_{21}$. So $M$ centralizes $G_{21}$. By~\cite[Prop.~8.2.1]{Springer-book}, we write an arbitrary element $u$ of $R_u(P)$ as $u=\prod_{i=1}^{21}\epsilon_i(x_i)$ for some $x_i\in \bar k$. It is not hard to show that if $u\in C_{R_u(P)}(T_1, T_2, T_3, T_4)$, $u$ must be of the form
\begin{equation*}
u=\epsilon_{6}(x_6)\epsilon_{14}(x_{14})\epsilon_{21}(x_{21}) \textup{ for some }x_i\in \bar k. 
\end{equation*}
Then
\begin{alignat*}{2}
q_2\cdot u =& \epsilon_{14}(x_6)\epsilon_{6}(x_{14})\epsilon_{21}(x_{21}) \\
                =& \epsilon_{6}(x_{14})\epsilon_{14}(x_6)\epsilon_{21}(x_{6}x_{14}+x_{21}).
\end{alignat*}
So, for $u\in C_{R_u(P)}(M)$, $x_6=x_{14}=0$. Thus $C_{R_u(P)}(M)=U_{21}$. Likewise $C_{R_u(P^{-})}(M)=U_{-21}$. Note that $C_L(M)<C_L(T_1, T_2, T_3, T_4)$. We find by direct computations that $C_L(T_1, T_2, T_3, T_4)=T$ and $C_T(K)=(\alpha+2\beta+3\gamma+2\delta+\epsilon+2\sigma)^{\vee}(\bar k^*)<G_{21}$. So we are done.
\end{proof}
Now we have $\lambda(\bar k^*)< G_{21}$. Without loss, set $\lambda=g\cdot (\alpha+2\beta+3\gamma+2\delta+\epsilon+2\sigma)^{\vee}$ for some $g\in G_{21}$. By the Bruhat decomposition, $g$ is in one of the following forms:
\begin{alignat*}{2}
&(1)\; g = (\alpha+2\beta+3\gamma+2\delta+\epsilon+2\sigma)^{\vee}(s)\epsilon_{21}(x_1), \\
&(2)\; g = \epsilon_{21}(x_1)n_{21}(\alpha+2\beta+3\gamma+2\delta+\epsilon+2\sigma)^{\vee}(s)\epsilon_{21}(x_2)\\
& \textup{ for some }x_1, x_2\in \bar k, s\in \bar k^{*}. 
\end{alignat*}
By the similar argument to that of the $G_2$ case, if we rule out the second case we are done. Suppose that $g$ is in form $(2)$. Let $h:=\epsilon_2(1)\epsilon_{-36}(\sqrt a )\in \widetilde H$. It is enough to show that $g^{-1}\cdot h \not\subset P_{(\alpha+2\beta+3\gamma+2\delta+\epsilon+2\sigma)^{\vee}}$. Since $h$ centralizes $U_{21}$ and $\epsilon_{21}(x_2)(\alpha+2\beta+3\gamma+2\delta+\epsilon+2\sigma)^{\vee}(s)$ belongs to $P_{(\alpha+2\beta+3\gamma+2\delta+\epsilon+2\sigma)^{\vee}}$ for any $x_2\in \bar k, s\in \bar k^*$, we may assume $g=n_{21}$. We have
\begin{alignat*}{2}
n_{21}=&n_{\epsilon}n_{\sigma}n_{\delta}n_{\epsilon}n_{\gamma}n_{\sigma}n_{\delta}n_{\epsilon}n_{\gamma}
                      n_{\delta}n_{\beta}n_{\gamma}n_{\sigma}n_{\delta}n_{\epsilon}n_{\gamma}n_{\delta}n_{\beta}
                      n_{\gamma}n_{\sigma}n_{\alpha}n_{\beta}
                      n_{\gamma}n_{\sigma}n_{\delta}n_{\epsilon}
                      n_{\gamma}n_{\delta}n_{\beta}n_{\gamma}\\
                      &n_{\sigma}n_{\alpha}n_{\beta}n_{\gamma}
                      n_{\delta}n_{\epsilon} \textup{ (the longest element in the Weyl group of $E_6$). }
\end{alignat*}
A quick calculation shows $n_{21}\cdot U_2 = U_{-2}$ and $n_{21}\cdot U_{-36}=U_{36}$. Then
\begin{equation*}
n_{21}^{-1}\cdot (\epsilon_2(1)\epsilon_{-36}(\sqrt a ))=\epsilon_{-2}(1)\epsilon_{36}(\sqrt a)\not\in P_{(\alpha+2\beta+3\gamma+2\delta+\epsilon+2\sigma)^{\vee}}.
\end{equation*}
So we are done.
\end{proof}
\begin{lem}\label{E6-1-nonkdefined}
$v(\sqrt a)\cdot P$ is not $k$-defined.
\end{lem}
\begin{proof}
This is similar to Lemma~\ref{G2nonkdefined}.
\end{proof}
\end{proof}

\begin{prop}\label{E6nonG-cr}
$H$ is not $G$-cr.
\end{prop}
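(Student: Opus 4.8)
The plan is to mirror the structure of Proposition~\ref{G2nonGcr} exactly, exploiting the fact that the whole $E_6$ construction was engineered to make $C_G(H)$ unipotent, which then forces non-$G$-complete-reducibility via the standard argument of~\cite[Cor.~3.17]{Bate-geometric-Inventione}. First I would compute the centralizer of $\widetilde H$. By Lemma~\ref{E6-1centralizer} we already know $C_G(M)^\circ = G_{21}$, so since $M < \widetilde H$ we have $C_G(\widetilde H)^\circ < G_{21} = SL_2$. The extra generator of $\widetilde H$ over $M$ is $h = \epsilon_2(1)\epsilon_{-36}(\sqrt a)$, so I would determine which elements of $G_{21}$ commute with $h$. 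Using the commutation relations, $U_{21}$ should centralize $\widetilde H$ (just as $U_{3\alpha+2\beta}$ did in the $G_2$ case), while the torus part $(\alpha+2\beta+3\gamma+2\delta+\epsilon+2\sigma)^\vee(s)$ acts on $h$ nontrivially unless $s=1$ because the relevant pairing $\langle 21, 21^\vee\rangle = 2 \neq 0$ in characteristic $2$ — here I must double check the sign/pairing carefully since we are in characteristic $2$. The conclusion should be $C_G(\widetilde H) = U_{21}$.

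Granting that, I would transport back by conjugation: since $H = v(\sqrt a)\cdot \widetilde H$, we get $C_G(H) = v(\sqrt a)\cdot U_{21}$, which is a one-dimensional unipotent group. I would then invoke~\cite[Prop.~3.1]{Borel-Tits-unipotent-invent}: any unipotent subgroup over $\bar k$ lies in the unipotent radical of a proper parabolic subgroup, so $C_G(H)$ is not $G$-cr (over $\bar k$). Finally,~\cite[Cor.~3.17]{Bate-geometric-Inventione} states that if $H$ is $G$-cr then $C_G(H)$ is $G$-cr; contrapositively, since $C_G(H)$ is not $G$-cr, $H$ cannot be $G$-cr. This is precisely the chain of implications used in Proposition~\ref{G2nonGcr}, and it closes the proof.

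The main obstacle I anticipate is the centralizer computation $C_G(\widetilde H) = U_{21}$, specifically pinning down $C_{G_{21}}(h)$. The torus computation is the delicate part: I must verify that $(\alpha+2\beta+3\gamma+2\delta+\epsilon+2\sigma)^\vee(s)$ fails to centralize $h$ for $s \neq 1$, which requires knowing the exact integer pairings $\langle 2, (\alpha+2\beta+3\gamma+2\delta+\epsilon+2\sigma)^\vee\rangle$ and $\langle 36, (\alpha+2\beta+3\gamma+2\delta+\epsilon+2\sigma)^\vee\rangle$ modulo the characteristic; a vanishing pairing would wreck the argument. I would read these off from Table~\ref{E6} together with the Cartan matrix of $E_6$. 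Everything else — the fact that $U_{21}$ centralizes $h$, and the passage to $C_G(H) = v(\sqrt a)\cdot U_{21}$ — should follow routinely from the commutation relations, exactly as in the $G_2$ model, so I would present those steps tersely and reserve the detail for the torus pairing.

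\begin{proof}
Recall from Lemma~\ref{E6-1centralizer} that $C_G(M)^{\circ}=G_{21}$. Since $M<\widetilde H$, we have $C_G(\widetilde H)<G_{21}$. Using the commutation relations~\cite[Lem.~32.5 and Prop.~33.3]{Humphreys-book1}, one checks that $U_{21}<C_G(\widetilde H)$. Since $\langle 21, 21^{\vee}\rangle=2$, the torus element $(\alpha+2\beta+3\gamma+2\delta+\epsilon+2\sigma)^{\vee}(s)$ does not commute with $h=\epsilon_2(1)\epsilon_{-36}(\sqrt a)\in \widetilde H$ for any $s\in \bar k^*\backslash\{1\}$. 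As $G_{21}=SL_2$, it follows that $C_G(\widetilde H)=U_{21}$. Hence $C_G(H)=v(\sqrt a)\cdot U_{21}$, which is unipotent. So by~\cite[Prop.~3.1]{Borel-Tits-unipotent-invent}, $C_G(H)$ is not $G$-cr. Then~\cite[Cor.~3.17]{Bate-geometric-Inventione} shows that $H$ is not $G$-cr.
\end{proof}
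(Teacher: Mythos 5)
Your overall strategy (compute the centralizer, show it is unipotent, invoke \cite[Prop.~3.1]{Borel-Tits-unipotent-invent} and then \cite[Cor.~3.17]{Bate-geometric-Inventione}) is the same as the paper's, but your proof has two genuine gaps, and both occur exactly where the $E_6$ case differs from the $G_2$ template you are copying. First, the pairing you cite is the wrong one. Whether $(\alpha+2\beta+3\gamma+2\delta+\epsilon+2\sigma)^{\vee}(s)$ centralizes $h=\epsilon_2(1)\epsilon_{-36}(\sqrt a)$ is governed by the pairings of the roots that actually occur in $h$, namely $2$ and $-36$; the value $\langle 21,21^{\vee}\rangle=2$ is irrelevant because root $21$ does not appear in $h$. (In the $G_2$ case the root $3\alpha+2\beta$ does appear in $h$ with coefficient $\sqrt a$, which is why the paper could quote $\langle 3\alpha+2\beta,(3\alpha+2\beta)^{\vee}\rangle=2$ there; you transplanted that step into a situation where it does not apply.) Moreover, $-36$ is a root of the Levi $L$, and every root of $L$ pairs to zero with $(\alpha+2\beta+3\gamma+2\delta+\epsilon+2\sigma)^{\vee}$, so the factor $\epsilon_{-36}(\sqrt a)$ detects nothing: the ``vanishing pairing'' you worried about in your plan actually happens for this root. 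The argument survives only because $\langle 2,(\alpha+2\beta+3\gamma+2\delta+\epsilon+2\sigma)^{\vee}\rangle=1$ (the paper records this as $\langle -2,(\alpha+2\beta+3\gamma+2\delta+\epsilon+2\sigma)^{\vee}\rangle=-1$), and this is precisely the computation your written proof needed and does not contain.

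Second, you conflate the centralizer with its identity component. Lemma~\ref{E6-1centralizer} asserts only $C_G(M)^{\circ}=G_{21}$ (its proof runs through the open cell of $C_G(M)^{\circ}$), in contrast with the $G_2$ case, where the full centralizer $C_G(M)=G_{3\alpha+2\beta}$ is known from \cite[Lem.~7.10]{Bate-separability-TransAMS}. From $M<\widetilde H$ you may therefore conclude only $C_G(\widetilde H)^{\circ}\leq G_{21}$, not $C_G(\widetilde H)\leq G_{21}$; accordingly the most your computation gives is $C_G(H)^{\circ}=v(\sqrt a)\cdot U_{21}$, and the assertion that $C_G(H)$ itself is unipotent is not established. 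Consequently \cite[Prop.~3.1]{Borel-Tits-unipotent-invent} shows only that $C_G(H)^{\circ}$ is not $G$-cr, and you cannot yet apply \cite[Cor.~3.17]{Bate-geometric-Inventione}. The paper bridges this with one more step that your proof is missing: $C_G(H)^{\circ}$ is a normal subgroup of $C_G(H)$, so if $C_G(H)$ were $G$-cr then $C_G(H)^{\circ}$ would be $G$-cr by Proposition~\ref{Normal} (applied over $\bar k$); hence $C_G(H)$ is not $G$-cr, and only then does \cite[Cor.~3.17]{Bate-geometric-Inventione} yield that $H$ is not $G$-cr. With the correct pairing inserted and this normality step added, your argument becomes the paper's proof.
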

\begin{proof}
This is similar to Proposition~\ref{G2nonGcr}. From Lemma~\ref{E6-1centralizer}, $C_G(H)^{\circ}<v(\sqrt a)\cdot G_{21}$. Using the commutation relations, $v(\sqrt a)\cdot U_{21}<C_G(H)$. Note that $\langle -2, (\alpha+2\beta+3\gamma+2\delta+\epsilon+2\sigma)^{\vee}\rangle =-1$. So, $(\alpha+2\beta+3\gamma+2\delta+\epsilon+2\sigma)^{\vee}(s)$ does not commute with $h$ for any $s\in \bar k^*\backslash\{1\}$. A similar argument to that of the $G_2$ case shows that $C_G(H)^{\circ}=v(\sqrt a)\cdot U_{21}$ which is unipotent. So by~\cite[Prop.~3.1]{Borel-Tits-unipotent-invent}, $C_G(H)^{\circ}$ is not $G$-cr. Then $C_G(H)$ is not $G$-cr by Proposition~\ref{Normal} since $C_G(H)^{\circ}$ is a normal subgroup of $C_G(H)$. Now~\cite[Cor.~3.17]{Bate-geometric-Inventione} shows that $H$ is not $G$-cr. 
\end{proof}
By Propositions~\ref{E6-1-example} and~\ref{E6nonG-cr}, we are done.

\begin{rem}
Note that $C_G(H)^{\circ}=\{ \epsilon_{21}(a^{-1})\cdot (\epsilon_{-6}(\sqrt a)\epsilon_{6}(\sqrt a x)\epsilon_{-6}(\sqrt a))\mid x\in \bar k \}$ which is $G$-cr over $k$ by the same argument as that of the $G_2$ example.
\end{rem}

\begin{rem}
One can obtain more examples satisfying Theorem~\ref{G-cr over k non G-cr} using nonseparable subgroups in \cite[Sec.~3,4,5]{Uchiyama-Classification-pre} for $G=E_6,E_7$, and $E_8$; see~\cite{Uchiyama-phdthesis}.
\end{rem}

\section{Proof of Theorem~\ref{non-connected G-cr over k non-G-cr}}

Let $\tilde G/k$ be a simple algebraic group of type $A_4$. Let $G:=\tilde G\rtimes \langle \sigma \rangle$ where $\sigma$ is  the non-trivial graph automorphism of $\tilde G$. Fix a maximal $k$-split torus $T$ and $k$-Borel subgroup $B$ of $G$ containing $T$. Define the set of simple roots $\{\alpha, \beta, \gamma, \delta \}$ of $G$ as in the following Dynkin diagram.
\begin{figure}[!h]
                \centering{
                 \scalebox{0.7}{
                \begin{tikzpicture}
                \draw (0,0) to (1,0);
                \draw (1,0) to (2,0);
                \draw (2,0) to (3,0);
                \fill (0,0) circle (1mm);
                \fill (1,0) circle (1mm);
                \fill (2,0) circle (1mm);
                \fill (3,0) circle (1mm);
                \draw[below] (0,-0.3) node{$\alpha$};
                \draw[below] (1,-0.3) node{$\beta$};
                \draw[below] (2,-0.3) node{$\gamma$};
                \draw[below] (3,-0.3) node{$\delta$};
                \end{tikzpicture}}
                }
\end{figure}
Let $\lambda=(\alpha+\beta+\gamma+\delta)^{\vee}$. Then 
\begin{alignat*}{2}
L_\lambda=&\langle T, G_{\beta}, G_{\gamma}, G_{\beta+\gamma}, \sigma \rangle, \\
P_\lambda=&\langle L, U_i \mid i\in \{\alpha, \delta, \alpha+\beta, \gamma+\delta, \alpha+\beta+\gamma, \beta+\gamma+\delta, \alpha+\beta+\gamma+\delta\} \rangle.
\end{alignat*} 
Let
\begin{alignat*}{2}
K:=&\langle \sigma \rangle, \; v(\sqrt a):=\epsilon_{-\alpha-\beta}(\sqrt a)\epsilon_{-\gamma-\delta}(\sqrt a), \\
M:=&\langle K, G_{\beta+\gamma} \rangle < L_\lambda. 
\end{alignat*}
Note that $v(\sqrt a)$ centralizes $G_{\beta+\gamma}$. Define
\begin{equation*}
H:=\langle v(\sqrt a)\cdot M, \epsilon_{\beta+\gamma+\delta}(1) \rangle. 
\end{equation*}

\begin{prop}
$H$ is $G$-cr over $k$, but not $G$-cr.
\end{prop}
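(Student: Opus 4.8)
The plan is to run the same three-step template used for the $G_2$ and $E_6$ examples, but to push the geometric statements down to the connected group $\tilde G=G^\circ$ whenever possible, since $v(\sqrt a)\in R_u(P_\lambda^-)\cap\tilde G$. Write $\theta:=\alpha+\beta+\gamma+\delta$ for the highest root, so that $\lambda=\theta^\vee$ and $G_\theta=\langle U_\theta,U_{-\theta}\rangle$. First I would form $\widetilde H:=v(\sqrt a)^{-1}\cdot H=\langle M,\ v(\sqrt a)^{-1}\cdot\epsilon_{\beta+\gamma+\delta}(1)\rangle$ and evaluate the conjugate with the commutation relations: because $v(\sqrt a)$ centralizes $G_{\beta+\gamma}$, only the generator $\epsilon_{\beta+\gamma+\delta}(1)$ moves, and since the single root produced is $(\beta+\gamma+\delta)+(-\gamma-\delta)=\beta$, one finds $v(\sqrt a)^{-1}\cdot\epsilon_{\beta+\gamma+\delta}(1)=\epsilon_{\beta+\gamma+\delta}(1)\epsilon_{\beta}(c\sqrt a)$ with $c\neq 0$. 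As $U_{\beta+\gamma+\delta}<R_u(P_\lambda)$ and $U_\beta<L_\lambda$, this shows $\widetilde H<P_\lambda$, hence $H<v(\sqrt a)\cdot P_\lambda$.

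The core is to prove that $P_\lambda$ is the unique proper $R$-parabolic of $G$ containing $\widetilde H$ over $\bar k$. I would first show $M=\langle\sigma,G_{\beta+\gamma}\rangle$ is $L_\lambda$-irreducible: any proper $R$-parabolic $P_\mu(L_\lambda)$ with $\mu\in Y(L_\lambda^\circ)$ containing $G_{\beta+\gamma}$ forces $\langle\beta+\gamma,\mu\rangle=0$, and for such $\mu$ one has $\sigma\in P_\mu$ iff $\sigma\cdot\mu=\mu$, because for a cocharacter into the maximal torus the limit $\lim_{a\to 0}\big((\sigma^{-1}\cdot\mu)-\mu\big)(a)$ exists only when $(\sigma^{-1}\cdot\mu)-\mu=0$; since $\sigma$ interchanges $\beta$ and $\gamma$, combining $\sigma\cdot\mu=\mu$ with $\langle\beta+\gamma,\mu\rangle=0$ gives $\langle\beta,\mu\rangle=0$, so $\mu$ is central and $P_\mu(L_\lambda)$ improper. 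Thus $M$ is $L_\lambda$-ir, hence $L_\lambda$-cr, hence $G$-cr by the non-connected analogue of Lemma~\ref{G-cr-L-cr} (\cite[Sec.~6]{Bate-geometric-Inventione}). Given any proper $R$-parabolic $P_\mu\supseteq\widetilde H$, $G$-completeness of $M$ lets me assume $\mu(\bar k^*)$ centralizes $M$; a commutation-relation computation parallel to Lemma~\ref{E6-1centralizer} then identifies $C_G(M)^\circ=G_\theta$. The point worth stressing here is that $\sigma$ is what cuts the centralizing torus down: on $T^\sigma$ one has $\gamma(t)=\beta(t)$, so $(\beta+\gamma)(t)=\beta(t)^2$, and $C_T(M)^\circ=\{t\in T^\sigma:\beta(t)=1\}^\circ=\theta^\vee(\bar k^*)<G_\theta$. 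Writing $\mu=g\cdot\theta^\vee$ with $g\in G_\theta$ and invoking the Bruhat decomposition of the rank-one group $G_\theta$, I reduce to $g=n_\theta$ and compute, using $s_\theta(\beta+\gamma+\delta)=-\alpha$ and $s_\theta(\beta)=\beta$, that $n_\theta^{-1}\cdot\big(\epsilon_{\beta+\gamma+\delta}(1)\epsilon_\beta(c\sqrt a)\big)=\epsilon_{-\alpha}(\pm 1)\epsilon_\beta(\pm c\sqrt a)\notin P_\lambda$ (as $\langle\alpha,\lambda\rangle=1>0$ puts $U_{-\alpha}$ in $R_u(P_\lambda^-)$). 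This rules out the nontrivial Bruhat cell and establishes uniqueness of $P_\lambda$, hence of $v(\sqrt a)\cdot P_\lambda$ over $H$.

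To finish the $G$-cr-over-$k$ claim I would show $v(\sqrt a)\cdot P_\lambda$ is not $k$-defined: were it $k$-defined, its identity component $v(\sqrt a)\cdot P_\lambda^\circ$ would be a $k$-defined parabolic of $\tilde G$, so $\tilde G(k)$-conjugate to $P_\lambda^\circ$, and the rational Bruhat decomposition \cite[Thm.~21.15]{Borel-AG-book} applied inside $\tilde G$ would force $v(\sqrt a)^{-1}\in R_u(P_\lambda^-)(k)$, contradicting $\sqrt a\notin k$; this is precisely Lemma~\ref{G2nonkdefined} carried out in $G^\circ$. Hence $H$ is $G$-ir over $k$, in particular $G$-cr over $k$, and the $k$-definedness of $H$ itself comes, exactly as in the $G_2$ case, from the non-separable action of $K=\langle\sigma\rangle$ on $R_u(P_\lambda^-)$, which renders $v(\sqrt a)\cdot\sigma$ $k$-rational although $v(\sqrt a)$ is not. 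For the failure of $G$-cr, the same centralizer analysis gives $C_G(\widetilde H)^\circ=U_\theta$: indeed $U_\theta$ centralizes $\widetilde h=\epsilon_{\beta+\gamma+\delta}(1)\epsilon_\beta(c\sqrt a)$, while $\theta^\vee(s)$ does not for $s\neq 1$ because $\langle\beta+\gamma+\delta,\theta^\vee\rangle=1$, and $U_{-\theta}$ does not because $-\theta+(\beta+\gamma+\delta)=-\alpha$ is a root. Thus $C_G(H)^\circ=v(\sqrt a)\cdot U_\theta$ is a nontrivial connected unipotent group, so by \cite[Prop.~3.1]{Borel-Tits-unipotent-invent} it is not $G$-cr, whence $C_G(H)$ is not $G$-cr by Proposition~\ref{Normal}, and the non-connected version of \cite[Cor.~3.17]{Bate-geometric-Inventione} gives that $H$ is not $G$-cr.

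The main obstacle I anticipate is organizational rather than conceptual: I must make certain that Lemma~\ref{G-cr-L-cr}, Proposition~\ref{Normal}, and \cite[Cor.~3.17]{Bate-geometric-Inventione} are all invoked in their $R$-parabolic/$R$-Levi forms valid for the non-connected $G=\tilde G\rtimes\langle\sigma\rangle$, and—more delicately—that the $L_\lambda$-irreducibility of $M$ really exploits the non-identity component through the criterion \emph{$\sigma\in P_\mu\iff\sigma\cdot\mu=\mu$}, which is also what shrinks $C_G(M)^\circ$ to $G_\theta$. The genuine heart of the construction is the $k$-definedness of $v(\sqrt a)\cdot\langle\sigma\rangle$; this rests on verifying that $\langle\sigma\rangle$ acts non-separably on $R_u(P_\lambda^-)$ in characteristic $2$ (equivalently, that the scheme-theoretic fixed points of $\sigma$ there are non-smooth), and that is the step on which I would concentrate the detailed verification.
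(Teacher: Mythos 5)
Your overall architecture is the same as the paper's: pass to $\widetilde H=v(\sqrt a)^{-1}\cdot H$, prove $M=\langle\sigma,G_{\beta+\gamma}\rangle$ is $L_\lambda$-cr and hence $G$-cr, use that to force the cocharacter of any proper $R$-parabolic containing $\widetilde H$ into $C_G(M)^\circ=G_{\alpha+\beta+\gamma+\delta}$, rule out the big Bruhat cell of that rank-one group, deduce that $v(\sqrt a)\cdot P_\lambda$ is the unique proper $R$-parabolic containing $H$ and is not $k$-defined (via the rational Bruhat decomposition), and finally kill $G$-complete reducibility through the unipotent centralizer $v(\sqrt a)\cdot U_{\alpha+\beta+\gamma+\delta}$. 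Nearly all of your computations are correct and agree with the paper's, e.g.\ $s_{\theta}(\beta+\gamma+\delta)=-\alpha$, $s_{\theta}(\beta)=\beta$ for $\theta=\alpha+\beta+\gamma+\delta$, and the observation that $\sigma$ cuts $C_T(M)^\circ$ down to $\theta^\vee(\bar k^*)$.

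The genuine gap is in your direct proof that $M$ is $L_\lambda$-irreducible. You quantify over proper $R$-parabolics $P_\mu(L_\lambda)$ with $\mu\in Y(L_\lambda^\circ)$ and immediately write $\langle\beta+\gamma,\mu\rangle=0$ and ``$\sigma\in P_\mu$ iff $\sigma\cdot\mu=\mu$''. Both statements presuppose $\mu\in Y(T)$ (or at least that $\mu$ lands in a $\sigma$-stable maximal torus); for an arbitrary cocharacter of $L_\lambda^\circ$ the pairing with $\beta+\gamma$ is undefined and the limit criterion does not parse. You cannot simply conjugate $\mu$ into $T$, because conjugation moves $P_\mu$ and destroys the hypothesis $M\leq P_\mu$. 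This reduction is precisely what the paper's intermediate subgroup $M'=\langle\sigma,(\beta+\gamma)^\vee(\bar k^*)\rangle$ is for: the paper first shows $M'$ is $L_\lambda$-cr (it is irreducible in the $R$-Levi $C_{L_\lambda}((\beta+\gamma)(\bar k^*))=\langle\sigma,T\rangle$, then Levi-descent via \cite[Cor.~3.5 and Sec.~6.3]{Bate-geometric-Inventione}), and then, given $P_\mu(L_\lambda)\supseteq M\supseteq M'$, uses $R_u(P_\mu)$-conjugacy of $R$-Levis to replace $\mu$ by a cocharacter whose image centralizes $M'$, hence lies in $\langle\sigma,T\rangle^\circ=T$ and is $\sigma$-fixed; only after this bootstrap does your pairing computation become legitimate. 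The gap is fixable --- either by inserting the $M'$ step, or by a direct geometric argument (the only proper parabolics of $L_\lambda^\circ$ containing $G_{\beta+\gamma}$ are the stabilizers of the unique $G_{\beta+\gamma}$-invariant line and plane in the natural module, and $\sigma$ swaps these two types, so no proper $R$-parabolic of $L_\lambda$ contains both $G_{\beta+\gamma}$ and $\sigma$) --- but as written, the step on which the entire uniqueness argument rests is unjustified. Two smaller substitutions are also needed, as you yourself anticipated: \cite[Prop.~3.1]{Borel-Tits-unipotent-invent} only gives that $C_G(H)^\circ$ is not $G^\circ$-cr, and one needs \cite[Lem.~6.12]{Bate-geometric-Inventione} to pass to the non-connected $G$; and Proposition~\ref{Normal} is stated for connected $G$, so the final normality step should instead cite \cite[Ex.~5.20]{Bate-uniform-TransAMS}, as the paper does.
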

\begin{proof}
We have
\begin{alignat*}{2}
\widetilde H:=v(\sqrt a)^{-1}\cdot H:=& \langle \sigma, v(\sqrt a)^{-1}\cdot G_{\beta+\gamma}, v(\sqrt a)^{-1}\cdot \epsilon_{\beta+\gamma+\delta}(1) \rangle\\
 =&\langle \sigma, G_{\beta+\gamma},  \epsilon_{\beta+\gamma+\delta}(1)\epsilon_{\beta}(\sqrt a )\rangle < P_\lambda. 
\end{alignat*}
Let $M':=\langle \sigma, (\beta+\gamma)^{\vee}(\bar k^*)\rangle <M$. We show that $M'$ is $L_\lambda$-cr. We have
\begin{equation*}
M' < L_{(\beta+\gamma)^{\vee}}(L_\lambda)=C_{L_\lambda}((\beta+\gamma)(\bar k^*))=\langle \sigma, T \rangle.
\end{equation*}
Clearly, $M'$ is $L_{(\beta+\gamma)^{\vee}}(L_\lambda)$-ir. Then, by~\cite[Cor.~3.5 and Sec.~6.3]{Bate-geometric-Inventione}, $M'$ is $L_\lambda$-cr. Now we show that $M$ is $L_\lambda$-cr. Suppose not; then there exists a proper $R$-parabolic subgroup $P_L$ of $L_\lambda$ containing $M$. Let $P_L=P_{\mu}(L_\lambda)$ for some cocharacter $\mu$ of $L_\lambda$. Since $M'$ is $L_\lambda$-cr and $M'$ is contained in $P_\mu(L_\lambda)$, there exists an $R$-Levi subgroup of $P_\mu(L_\lambda)$ containing $M'$. So, without loss, we assume that $\mu(\bar k^*)$ is contained in $C_{L_{\lambda}}(M')=\langle \sigma, T \rangle$. Then $\mu=c\alpha^\vee+d\beta^\vee+d\gamma^\vee+c\delta^\vee$ for some $c,d \in \mathbb{Q}$ since $\sigma$ centralizes $\mu$. Note that $P_\mu(L_\lambda)$ contains $G_{\beta+\gamma}$, so we have $\langle \beta+\gamma, \mu \rangle=0$. Then $\mu=(\alpha+\beta+\gamma+\delta)^\vee$ up to a positive scaler multiple. But then $P_\mu(L_\lambda)=L_\lambda$. This is a contradiction. So, $M$ is $L_\lambda$-cr, and it is $G$-cr by~\cite[Cor.~3.5 and Sec.~6.3]{Bate-geometric-Inventione}.

Let $P_\mu$ be a proper $R$-parabolic subgroup of $G$ containing $\widetilde H$. Since $M$ is $G$-cr, without loss we can assume that $M$ is centralized by $\mu$. It is clear that $G_{\alpha+\beta+\gamma+\delta}<C_G(M)^{\circ}$. Note that $C_G(M)^{\circ}<C_G(\sigma, (\beta+\gamma)^{\vee}(\bar k^*))=G_{\alpha+\beta+\gamma+\delta}$. So, $C_G(M)=G_{\alpha+\beta+\gamma+\delta}$. Thus $\mu(\bar k^*)< G_{\alpha+\beta+\gamma+\delta}$. Set 
\begin{equation*}
\mu:=g\cdot \lambda \textup{ for some }g\in G_{\alpha+\beta+\gamma+\delta}.  
\end{equation*}
Let $n_{\alpha+\beta+\gamma+\delta}:=n_\alpha n_\beta n_\gamma n_\delta n_\gamma n_\beta n_\alpha$. By the Bruhat decomposition, any element $g$ of $G_{\alpha+\beta+\gamma+\delta}$ can be expressed as 
\begin{alignat*}{2}
&(1)\; g=\lambda(s) \epsilon_{\alpha+\beta+\gamma+\delta}(y_1) \textup{ or } \\
&(2)\; g=\epsilon_{\alpha+\beta+\gamma+\delta}(y_1) n_{\alpha+\beta+\gamma+\delta} \lambda(s) \epsilon_{\alpha+\beta+\gamma+\delta}(y_2)  \textup{ for some }s\in \bar k^*, y_1, y_2 \in \bar k.
\end{alignat*}
We rule out the second case. Suppose $g$ is in form (2). Since $\widetilde H\leq P_\mu=P_{g\cdot \lambda}$, it is enough to show that $g^{-1}\cdot \widetilde H\not\subset P_{\lambda}$. Let
\begin{equation*}
h:= \epsilon_{\beta+\gamma+\delta}(1)\epsilon_{\beta}(\sqrt a )
\in \widetilde H.
\end{equation*}
Since $h$ centralizes $U_{\alpha+\beta+\gamma+\delta}$ and  $\epsilon_{\alpha+\beta+\gamma+\delta}(y_2)\lambda(s)$ belongs to $P_\lambda$ for any $y_2\in \bar k, s\in \bar k^*$, without loss, we assume $g=n_{\alpha+\beta+\gamma+\delta}$. Then
\begin{equation*}
g^{-1}\cdot h=\epsilon_{-\alpha}(1)\epsilon_{\beta}(\sqrt a)\not\in P_{\lambda}.
\end{equation*}
Thus $g$ is in form (1) and $g\in P_{\lambda}$, so $P_{\lambda}$ is the unique proper $R$-parabolic subgroup of $G$ containing $\widetilde H$. A similar argument to the $G_2$ and the $E_6$ cases shows that $v(\sqrt a)\cdot P_\lambda$ is not $k$-defined. Thus $H$ is $G$-ir over $k$. 

We find by a direct computation that $C_G(H)^{\circ}=v(\sqrt a)\cdot U_{\alpha+\beta+\gamma+\delta}$, which is unipotent. Then~\cite[Prop.~3.1]{Borel-Tits-unipotent-invent} yields that $C_G(H)^{\circ}$ is not $G^{\circ}$-cr. Thus $C_G(H)^{\circ}$ is not $G$-cr by~\cite[Lem.~6.12]{Bate-geometric-Inventione}. Suppose that $H$ is $G$-cr. Then $C_G(H)$ is $G$-cr by~\cite[Thm.~3.14 and Sec.~6.3]{Bate-geometric-Inventione}. But $C_G(H)^{\circ}$ is a normal subgroup of $C_G(H)^{\circ}$, so $C_G(H)^{\circ}$ is $G$-cr by~\cite[Ex.~5.20]{Bate-uniform-TransAMS}. This is a contradiction.
\end{proof}

\begin{rem}
In the proof of Theorem~\ref{non-connected G-cr over k non-G-cr}, $K$ acts non-separably on $R_u(P_\lambda^{-})$. 
\end{rem}

\section{Related results}
The following was shown in~\cite[Sec.~7]{Bate-separability-TransAMS},~\cite[Sec.~4]{Uchiyama-Separability-JAlgebra},~\cite[Thm.~1.8]{Uchiyama-Classification-pre}. The key to the construction in the proofs was again non-separability.
\begin{thm}
Let $k=k_s$ be a nonperfect field of characteristic $2$. Let $G/k$ be a simple algebraic group of type $E_n$ (or $G_2$). Then there exists a $k$-subgroup $H$ of $G$ such that $H$ is $G$-cr, but not $G$-cr over $k$.
\end{thm}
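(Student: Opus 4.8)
The plan is to run the construction of Section~4 in reverse: instead of moving a non-separable subgroup into a position that is $G$-irreducible over $k$ but fails to be $G$-cr, I would move it into a position that remains $G$-cr over $\bar k$ while acquiring a rationality obstruction that destroys $G$-complete reducibility over $k$. First, by Proposition~\ref{isogeny} I may assume $G$ is simply connected. Fix a $k$-split maximal torus $T$, a proper $k$-parabolic $P$ with $k$-Levi $L\supseteq T$, and the $k$-defined opposite $P^-$. From the classification of non-separable subgroups in~\cite{Uchiyama-Classification-pre} for $G=E_6,E_7,E_8$ (and from~\cite[Sec.~7]{Bate-separability-TransAMS} for $G=G_2$) I would select a finite $k$-subgroup $K\le L$ that acts non-separably on $R_u(P^-)$, and enlarge it to $M:=\langle K, M'\rangle$ with a suitable torus/root-group part $M'\le L$ centralized by the element $v(\sqrt a)$ below, chosen so that $M$ is $L$-irreducible, hence $L$-cr by~\cite[Prop.~2.8]{Bate-commuting-Crelle} and~\cite[Cor.~3.5]{Bate-geometric-Inventione}, and therefore $G$-cr by Lemma~\ref{G-cr-L-cr}.

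Next I would pick a non-rational element $v(\sqrt a)\in R_u(P^-)(\bar k)\setminus R_u(P^-)(k)$, assembled from the root groups whose roots lie in the $K$-orbits witnessing non-separability, and set $H:=v(\sqrt a)\cdot M$. The first key point is that $H$ is \emph{$k$-defined}: although $v(\sqrt a)$ is irrational, the non-separability of the $K$-action supplies extra fixed directions in $\mathfrak{c}_{\textup{Lie}\,R_u(P^-)}(M)$ beyond $\textup{Lie}\,C_{R_u(P^-)}(M)$, and these absorb the $\sqrt a$ exactly as in~\cite[Sec.~7]{Bate-separability-TransAMS} and in the proof of Proposition~\ref{G2ir}. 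Granting this, $H$ is $G$-cr, since it is $G(\bar k)$-conjugate to the $G$-cr subgroup $M$ and $G$-complete reducibility over $\bar k$ is preserved under $G(\bar k)$-conjugacy.

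It then remains to show $H$ is \emph{not} $G$-cr over $k$. Because $v(\sqrt a)\in R_u(P^-)$ and $M\le L\le P^-$, the proper $k$-parabolic $P^-$ contains $H$, and I would use $P^-$ as the witnessing $k$-parabolic. The $k$-Levi subgroups of $P^-$ are exactly the $R_u(P^-)(k)$-conjugates $u\cdot L$, and a short commutator computation (using $L\cap R_u(P^-)=\{1\}$ and $[R_u(P^-),L]\le R_u(P^-)$) shows that $H=v(\sqrt a)\cdot M$ lies in $u\cdot L$ if and only if $u^{-1}v(\sqrt a)\in C_{R_u(P^-)}(M)$; equivalently, $H$ lies in a $k$-Levi of $P^-$ if and only if $v(\sqrt a)\in R_u(P^-)(k)\cdot C_{R_u(P^-)}(M)$. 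The construction is designed so that this fails.

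I expect this last step to be the main obstacle, since it is precisely a rationality statement packaging the failure of separability. The element $v(\sqrt a)$ represents a $\sqrt a$-direction tangent to the fixed locus of $K$ to first order — it lies in $\mathfrak{c}_{\textup{Lie}\,R_u(P^-)}(M)$, which is what keeps $H$ $k$-defined — but it does not arise from the honest centralizer $C_{R_u(P^-)}(M)$, whose smaller dimension is exactly the non-separability. To make this precise I would compute $C_{R_u(P^-)}(M)$ explicitly from the commutation relations~\cite[Lem.~32.5, Prop.~33.3]{Humphreys-book1}, describe the coset $v(\sqrt a)\cdot C_{R_u(P^-)}(M)$, and verify via the rational Bruhat decomposition~\cite[Thm.~21.15]{Borel-AG-book}, as in Lemma~\ref{G2nonkdefined}, that it contains no $k$-point. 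Hence $H$ lies in no $k$-Levi of $P^-$, so $H$ is $G$-cr but not $G$-cr over $k$, which handles each listed type.
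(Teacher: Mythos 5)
Your proposal is correct and follows essentially the paper's own route: the paper does not reprove this theorem but attributes it to \cite[Sec.~7]{Bate-separability-TransAMS}, \cite[Sec.~4]{Uchiyama-Separability-JAlgebra}, and \cite[Thm.~1.8]{Uchiyama-Classification-pre}, whose construction is exactly the one you describe --- conjugate an $L$-irreducible (hence $G$-cr) subgroup $M$ by a non-rational $v(\sqrt a)\in R_u(P^-)$ whose irrationality is absorbed by the non-separable action so that $H=v(\sqrt a)\cdot M$ is $k$-defined and $G$-cr, then rule out every $k$-Levi of the $k$-parabolic $P^-$. Your key reduction, that $H\le u\cdot L$ for $u\in R_u(P^-)(k)$ if and only if $u^{-1}v(\sqrt a)\in C_{R_u(P^-)}(M)$, so that failure of $G$-complete reducibility over $k$ amounts to the coset $v(\sqrt a)\,C_{R_u(P^-)}(M)$ having no $k$-point, is precisely the mechanism the paper itself spells out in the non-connected analogue (the proof of Theorem~\ref{non-connected G-cr non-G-cr over k}).
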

Note that this is the opposite direction of Theorem~\ref{G-cr over k non G-cr}. We now show the following. The point is that if we allow $G$ to be non-connected, computations become much simpler than the connected cases. 

\begin{thm}\label{non-connected G-cr non-G-cr over k}
Let $k=k_s$ be a nonperfect field of characteristic $2$. Let $\tilde G/k$ be a simple algebraic group of type $A_2$. Let $G:=\tilde G\rtimes \langle \sigma \rangle$ where $\sigma$ is the non-trivial graph automorphism of $\tilde G$. Then there exists a $k$-subgroup $H$ of $G$ such that $H$ is $G$-cr but not $G$-cr over $k$.
\end{thm}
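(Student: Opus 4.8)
The plan is to run the construction of the $A_4$ case (Theorem~\ref{non-connected G-cr over k non-G-cr}) in reverse, exploiting the elementary fact that $R_u(P)$ normalizes a parabolic $P$ but permutes its Levi subgroups. In the $A_4$ argument one conjugates by an element of $R_u(P_\lambda^{-})$, which moves $P_\lambda$ to a \emph{different}, non-$k$-defined parabolic and thereby produces a subgroup that is $G$-ir over $k$ (hence $G$-cr over $k$); here I would instead conjugate by an element of $R_u(P_\lambda)$, which fixes $P_\lambda$ but carries its $k$-defined Levi to a non-$k$-defined one, producing a subgroup that sits in a $k$-parabolic with no $k$-Levi. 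Concretely, let $\alpha,\beta$ be the simple roots of $\tilde G=A_2$, so $\sigma$ swaps $\alpha$ and $\beta$ and fixes $\alpha+\beta$. Take the $\sigma$-fixed cocharacter $\lambda=(\alpha+\beta)^{\vee}$; then $P_\lambda$ is a $k$-defined $R$-parabolic with $R_u(P_\lambda)=\langle U_\alpha,U_\beta,U_{\alpha+\beta}\rangle$ and $R$-Levi $L_\lambda=\langle T,\sigma\rangle$ (note $L_\lambda^{\circ}=T$, since no root is orthogonal to $\lambda$). Set $v(\sqrt a):=\epsilon_\alpha(\sqrt a)\epsilon_\beta(\sqrt a)\in R_u(P_\lambda)$ and
\[
H:=v(\sqrt a)\cdot\langle\sigma\rangle=\langle\, v(\sqrt a)\,\sigma\,v(\sqrt a)^{-1}\,\rangle .
\]

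The first step is the commutation-relation computation, exactly as in the $G_2$, $E_6$ and $A_4$ cases. Because $\sigma$ interchanges $\epsilon_\alpha(\sqrt a)$ and $\epsilon_\beta(\sqrt a)$, the two factors of $\sqrt a$ collide in the $U_{\alpha+\beta}$-term and multiply to $a\in k$, giving $v(\sqrt a)\,\sigma\,v(\sqrt a)^{-1}=\epsilon_{\alpha+\beta}(a)\,\sigma$ (up to a sign), so that $H=\langle\epsilon_{\alpha+\beta}(a)\sigma\rangle$ is a $k$-defined subgroup of order $2$ despite $v(\sqrt a)$ not being a $k$-point. This is the step where the non-separable action of $\langle\sigma\rangle$ on $R_u(P_\lambda)$ is essential.

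Next I would verify the two required properties. For $G$-complete reducibility over $\bar k$: since $H$ is $\bar k$-conjugate to $\langle\sigma\rangle$ and complete reducibility is a conjugacy invariant, it is enough to show $\langle\sigma\rangle$ is $G$-cr, and I would prove the stronger statement that $\langle\sigma\rangle$ is $L_\lambda$-ir. Indeed every cocharacter of $L_\lambda$ lies in $Y_{\bar k}(T)$, and for $\mu\in Y_{\bar k}(T)$ not fixed by $\sigma$ one gets $P_\mu(L_\lambda)=L_\mu(L_\lambda)=T\not\ni\sigma$; hence $\langle\sigma\rangle$ lies in no proper $R$-parabolic of $L_\lambda$, and \cite[Cor.~3.5 and Sec.~6.3]{Bate-geometric-Inventione} gives that $\langle\sigma\rangle$, and therefore $H$, is $G$-cr. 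For the failure over $k$: $H\subseteq P_\lambda$, and every $k$-defined $R$-Levi of $P_\lambda$ has the form $u\,L_\lambda\,u^{-1}$ with $u\in R_u(P_\lambda)(k)$. Using the $P_\lambda=R_u(P_\lambda)\rtimes L_\lambda$ decomposition one checks $H\subseteq u L_\lambda u^{-1}$ iff $u^{-1}v(\sqrt a)\in C_{R_u(P_\lambda)}(\sigma)$. The decisive input is the group centralizer $C_{R_u(P_\lambda)}(\sigma)=U_{\alpha+\beta}$ --- strictly smaller than its $2$-dimensional Lie-algebra counterpart $\langle e_\alpha+e_\beta,\,e_{\alpha+\beta}\rangle$, which is exactly the non-separability. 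Since every element of $v(\sqrt a)\,U_{\alpha+\beta}$ has $U_\alpha$-coordinate $\sqrt a\notin k$, no such $u$ is a $k$-point; thus $H$ lies in no $k$-Levi of the $k$-parabolic $P_\lambda$, so $H$ is not $G$-cr over $k$.

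The main obstacle I expect is not conceptual but a matter of making the characteristic-$2$ bookkeeping airtight: fixing a concrete realization of the pinned graph automorphism $\sigma$ so that it genuinely acts on $R_u(P_\lambda)$ by swapping $U_\alpha$ and $U_\beta$ with the claimed effect on $U_{\alpha+\beta}$, and pinning down the structure constants and signs in both $v(\sqrt a)\sigma v(\sqrt a)^{-1}=\epsilon_{\alpha+\beta}(\pm a)\sigma$ and $C_{R_u(P_\lambda)}(\sigma)=U_{\alpha+\beta}$. Once these are nailed down, the argument is a strictly simpler version of the $A_4$ computation, as the absence of any $SL_2$-factor in $L_\lambda$ removes the need for the auxiliary root-group generator used there.
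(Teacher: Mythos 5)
Your proposal is correct and follows essentially the same route as the paper's own proof: the identical construction $\lambda=(\alpha+\beta)^{\vee}$, $v(\sqrt a)=\epsilon_\alpha(\sqrt a)\epsilon_\beta(\sqrt a)\in R_u(P_\lambda)$, $H=v(\sqrt a)\cdot\langle\sigma\rangle=\langle\sigma\epsilon_{\alpha+\beta}(a)\rangle$, with $G$-complete reducibility deduced from $\langle\sigma\rangle$ being $L_\lambda$-ir, and failure of $G$-cr over $k$ deduced from the fact that any $k$-defined $R$-Levi of $P_\lambda$ is $u\cdot L_\lambda$ with $u\in R_u(P_\lambda)(k)$, forcing $u\in v(\sqrt a)\,C_{R_u(P_\lambda)}(\sigma)=v(\sqrt a)\,U_{\alpha+\beta}$, which contains no $k$-point. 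The only cosmetic difference is that you verify $H\subseteq u\cdot L_\lambda$ via the semidirect-product decomposition of $P_\lambda$ where the paper uses the projection $\pi_\lambda:P_\lambda\rightarrow L_\lambda$; these are the same computation.
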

\begin{proof}
Let $G$ be as in the hypotheses. Fix a maximal $k$-split torus $T$ of $G$ and a $k$-Borel subgroup containing $T$. Let $\{\alpha, \beta\}$ be the set of simple roots of $G$ corresponding to $T$ and $B$. Let $\lambda:=(\alpha+\beta)^{\vee}$. Then $P_\lambda=\langle B, \sigma \rangle$ is a $k$-defined $R$-parabolic subgroup of $G$ and  
$L_\lambda=\langle T, \sigma \rangle= C_{G}((\alpha+\beta)^{\vee}(\bar k^{*}))$ is a $k$-defined $R$-Levi subgroup of $P_\lambda$. Let $K:=\langle \sigma \rangle$ and $v(\sqrt a):=\epsilon_\alpha(\sqrt a)\epsilon_\beta(\sqrt a)$. Define
\begin{equation*}
H:=v(\sqrt a)\cdot K=\langle \sigma\epsilon_{\alpha+\beta}(a)\rangle.
\end{equation*}

First, we prove that $H$ is $G$-cr. It is enough to show that $K$ is $G$-cr since $H$ is $G$-conjugate to $K$. It is clear that $K$ is contained in $L_\lambda$ and $K$ is $L_\lambda$-ir, so $K$ is $G$-cr by \cite[Sec.~6.3]{Bate-geometric-Inventione}.

Now we show that $H$ is not $G$-cr over $k$. Suppose the contrary. It is clear that $P_\lambda$ contains $H$. Then there exists a $k$-defined $R$-Levi subgroup $L$ of $P_\lambda$ containing $H$. By~\cite[Lem.~2.5(\rmnum{3})]{Bate-uniform-TransAMS}, there exists $u\in R_u(P_\lambda)(k)$ such that $L=u\cdot L_\lambda$. Then $u^{-1}\cdot H\leq L_\lambda$. It is obvious that $v(\sqrt a)^{-1}\cdot H \leq L_\lambda$. Let $\pi_\lambda: P_\lambda \rightarrow L_\lambda$ be the canonical projection. For any $s\in H$, we have
\begin{equation*}
u^{-1}\cdot s = \pi_\lambda(u^{-1}\cdot s)=\pi_\lambda(s)=\pi_\lambda(v(\sqrt a)^{-1}\cdot s)=v(\sqrt a)^{-1}\cdot s.
\end{equation*}
So, $u=v(\sqrt a)z $ for some $z\in C_{R_u(P_\lambda)}(K)(k)$.
We compute
$
C_{R_u(P_\lambda)}(K)=U_{\alpha+\beta}.
$
So, 
$
u=v(\sqrt a)\epsilon_{\alpha+\beta}(x)
 =\epsilon_\alpha(\sqrt a)\epsilon_\beta(\sqrt a)\epsilon_{\alpha+\beta}(x) \textup{ for some } x \in k.
$
This is a contradiction since $u$ is a $k$-point. Thus $H$ is not $G$-cr over $k$.
\end{proof}

To finish the paper, we consider another application of non-separability for non-connected $G$ with a slightly different flavor. In~\cite[Sec.~7]{Bate-separability-TransAMS},~\cite[Sec.~3]{Uchiyama-Separability-JAlgebra},~\cite[Thm.~1.2]{Uchiyama-Classification-pre}, it was shown that
\begin{thm}
Let $k$ be an algebraically closed field of characteristic $2$. Let $G/k$ be a simple algebraic group of type $E_n$ (or $G_2$). Then there exists a pair of reductive subgroups $H<M$ of $G$ such that $H$ is $G$-cr but not $M$-cr.
\end{thm}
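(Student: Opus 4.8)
The plan is to build an explicit pair $H<M$ by the non-separability machinery of Section~4, exploiting that here $k=\bar k$: the role played there by the non-rational conjugator $v(\sqrt a)$ is now played by a genuine $\bar k$-point that conjugates a subgroup of $M$ out of $M$ and, by a non-separable action, back into $M$. I would give the details for $G$ of type $G_2$ and note that the $E_n$ cases have the same shape, fed by the non-separable subgroups of Levi subgroups tabulated in~\cite[Sec.~3,4,5]{Uchiyama-Classification-pre}. Fix $T$, a Borel $B\supseteq T$, and simple roots as in Section~4.

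First I would fix a reductive subsystem subgroup $M<G$ and a cocharacter $\mu\in Y(M)\subseteq Y(G)$, so that $P_\mu(M)=P_\mu(G)\cap M$, $L_\mu(M)=L_\mu(G)\cap M$, and $R_u(P_\mu(M)^{-})=R_u(P_\mu(G)^{-})\cap M$. Inside $L_\mu(M)$ I would place a finite subgroup $K$ acting non-separably on $R_u(P_\mu(M)^{-})$, drawn from~\cite[Sec.~7]{Bate-separability-TransAMS} or~\cite{Uchiyama-Classification-pre}, so that $\dim C_{R_u(P_\mu(M)^{-})}(K)<\dim\mathfrak{c}_{\textup{Lie}\,R_u(P_\mu(M)^{-})}(K)$, and enlarge it to a subgroup $\widetilde H=\langle K,\ldots\rangle\le L_\mu(M)$. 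I would check that $\widetilde H$ is $G$-cr by reducing along the Levi $L_\mu(G)$ with Lemma~\ref{G-cr-L-cr} and verifying irreducibility from the root data. The non-separability of $K$ then furnishes a unipotent element $g\in R_u(P_\mu(G)^{-})\setminus M$ for which $H:=g\cdot\widetilde H\cdot g^{-1}$ lies in $M$; this is the exact analogue, with $\bar k$-coordinates in place of $\sqrt a$, of the statement that $v(\sqrt a)\cdot K$ is $k$-defined in Section~4, and it is forced by the same dimension defect.

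Since $H$ is $G$-conjugate to the $G$-cr subgroup $\widetilde H$, it is $G$-cr. To see that $H$ is not $M$-cr I would compute $C_M(H)=g\,C_G(\widetilde H)\,g^{-1}\cap M$ and show that its identity component is a nontrivial unipotent group: $C_G(\widetilde H)$ is reductive by~\cite[Prop.~3.12]{Bate-geometric-Inventione} (valid over $\bar k$), but its conjugate by $g\notin M$ meets $M$ only unipotently, which is precisely the non-separable phenomenon. Then $C_M(H)^{\circ}$ is not $M$-cr by~\cite[Prop.~3.1]{Borel-Tits-unipotent-invent}, hence $C_M(H)$ is not $M$-cr by Proposition~\ref{Normal} applied in $M$ (as $C_M(H)^{\circ}\trianglelefteq C_M(H)$), and therefore $H$ is not $M$-cr by~\cite[Cor.~3.17]{Bate-geometric-Inventione}, exactly as in Propositions~\ref{G2nonGcr} and~\ref{E6nonG-cr}. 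I would also record that $H$ is reductive, which is immediate when $K$ and $\widetilde H$ are taken finite, as they are in~\cite{Uchiyama-Classification-pre}.

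The hard part is the non-separability itself: producing the conjugator $g$ outside $M$ with $g\widetilde Hg^{-1}\le M$ and confirming that $C_M(H)^{\circ}$ is genuinely unipotent rather than reductive both hinge on the defect $\dim C_{R_u(P_\mu(M)^{-})}(K)<\dim\mathfrak{c}_{\textup{Lie}\,R_u(P_\mu(M)^{-})}(K)$, whose verification---together with the root-group commutation relations needed to exhibit $g$ and to read off $C_M(H)$ in $G_2$ (and in $E_n$)---is the computational core. The remaining ingredients, namely the $G$-cr reduction for $\widetilde H$ and the explicit form of the centralizer, follow the template already established in Section~4.
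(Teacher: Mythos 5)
First, a framing remark: the paper does not prove this theorem at all --- it quotes it from~\cite[Sec.~7]{Bate-separability-TransAMS} (the $G_2$ case),~\cite[Sec.~3]{Uchiyama-Separability-JAlgebra} and~\cite[Thm.~1.2]{Uchiyama-Classification-pre} (the $E_n$ cases), as a foil for Theorem~\ref{G-crM-cr}; the $G_2$ construction behind it is the one reworked in Section~4. Your proposal reconstructs the mechanism of those sources essentially correctly in outline: take a finite subgroup $\widetilde H$ of a Levi that is $G$-cr (via Lemma~\ref{G-cr-L-cr}), conjugate by a unipotent $g\in R_u(P_\mu(G)^-)\setminus M$ which, by the characteristic-$2$ commutation relations, carries $\widetilde H$ back inside the reductive subgroup $M$; conclude $G$-cr by conjugacy (valid since $k=\bar k$); and deduce non-$M$-cr from a nontrivial unipotent connected centralizer via~\cite[Prop.~3.1]{Borel-Tits-unipotent-invent}, Proposition~\ref{Normal} applied inside $M$, and~\cite[Cor.~3.17]{Bate-geometric-Inventione} --- the same chain as in Propositions~\ref{G2nonGcr} and~\ref{E6nonG-cr}. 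That skeleton is sound, and taking $\widetilde H$ finite does make $H$ and $M$ reductive.

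There is, however, a genuine gap in the hypothesis your construction is supposed to run on: you require (twice) that $K$ act non-separably on $R_u(P_\mu(M)^-)$, with defect $\dim C_{R_u(P_\mu(M)^-)}(K)<\dim\mathfrak{c}_{\textup{Lie}\,R_u(P_\mu(M)^-)}(K)$. In the very examples you cite this is false. In the $G_2$ case, with $\widetilde H=\langle n_\alpha,t\rangle$, $\mu=(3\alpha+2\beta)^{\vee}$ and the natural choice $M=G_\alpha G_{3\alpha+2\beta}T$ (the reductive overgroup into which $v(\sqrt a)\cdot\langle n_\alpha,t\rangle=\langle n_\alpha\epsilon_{-3\alpha-2\beta}(a),t\rangle$ lands), one has $R_u(P_\mu(M)^-)=U_{-3\alpha-2\beta}$, which $K$ centralizes; that action is trivially separable. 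The defect that drives everything lives in the full five-dimensional $R_u(P_\mu(G)^-)$, in directions transverse to $\textup{Lie}\,M$: the curve $v(x)=\epsilon_{-\beta}(x)\epsilon_{-3\alpha-\beta}(x)$ has tangent vector in $\mathfrak{c}_{\textup{Lie}\,R_u(P_\mu(G)^-)}(K)\setminus\textup{Lie}\,C_{R_u(P_\mu(G)^-)}(K)$ (this is exactly the non-separability the paper records after Proposition~\ref{G2ir}), and the char-$2$ relations give $v(x)\cdot n_\alpha=n_\alpha\epsilon_{-3\alpha-2\beta}(x^2)\in M$. The mislocation is not cosmetic: a defect inside $R_u(P_\mu(M)^-)$ could only produce conjugators lying in $M$, and an $M$-conjugate of $\widetilde H$ is $M$-cr if and only if $\widetilde H$ is, so that hypothesis can never separate ``$G$-cr'' from ``not $M$-cr'' --- the conjugator must come from outside $M$, which is precisely what a defect transverse to $M$ inside $R_u(P_\mu(G)^-)$ provides. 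Replace your hypothesis accordingly and the proposal matches the cited constructions; note also that for the non-$M$-cr step the paper's proof of Theorem~\ref{G-crM-cr} shows a shorter route than your centralizer computation: $H\le P_\mu(M)$, and since $R_u(P_\mu(M))$ centralizes the deformed generator, $H$ cannot be $R_u(P_\mu(M))$-conjugated into any Levi of $P_\mu(M)$.
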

The following is much easier to prove than the connected cases.
\begin{thm}\label{G-crM-cr}
Let $k$ be an algebraically closed field of characteristic $2$. Let $\tilde G/k$ be a simple algebraic group of type $A_2$. Let $G:=\tilde G\rtimes \langle \sigma \rangle$ where $\sigma$ is the non-trivial graph automorphism of $\tilde G$. Then there exists a pair of reductive subgroups $H<M$ of $G$ such that $H$ is $G$-cr but not $M$-cr.
\end{thm}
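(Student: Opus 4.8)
The plan is to produce the example by mimicking the non-connected constructions of Sections~4 and~5, exploiting that in characteristic $2$ the graph automorphism $\sigma$ of $A_2$ acts non-separably on the unipotent radical of the relevant $R$-parabolic. Fix a maximal torus $T$ and a Borel $B$ of $\tilde G$ with simple roots $\{\alpha,\beta\}$, so that $\sigma$ swaps $\alpha\leftrightarrow\beta$ and fixes the highest root $\alpha+\beta$. Set $\lambda:=(\alpha+\beta)^{\vee}$; then, exactly as in the proof of Theorem~\ref{non-connected G-cr non-G-cr over k}, $P_\lambda=\langle B,\sigma\rangle$ and $L_\lambda=\langle T,\sigma\rangle=C_G((\alpha+\beta)^{\vee}(\bar k^*))$ with $L_\lambda^{\circ}=T$. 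Let $K:=\langle\sigma\rangle$, let $v:=\epsilon_\alpha(1)\epsilon_\beta(1)\in R_u(P_\lambda)$, and define
\begin{equation*}
H:=v\cdot K,\qquad M:=\langle\sigma,G_{\alpha+\beta}\rangle.
\end{equation*}
A short computation with the Chevalley commutator relations gives $v\cdot\sigma\cdot v^{-1}=\sigma\epsilon_{\alpha+\beta}(1)$, so $H=\langle\sigma\epsilon_{\alpha+\beta}(1)\rangle$ is a group of order $2$; thus both $H$ and $M$ are reductive, and $H<M$.

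First I would show $H$ is $G$-cr, proceeding exactly as in Theorem~\ref{non-connected G-cr non-G-cr over k}. Since $L_\lambda^{\circ}=T$ is a torus, $L_\lambda$ has no proper $R$-parabolic subgroup containing $\sigma$: writing $\mu=c\alpha^\vee+d\beta^\vee\in Y(T)$, one checks that $\sigma\in P_\mu(L_\lambda)$ forces $c=d$, i.e. $\mu$ to be $\sigma$-fixed, whence $\sigma\in L_\mu(L_\lambda)$. Hence $K$ is $L_\lambda$-ir and therefore $G$-cr by~\cite[Cor.~3.5 and Sec.~6.3]{Bate-geometric-Inventione}. As $H=v\cdot K$ is $G(\bar k)$-conjugate to $K$ and $k=\bar k$, $H$ is $G$-cr as well.

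Next I would show $H$ is not $M$-cr. In characteristic $2$ the signs in $\sigma(\epsilon_{\pm(\alpha+\beta)}(z))=\epsilon_{\pm(\alpha+\beta)}(\pm z)$ disappear and $\sigma$ fixes $(\alpha+\beta)^{\vee}$, so $\sigma$ centralizes $G_{\alpha+\beta}$ and $M\cong\langle\sigma\rangle\times G_{\alpha+\beta}$ is reductive. Writing the generator of $H$ as the pair $(\sigma,u)$ with $u:=\epsilon_{\alpha+\beta}(1)$ a nontrivial unipotent element of $G_{\alpha+\beta}$, the $R$-parabolic $P_M:=\langle\sigma\rangle\times P_{(\alpha+\beta)^\vee}(G_{\alpha+\beta})$ of $M$ contains $H$, while its $R$-Levi subgroups are the $R_u(P_M)(\bar k)$-conjugates of $\langle\sigma\rangle\times T_1$, where $T_1:=(\alpha+\beta)^{\vee}(\bar k^*)$ and $R_u(P_M)=\{1\}\times U_{\alpha+\beta}$. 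Because $U_{\alpha+\beta}$ is abelian, conjugation by $R_u(P_M)$ fixes $(\sigma,u)$, and $(\sigma,u)$ lies in no conjugate of $\langle\sigma\rangle\times T_1$ since $u$ is a nontrivial unipotent element. Hence no $R$-Levi of $P_M$ contains $H$, so $H$ is not $M$-cr.

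I expect the genuinely delicate step to be the non-$M$-cr verification, but it is mild here precisely because $G$ is non-connected: everything reduces to the rank-one group $G_{\alpha+\beta}$ together with the finite factor $\langle\sigma\rangle$, where the failure is visibly forced by the abelianness of $U_{\alpha+\beta}$, which is the smooth part of the non-separable centralizer $C_{R_u(P_\lambda)}(\sigma)=U_{\alpha+\beta}$ (whereas $\mathfrak{c}_{\mathrm{Lie}\,R_u(P_\lambda)}(\sigma)$ is strictly larger in characteristic $2$). The one computational point needing care is the identity $v\cdot\sigma\cdot v^{-1}=\sigma\epsilon_{\alpha+\beta}(1)$ together with the characteristic-$2$ claim that $\sigma$ centralizes all of $G_{\alpha+\beta}$; both follow from the commutator relations and the explicit $\sigma$-action on root groups, exactly as in Theorems~\ref{non-connected G-cr over k non-G-cr} and~\ref{non-connected G-cr non-G-cr over k}. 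A closing remark would record that, as there, $K$ acts non-separably on $R_u(P_\lambda^{-})$, which is the structural reason the $G(\bar k)$-conjugacy class of the $M$-cr subgroup $K$ also meets $M$ in the non-$M$-cr subgroup $H$.
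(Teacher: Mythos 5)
Your proposal is correct and takes essentially the same route as the paper: the identical example $H=\langle\sigma\epsilon_{\alpha+\beta}(\cdot)\rangle < M=\langle\sigma,G_{\alpha+\beta}\rangle$, the identical $G$-cr argument ($\langle\sigma\rangle$ is $L_\lambda$-ir, hence $G$-cr, and $H$ is $G$-conjugate to it), and the identical key fact that in characteristic $2$ the abelian group $R_u(P_\lambda(M))=U_{\alpha+\beta}$ is centralized by $\sigma$. The only (cosmetic) difference is in the final packaging: you verify directly that no $R_u(P_M)$-conjugate of the $R$-Levi $\langle\sigma\rangle\times(\alpha+\beta)^{\vee}(\bar k^*)$ can contain the generator $\sigma\epsilon_{\alpha+\beta}(1)$, whereas the paper invokes the criterion that $H$ is $M$-cr if and only if $H$ is $R_u(P_\lambda(M))$-conjugate to its projection $\pi_\lambda(H)=\langle\sigma\rangle$ (via~\cite[Lem.~2.17, Thm.~3.1]{Bate-geometric-Inventione} and~\cite[Thm.~3.3]{Bate-uniform-TransAMS}), both reductions resting on the same one-line computation.
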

\begin{proof}
Use the same $H=\langle \sigma\epsilon_{\alpha+\beta}(a)\rangle$ as in the proof of Theorem~\ref{non-connected G-cr non-G-cr over k}. Then $H$ is $G$-cr. We show that $H$ is not $M$-cr. Set $M:=\langle \sigma, G_{\alpha+\beta}\rangle$. Let $\lambda:=(\alpha+\beta)^{\vee}$. Then the image of $\sigma\epsilon_{\alpha+\beta}(a)$ under the canonical projection $\pi_\lambda:P_\lambda\rightarrow L_\lambda$ is $\sigma$. By~\cite[Lem.~2.17, Thm. 3.1]{Bate-geometric-Inventione} and~\cite[Thm.~3.3]{Bate-uniform-TransAMS}, it is enough to show that $\sigma$ is not $R_u((P_\lambda)(M))$-conjugate to $\sigma\epsilon_{\alpha+\beta}(a)$.
This is easy since $R_u((P_\lambda)(M))=U_{\alpha+\beta}$ which is centralized by $\sigma$. We are done.
\end{proof}

\section*{Acknowledgements}
This research was supported by Marsden Grant UOA1021. The author would like to thank Benjamin Martin, Philippe Gille, and Brian Conrad for helpful discussions. 
\clearpage
\section*{Appendix}

\begin{table}[!h]
\begin{center}
\scalebox{0.7}{
\begin{tabular}{cccc}
\rootsE{1}{1}{0}{0}{0}{0}{0}&\rootsE{2}{1}{0}{0}{1}{0}{0}&\rootsE{3}{1}{0}{1}{1}{0}{0}&\rootsE{4}{1}{0}{0}{1}{1}{0}\\
&&&\\
\rootsE{5}{1}{1}{1}{1}{0}{0}&\rootsE{6}{1}{0}{1}{1}{1}{0}&\rootsE{7}{1}{0}{0}{1}{1}{1}&\rootsE{8}{1}{1}{1}{1}{1}{0}\\
&&&\\
\rootsE{9}{1}{0}{1}{1}{1}{1}&\rootsE{10}{1}{0}{1}{2}{1}{0}&\rootsE{11}{1}{1}{1}{1}{1}{1}&\rootsE{12}{1}{1}{1}{2}{1}{0}\\
&&&\\
\rootsE{13}{1}{0}{1}{2}{1}{1}&\rootsE{14}{1}{1}{1}{2}{1}{1}&\rootsE{15}{1}{1}{2}{2}{1}{0}&\rootsE{16}{1}{0}{1}{2}{2}{1}\\
&&&\\
\rootsE{17}{1}{1}{2}{2}{1}{1}&\rootsE{18}{1}{1}{1}{2}{2}{1}&\rootsE{19}{1}{1}{2}{2}{2}{1}&\rootsE{20}{1}{1}{2}{3}{2}{1}\\
&&&\\
\rootsE{21}{2}{1}{2}{3}{2}{1}&\rootsE{22}{0}{1}{0}{0}{0}{0}&\rootsE{23}{0}{0}{1}{0}{0}{0}&\rootsE{24}{0}{0}{0}{1}{0}{0}\\
&&&\\
\rootsE{25}{0}{0}{0}{0}{1}{0}&\rootsE{26}{0}{0}{0}{0}{0}{1}&\rootsE{27}{0}{1}{1}{0}{0}{0}&\rootsE{28}{0}{0}{1}{1}{0}{0}\\
&&&\\
\rootsE{29}{0}{0}{0}{1}{1}{0}&\rootsE{30}{0}{0}{0}{0}{1}{1}&\rootsE{31}{0}{1}{1}{1}{0}{0}&\rootsE{32}{0}{0}{1}{1}{1}{0}\\
&&&\\
\rootsE{33}{0}{0}{0}{1}{1}{1}&\rootsE{34}{0}{1}{1}{1}{1}{0}&\rootsE{35}{0}{0}{1}{1}{1}{1}&\rootsE{36}{0}{1}{1}{1}{1}{1}\\
\end{tabular}
}
\end{center}
\caption{The set of positive roots of $E_6$}\label{E6}
\end{table}

\bibliography{mybib}

\end{document}